\documentclass[reqno]{amsart}
\usepackage[dvipsnames]{xcolor}
\usepackage[utf8]{inputenc}
\usepackage[shortlabels]{enumitem}
\usepackage{palatino,amsmath,amsfonts,amsthm,amssymb,mathrsfs,fullpage,soul,cite,tikz,float}
\usetikzlibrary{cd}
\usepackage{hyperref}
\hypersetup{colorlinks,citecolor=red,filecolor=black,linkcolor=blue,urlcolor=blue}
\usetikzlibrary{patterns}

\usepackage{accents}


\DeclareMathOperator{\Comp}{Comp} 
\DeclareMathOperator{\Con}{Con} 

\DeclareMathOperator{\Supp}{Supp} 
\DeclareMathOperator{\Min}{Min}
\DeclareMathOperator{\Max}{Max}
\DeclareMathOperator{\FInf}{FI}
\DeclareMathOperator{\sk}{sk} 
\DeclareMathOperator{\Fund}{Fund}

\newcommand{\isom}{\cong}
\newcommand{\x}{\times}
\newcommand{\defterm}[1]{\textbf{#1}}

\newcommand{\Set}{\mathsf{Set}}
\newcommand{\Setx}{\mathsf{Set}^\times}
\renewcommand{\Vec}{\mathsf{Vec}}

\newcommand{\rmG}{\mathrm{G}}		
\newcommand{\rmH}{\mathrm{H}}		\newcommand{\bfH}{\mathbf{H}}
\newcommand{\rmP}{\mathrm{P}}		\newcommand{\bfP}{\mathbf{P}}
\newcommand{\calH}{\mathcal{H}}
\newcommand{\calI}{\mathcal{I}}

\newcommand{\SFam}{\mathrm{SF}}		\newcommand{\SFAM}{\mathbf{SF}}
\newcommand{\Acc}{\mathrm{Acc}}			
\newcommand{\Int}{\mathrm{Int}}			
\newcommand{\Union}{\mathrm{Union}}		
\newcommand{\Simp}{\mathrm{Simp}}		\newcommand{\SIMP}{\mathbf{Simp}}
\newcommand{\Mat}{\mathrm{Mat}}			
\newcommand{\Cf}{\mathrm{CF}}			\newcommand{\CF}{\mathbf{CF}}
\newcommand{\CFA}{\mathcal{CF}} 
\newcommand{\Loi}{\mathrm{LOI}}			\newcommand{\LOI}{\mathbf{LOI}}
\newcommand{\NTop}{\mathrm{NTop}}		
\newcommand{\AMat}{\mathrm{AMat}}		
\newcommand{\Bool}{\mathrm{Bool}}		\newcommand{\BOOL}{\mathbf{Bool}}

\newcommand{\pha}{\mathsf{F}}
\newcommand{\cg}[2]{\mathsf{C}_{#1}\pha^{#2}}
\newcommand{\chain}[1]{\mathsf{C}_{#1}}

\newcommand{\fock}{\bar{\mathcal{K}}}

\newtheorem{theorem}{Theorem}[section]
\newtheorem{proposition}[theorem]{Proposition}

\newtheorem{corollary}[theorem]{Corollary}

\theoremstyle{definition}
\newtheorem{definition}[theorem]{Definition}
\newtheorem{example}[theorem]{Example}
\newtheorem{remark}[theorem]{Remark}

\newcommand{\Cc}{\mathbb{C}}

\newcommand{\Rr}{\mathbb{R}}
\newcommand{\Xx}{\mathbb{X}}
\newcommand{\Ss}{\mathbb{S}} 

\newcommand{\sss}{\mathbf{s}}
\newcommand{\ttt}{\mathbf{t}}

\newcommand{\F}{\mathcal{F}}

\newcommand{\0}{\emptyset}
\newcommand{\partn}{\vdash}
\newcommand{\compn}{\models}
\newcommand{\sm}{\setminus}
\newcommand{\filter}[1]{\lceil #1 \rceil}
\newcommand{\ideal}[1]{\lfloor #1 \rfloor}
\newcommand{\anti}{\mathsf{S}} 

\newcommand{\Ph}{P_{\rm{hi}}}
\newcommand{\Pl}{P_{\rm{lo}}}
\newcommand{\Qh}{Q_{\rm{hi}}}
\newcommand{\Ql}{Q_{\rm{lo}}}

\DeclareMathOperator{\Good}{Good}
\DeclareMathOperator{\Acyc}{Acyc}
\DeclareMathOperator{\Hyb}{Hyb}

\newcommand{\Red}[1]{{\color{red}{#1}}}
\newcommand{\Blue}[1]{{\color{blue}{#1}}}
\newcommand{\Green}[1]{{\color{ForestGreen}{#1}}}

\newcommand{\gr}[1]{\|#1\|} 
\newcommand{\decomp}[2]{{#1}^{[#2]}} 

\author{Kevin Marshall}
\address{Department of Mathematics, University of Kansas, Lawrence, Kansas 66045, USA}
\email{kmarsh729@gmail.com}
\author{Jeremy L.\ Martin}
\thanks{JLM was partially supported by Simons Collaboration Grant \#315347.}
\address{Department of Mathematics, University of Kansas, Lawrence, Kansas 66045, USA}
\email{jlmartin@ku.edu}
\title{Hopf Monoids of Set Families}
\keywords{Hopf monoid, antipode, partially ordered set, order ideal, symmetric function}
\subjclass{18M80, 
16T30, 
06A07}
\date{\today}
\begin{document}

\begin{abstract}
A \textit{grounded set family} on $I$ is a subset $\mathcal{F}\subseteq2^I$ such that $\emptyset\in\mathcal{F}$.  We study a linearized Hopf monoid $\mathbf{SF}$ on grounded set families, with restriction and contraction inspired by the corresponding operations for antimatroids.  Many known combinatorial species, including simplicial complexes and matroids, form Hopf submonoids of $\mathbf{SF}$, although not always with the ``standard'' Hopf structure (for example, our contraction operation is not the usual contraction of matroids).  We use the topological methods of Aguiar and Ardila to obtain a cancellation-free antipode formula for the Hopf submonoid of lattices of order ideals of finite posets.  Furthermore, we prove that the Hopf algebra of lattices of order ideals of chain forests (disjoint unions of chains) extends the Hopf algebra of symmetric functions, and that its character group extends the group of formal power series in one variable with constant term 1 under multiplication.
\end{abstract}

\maketitle

\section{Introduction}

Hopf monoids arise in combinatorics as collections of labeled objects that can be joined into larger objects and split into smaller objects of the same type in a coherent fashion.  There are standard and well-known Hopf monoid structures on objects such as matroids, graphs, and posets; see \cite{AA}.  In this paper we study a linearized Hopf monoid $\SFAM$ whose underlying objects are much more general: they are merely families $\F$ of subsets of a finite ground set $I$, with the only requirement that they be \textit{grounded}, i.e., $\0\in\F$.  Among other familiar objects, matroids and (finite) posets give rise to submonoids of $\SFAM$, since one can encode a matroid by its independence complex, and a poset by its family of order ideals.  However, the Hopf monoid structures on matroids and posets are not the same as the more familiar ones described in \cite{AA}, as we now explain.

The original motivation of this research was to construct a Hopf monoid structure on \textit{antimatroids}.  An antimatroid on ground set $I$ is a set family $\F\subseteq 2^I$ that is \textit{accessible} (for every nonempty $A\in\F$, there exists $x\in A$ such that $A\sm\{x\}\in\F$) and closed under union.  Just as matroids provide a combinatorial model of linear independence, antimatroids model convexity: specifically, if $I$ is a set of points in Euclidean space, then the family $\{A\subseteq I\colon \text{conv}(I\sm A)\cap A=\0\}$ is an antimatroid.  Standard sources on antimatroids include \cite{EJ} and (in the context of greedoids) \cite{greedoids}.  We define the product of antimatroids $\F_1$ and $\F_2$ on disjoint ground sets as the join
\[\F_1*\F_2 = \{X\cup Y \colon X\in\F_1,\ Y\in\F_2\}.\]
For an antimatroid $\F$ on ground set $I$, we define the restriction and contraction\footnote{See Remark~\ref{rem:terminology} for a note on terminology.} with respect to a decomposition $I=S\sqcup T$ by
\[
\F\vert_{S} = \{F \cap S \colon F \in \F\},\qquad
\F/_S = \{F \in \F\colon F\cap S = \0\}.
\]
Indeed, these operations define a Hopf monoid structure on antimatroids. But the antimatroid assumption is not necessary: we obtain a Hopf monoid on the species of all grounded set families.  Many familiar combinatorial objects arise as submonoids; a family tree is shown in Figure~\ref{fig:hierarchy}.  We focus in particular on three submonoids:
\begin{itemize}
\item $\LOI$ (lattices of order ideals of posets);
\item $\SIMP$ (simplicial complexes);
\item $\CF$ (lattices of order ideals of chain forests).
\end{itemize}

Every connected Hopf monoid admits an \textit{antipode} map $\anti$, defined by a certain commutative diagram \cite[p.11]{AguiarMahajan}.  The general closed formula for the antipode, known as the \textit{Takeuchi formula}, is in general highly non-cancellation-free, so a central problem for a given Hopf monoid is to simplify the Takeuchi formula.  Aguiar and Ardila~\cite[\S1.6]{AA} described a topological method for simplifying the antipode formula by interpreting its coefficients topologically as Euler characteristics of subfans of the braid arrangement; they accomplished this for the Hopf monoid of generalized permutahedra.  There does not appear to be a clear cancellation-free formula valid on all of~$\SFAM$.  On the other hand, the submonoid $\LOI$ \textit{does} admit a cancellation-free formula, which we now describe.

Let $P$ be a finite poset, and let $J(P)$ be its lattice of order ideals (which is a grounded set family).  A \textit{fracturing} $Q$ of $P$ is a disjoint union of induced subposets of $P$, whose union need not be all of $P$.  It is relatively easy to see that every term in the Takeuchi formula for $\anti(P)$ is a fracturing of $P$.  The more difficult task is to identify the \textit{good fracturings}, i.e., those that actually appear in the antipode, and to compute their coefficients.  The first question is answered by Prop.~\ref{classify-good-fracturings}: a fracturing $Q$ is good if it contains all minimal elements of $P$ and if the ordering of $P$ induces an ordering on the Hasse components of $Q$.  Following the method of \cite{AA}, the coefficient of $J(Q)$ can be interpreted as an Euler characteristic; the fan in question is not convex, but can be further decomposed as a union of convex fans, so that its Euler characteristic can be computed by inclusion/exclusion.  The result is a cancellation-free formula for the antipode (Theorem~\ref{thm:antipode-LOI}):
\[
\anti(J(P)) = \sum_{\text{good fracturings }Q} (-1)^{c(Q)+|P\sm Q|} J(Q)
\]
where $c(Q)$ is the number of Hasse components of $Q$.  Furthermore, we give a cancellation-free formula (Theorem~\ref{thm:antipode-of-ordinal-sum}) for the antipode of the ordinal sum of two posets.  The results on $\LOI$ appear in Section~\ref{sec:antipode-LOI} of the paper.

Section~\ref{sec:submonoid-simp} studies the Hopf monoid $\SIMP$ of simplicial complexes, which is the maximal cocommutative Hopf submonoid of $\SFAM$.  Here both restriction and contraction reduce to taking induced subcomplexes, giving the same coproduct on simplicial complexes studied by Benedetti, Hallam and Machacek~\cite{BHM} (see also~\cite[S5.3]{AA}), but with a different product, namely join instead of disjoint union.  The antipode on $\SIMP$ is not multiplicity-free, and appears intractable to compute in general, although we do give an explicit formula for skeletons of simplices (Theorem~\ref{thm:simplex-skeleton}).

Section~\ref{sec:chainforest} focuses on the Hopf monoid $\CF$ of chain forests, or rather the Hopf algebra $\CFA$ obtained from it by applying a Fock functor.  We show that $\CFA$ is an extension of the well-known Hopf algebra of symmetric functions (Theorem~\ref{thm:punchline}).  Moreover, its group of characters is an extension of the multiplicative group of formal power series in one variable with constant coefficient~1 (Theorem~\ref{thm:power-series-subgroup}), so that the antipode corresponds to reciprocation of power series.

\section{Background} \label{sec:background}

For a poset $P$ and a subset $A$, we write $\ideal{A}_P$ and $\filter{A}_P$ for, respectively, the order ideal and order filter generated by $A$ (dropping the subscript when no ambiguity can arise).

\subsection{Set compositions and their geometry} \label{sec:posets}

Here we summarize the correspondence between the combinatorics of (pre)posets and the geometry of the braid arrangement, for which the principal source is~\cite[\S3]{GP}.

A \defterm{set composition} of a finite set $I$ is an ordered partition $\Phi$ of $I$ into pairwise-disjoint sets $\Phi_1,\dots,\Phi_d$, called its \defterm{blocks}.  We write $\Phi\compn I$ or $\Phi\in\Comp(I)$.  Typically, $I=[n]=\{1,\dots,n\}$; in this case we can express $\Phi$ concisely by writing the blocks as sequences of digits, separated by bars: e.g., $\Phi=14|2|36|5$.  If $i\in\Phi_a$ and $j\in\Phi_b$, we write $i<_\Phi j$, $i>_\Phi j$, or $i=_\Phi j$ according as $a<b$, $a>b$, or $a=b$.  For instance, if $\Phi=14|2|36|5$ then $3=_\Phi6$ and $4<_\Phi3$.  The \defterm{(open) geometric realization} of $\Phi$ is the polyhedron
\[\gr{\Phi}=\{(x_1,\dots,x_n)\in\Rr^n \colon x_i<x_j \iff i<j\},\]
which is in fact a cone of the fan defined by the braid arrangement in $\Rr^n$ (for short, of the \defterm{braid fan}).  In particular $\dim\gr{\Phi}=|\Phi|$, the number of blocks of~$\Phi$.

A \defterm{preposet} is a binary relation $\leq$ that is reflexive and transitive, but not necessarily antisymmetric.  A preposet on $[n]$ can be regarded as a partial order on the blocks of a partition of $[n]$, as in Figure~\ref{fig:preposet}.
A linear preposet (one in which all elements are comparable) is thus equivalent to a set composition.  Moreover, we may define a linear extension of a preposet just as for a poset.
\begin{figure}[hb]
\begin{center}
\begin{tikzpicture}
\draw(0,0)--(1,1)--(2,0)--(3,1);
\foreach \x/\y/\l in {0/0/24, 1/1/1, 2/0/5, 3/1/36} \node[fill=white] at (\x,\y){\l};
\begin{scope}[shift={(8,0)}]
\draw(0,-1)--(0,2);
\foreach \x/\y/\l in {0/-1/24, 0/2/1, 0/0/5, 0/1/36} \node[fill=white] at (\x,\y){\l};
\end{scope}
\end{tikzpicture}
\end{center}
\caption{(Left) A preposet $Q$ on $[6]$.  (Right) A linear extension $Q'$ of $Q$.\label{fig:preposet}}
\end{figure}
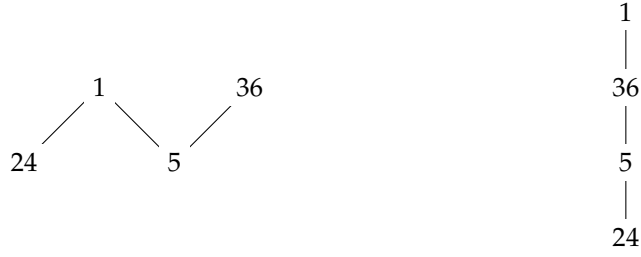

The (open) geometric realization $\gr{P}$ of a preposet $P$ on $[n]$ is the polyhedron in $\Rr^n$ defined by equalities and inequalities corresponding to the relations of $P$.  For instance, the preposet $Q$ shown in Figure~\ref{fig:preposet} corresponds to the polyhedron
\[\{(x_1,\dots,x_6)\in\Rr^6\colon x_2=x_4<x_1,\ x_5<x_1,\ x_5<x_3=x_6\}.\]
We may also define $\gr{P}$ as the interior of the union of the closures of the geometric realizations of preposet linear extensions of $P$.  Accordingly, geometric realization gives a bijection between preposets and convex unions of cones of the braid arrangement.  (These objects are called ``braid cones'' in \cite{GP}, but we reserve that term for single cones of the braid arrangement.)

The intersections of the braid cones with the $(n-2)$-sphere
\[\left\{(x_1,\dots,x_n)\in\Rr^n\colon \textstyle\sum_{i=1}^nx_i=0,\ \sum_{i=1}^nx_i^2=1\right\}.\]
form a triangulation of that sphere.  Therefore, collections of braid cones can often be viewed as (relative) simplicial complexes, e.g., for the purpose of calculating Euler characteristics.

\subsection{Hopf monoids} \label{sec:Hopf}

Next we sketch the theory of Hopf monoids in species, as described in more detail in~\cite{AguiarMahajan} and~\cite{AA}.

Let $\Set$ denote the category of sets with arbitrary functions and $\Setx$ the category of finite sets with bijections.
A \defterm{set species} is a covariant functor $\rmP:\Setx\to\Set$, sending a finite set $I$ to a set $\rmP[I]$.  Typically, $\rmP[I]$ is a set of combinatorial objects labeled by ground set $I$.  Functoriality means that every bijection $I\to J$ can be thought of as a relabeling of the ground set, which naturally induces a relabeling of the combinatorial objects in question.  We shall always work with set species that are \defterm{connected}, i.e., for which $|\rmP[\0]|=1$.

A \defterm{Hopf monoid in set species} is a set species $\rmH$ equipped with \defterm{product} and \defterm{coproduct} maps
\begin{equation} \label{hopferations}
\begin{aligned}
\mu_{S,T}:\rmH[S] \times \rmH[T] &\to \rmH[I] &&&&&
\Delta_{S,T}:\rmH[I] &\to \rmH[S] \times \rmH[T]\\
(x,y) &\mapsto x\cdot y &&&&&
z &\mapsto (z|_S,\;z/_S)
\end{aligned}
\end{equation}
(for every decomposition $I = S \sqcup T$), satisfying the axioms of naturality, unitality, associativity, coassociativity, and compatibility; see~\cite[pp.16--17]{AA} for the details.  In particular, associativity and coassociativity say that product and coproduct can be iterated: for every ordered set composition $\Phi=\Phi_1|\cdots|\Phi_d\compn I$, there are well-defined operations
\begin{equation} \label{hopferations-iterated}
\begin{aligned}
\mu_\Phi:\rmH[\Phi_1] \times\cdots\times \rmH[\Phi_d] &\to \rmH[I], &&&&&
\Delta_\Phi:\rmH[I] &\to \rmH[\Phi_1] \times\cdots\times \rmH[\Phi_d].
\end{aligned}
\end{equation}

In the connected case, the element of $\rmH[\0]$ is the multiplicative unit.  The Hopf monoid may or may not be commutative or cocommutative.  (We say that $\rmH[\0]$ is \defterm{cocommutative} if, whenever $I=S\sqcup T$ and $z\in\rmH[i]$, then $(z|_S,\;z/_S)=(z/_T,\;z|_T)$.) The elements $z|_S$ and $z/_S$ are called the \defterm{restriction} of $z$ to $S$ and the \defterm{contraction} of $z$ by $S$, respectively.  A \defterm{submonoid} of $\rmH$ is a subspecies $\rmG$ (i.e., a species $\rmG$ such that $\rmG[I] \subset \rmH[I]$ for all $I$) closed under product and coproduct.  It is easily checked that the intersection of submonoids is a submonoid.

Now fix a field $\Bbbk$ and let $\Vec$ be the category of $\Bbbk$-vector spaces with linear transformations.  A \defterm{vector species} is a covariant functor $\bfP:\Setx\to\Vec$, sending $I$ to $\bfP[I]$.  A \defterm{Hopf monoid in vector species} is a vector species $\bfH$ equipped with $\Bbbk$-linear product and coproduct operations
\begin{equation} \label{hopferations-vec}
\begin{aligned}
\mu_{S,T}:\bfH[S] \otimes \bfH[T] &\to \bfH[I] &&&&&
\Delta_{S,T}:\bfH[I] &\to \bfH[S] \otimes \bfH[T]\\
x\otimes y &\mapsto x\cdot y &&&&&
z &\mapsto z|_S\,\otimes\,z/_S
\end{aligned}
\end{equation}
satisfying the linear versions of the axioms of naturality, unitality, associativity, coassociativity, and compatibility.  We shall always assume that Hopf monoids in vector species are connected, i.e., $\bfH[\0]\isom\Bbbk$; the unit element of $\Bbbk$ behaves as a multiplicative unit on $\bfH$.

The \defterm{linearization} of a set species $\rmP$ is the vector species $\bfP=\Bbbk(\rmP)$, where $\bfP[I]$ is the $\Bbbk$-vector space with basis $\rmP[I]$.  Similarly, the linearization of a Hopf monoid in set species $\rmP$ is the vector species $\Bbbk(\rmP)$, with product and coproduct defined by linearly extending those of $\rmP$.

Hopf monoids can be regarded as generalizations of groups \cite[Ex.~1.31]{GR}, \cite[p.88]{AguiarMahajan}; the analogy of group inversion is the \defterm{antipode} of a Hopf monoid.  The antipode is a collection of maps $\anti_I:\bfH[I]\to\bfH[I]$ defined by a certain commutative diagram \cite[Defn.~1.15]{AguiarMahajan}, or equivalently by \textit{Takeuchi's formula}
\cite[Defn.~1.1.11]{AA}:
\begin{equation} \label{Takeuchi}
\anti_I = \sum_{\Phi \compn I} (-1)^{|\Phi|} \mu_\Phi\circ\Delta_\Phi.
\end{equation}
While general and explicit, Takeuchi's formula typically produces many similar terms, so for a specific Hopf monoid one would like to find a cancellation-free formula.  An important example is Aguiar and Ardila's cancellation-free antipode formula for the Hopf monoid of generalized permutahedra
\cite[\S1.6]{AA}, obtained by combining like terms and interpreting the coefficients as Euler characteristics of subcomplexes of a triangulated sphere, using the geometry described in \S\ref{sec:posets}.

\section{The Hopf monoid structure on grounded set families}\label{sec:setfam}

A \defterm{set family} is a pair $(\F,I)$, where $I$ is a finite ground set and $\F\subseteq 2^I$.  For brevity, we often drop the reference to the ground set~$I$ when it is clear from context.  We will assume that all set families are \defterm{grounded}, i.e., $\0\in\F$.  On the other hand, we do \textit{not} assume that each element of $I$ must belong to some set in $\F$; an element that does not do so is called a \defterm{phantom}.  Phantoms are somewhat analogous to loops in a graph.  They arise naturally from operations such as contraction (see Remark~\ref{rmk:phantom}) and so cannot be ignored entirely, but typically removing them has a very simple effect (e.g., Proposition~\ref{prop:exorcism} below).

Define a set species $\SFam$ by letting $\SFam[I]$ be the set of grounded set families $(\F,I)$.  In fact $\SFam$ is connected, since the unique grounded set family on $\0$ is $\{\0\}$.  In addition, we set $\SFAM=\Bbbk(\SFam)$.  (As a warning to the reader, these species have no connection whatsoever with the Hopf monoids of \textit{submodular functions} discussed in \cite[\S3.1]{AA} which are denoted by the same symbols.)

We now construct a Hopf product and coproduct on $\SFam$.  For $I_1\cap I_2=\0$, the \defterm{join} of grounded set families $(\F_1,I_1)$ and $(\F_2,I_2)$ is $(\F_1*\F_2,I_1\sqcup I_2)$, where
\[
\F_1*\F_2 = \{X \cup Y\colon X \in \F_1, Y \in \F_2\}.
\]
(This choice of join justifies our insistence on groundedness: the trivial family $\{\0\}$ is a two-sided identity for join.)
For $(\F,I)$ a set family and $S\subseteq I$, the \defterm{restriction} $\F|_S$ and the \defterm{contraction} $\F/_S$ are the set families
\begin{equation} \label{restrict-contract}
\F\vert_{S} = \{F \cap S \colon F \in \F\},\qquad
\F/_S = \{F \in \F\colon F\cap S = \0\},
\end{equation}
with ground sets $S$ and $I\sm S$, respectively.

\begin{remark} \label{rem:terminology}
We have chosen these terms in parallel with the more familiar operations of those names on matroids (in particular, this restriction operation is precisely restriction for matroids).  What we call ``restriction'' and ``contraction'' are called respectively ``trace'' and ``restriction'' in \cite{greedoids} (p.325 and p.292, respectively).  Meanwhile, \cite{EJ} uses ``restriction'' in the same sense as \cite{greedoids}, but uses contraction for a different operation that is not defined for every decomposition $S\cup T$.
\end{remark}

\begin{remark} \label{rmk:phantom}
If $(\F,I)$ is phantom-free, then so are all its restrictions, but not necessarily its contractions: for instance, if $I=\{1,2\}$, $\F=\{\0,\{1\},\{1,2\}\}$, and $S=\{1\}$, then $\F/_S$ is the trivial family $\{\0\}$ on ground set $\{2\}$.  This is why we must allow the possibility of phantoms.
\end{remark}

\begin{theorem} \label{thm:SFam-is-HM}
The set species $\SFam$ admits the structure of a commutative Hopf monoid, with product $\F_1\cdot\F_2=\F_1 * \F_2$ and coproduct
$\Delta_{S,T}(\F) = (\F\vert_S,\F/_S)$.
\end{theorem}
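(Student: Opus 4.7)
The plan is to verify directly that the five Hopf monoid axioms hold with these structure maps. Naturality is immediate, since restriction, contraction, and join are defined entirely in terms of set-theoretic operations, which commute with any relabeling bijection. Unitality reduces to the observations that $\{\0\}*\F=\F$ (because $X\cup\0=X$) and that $\F|_\0=\{\0\}$, $\F/_\0=\F$. Associativity and commutativity of the product are clear from the description $\F_1*\cdots*\F_k=\{X_1\cup\cdots\cup X_k\colon X_i\in\F_i\}$, valid on pairwise-disjoint ground sets.

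For coassociativity, I would establish the two identities
\[(\F|_{A\sqcup B})|_A=\F|_A,\qquad (\F|_{A\sqcup B})/_A=(\F/_A)|_B,\qquad \F/_{A\sqcup B}=(\F/_A)/_B\]
whenever $I=A\sqcup B\sqcup C$. Each is a one-line check: for the middle one, observe that when $F\cap A=\0$ we have $F\cap(A\sqcup B)=F\cap B$; for the last, both sides equal $\{F\in\F\colon F\cap A=\0=F\cap B\}$. Taken together, these show that applying $\Delta_{A,B\sqcup C}$ followed by $\Delta_{B,C}$ on the second factor agrees with applying $\Delta_{A\sqcup B,C}$ followed by $\Delta_{A,B}$ on the first factor, which is coassociativity.

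The main work is compatibility. Fix two decompositions $I=S_1\sqcup S_2=T_1\sqcup T_2$ and set $A=S_1\cap T_1$, $B=S_1\cap T_2$, $C=S_2\cap T_1$, $D=S_2\cap T_2$. Given $(\F_1,\F_2)\in\SFam[S_1]\times\SFam[S_2]$, computing $\Delta_{T_1,T_2}(\F_1*\F_2)$ yields
\[(\F_1*\F_2)|_{T_1}=\{(X\cap A)\cup(Y\cap C)\colon X\in\F_1,Y\in\F_2\}=\F_1|_A*\F_2|_C,\]
using $X\subseteq S_1$ so $X\cap T_1=X\cap A$, and similarly for $Y$. An analogous computation gives $(\F_1*\F_2)/_{T_1}=\F_1/_A*\F_2/_C$, since $(X\cup Y)\cap T_1=\0$ forces $X\cap A=\0$ and $Y\cap C=\0$ independently. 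On the other hand, first applying $\Delta_{A,B}$ to $\F_1$ and $\Delta_{C,D}$ to $\F_2$, swapping the middle two factors, and then multiplying yields exactly the same pair $(\F_1|_A*\F_2|_C,\F_1/_A*\F_2/_C)$. This is precisely the compatibility axiom.

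The only step that requires care is the compatibility computation, because it combines all four operations at once; everything else reduces to direct unfolding of definitions. Once these axioms are verified, the linearization $\SFAM=\Bbbk(\SFam)$ automatically inherits the structure of a linearized commutative Hopf monoid in vector species.
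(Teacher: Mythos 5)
Your proof is correct and matches the approach the paper alludes to: the paper explicitly declares the proof "routine and technical" and sketches it by asserting that coassociativity and compatibility follow from "straightforward calculations," and you have simply carried out those calculations, identifying the same key observations (e.g., $(X\cup Y)\cap T_1=(X\cap A)\cup(Y\cap C)$ for compatibility). The only cosmetic slip is that you announce "two identities" for coassociativity but correctly write and verify three.
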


The proof is routine and technical, so we only sketch it here.  Unitality and naturality are immediate from the definitions, and associativity and commutativity of the product follows from associativity and commutativity of union.  Coassociativity and compatibility are checked by straightforward calculations.  For later use, we observe that the iterated coproduct operation with respect to a set composition $\Phi$ is given by
$\Delta_\Phi(\F) = (\F_1, \dots, \F_m)$, where
\begin{equation} \label{iterated-coproduct}
\F_i = \{A \cap \Phi_i\colon A \in \F \text{ and } A\cap\Phi_j = \0\ \ \forall j<i\}.
\end{equation}

Before going further, we calculate the effect on a set family of adding or removing a phantom, which is essentially to change the sign of its antipode.  Let $I$ be a finite set and $x \notin I$.  Let $\gamma:\SFAM[I]\to\SFAM[I\cup\{x\}]$ be the linear map defined on standard basis elements by $\gamma(\mathcal{F},I)=(\mathcal{F},I\cup\{x\})$.  Thus $\gamma$ is an isomorphism from $\SFAM[I]$ to the vector space spanned by set families on $I\cup\{x\}$ in which $x$ is a phantom.

\begin{proposition}\label{prop:exorcism}
With the foregoing setup, $\anti_{I\cup\{x\}}\circ\gamma = -\gamma\circ\anti_I$.
\end{proposition}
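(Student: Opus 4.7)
The strategy is to apply Takeuchi's formula \eqref{Takeuchi} directly and compare term by term, exploiting the fact that $x$ is a phantom of $\gamma(\F)$. Each set composition $\Psi\compn I\cup\{x\}$ projects to a set composition $\Phi\compn I$ by removing $x$ from its block (and deleting the block if it becomes empty). The plan is to show that every Takeuchi term for $\gamma(\F)$ equals the $\gamma$-image of the corresponding Takeuchi term for $\F$, and then to sum the signs over each fibre of this projection $\Psi\mapsto\Phi$.

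The first step is to verify that $\mu_\Psi\circ\Delta_\Psi(\gamma(\F))=\gamma(\mu_\Phi\circ\Delta_\Phi(\F))$ for every $\Psi$. This follows straight from the iterated coproduct formula \eqref{iterated-coproduct}: because no $A\in\F$ contains $x$, the intersection $A\cap\Psi_i$ ignores whether $x\in\Psi_i$, and every emptiness condition $A\cap\Psi_j=\0$ that involves $x$ is automatic. If $\Psi_k=\{x\}$ is a singleton block, the $k$th component of $\Delta_\Psi(\gamma(\F))$ is the trivial family $\{\0\}$ on ground set $\{x\}$, which acts as the two-sided identity for the join $*$; otherwise the block containing $x$ produces the corresponding component of $\Delta_\Phi(\F)$ with $x$ grafted on as a phantom. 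Either way, after rejoining we recover $\mu_\Phi\circ\Delta_\Phi(\F)$ with $x$ reattached as a phantom, i.e., $\gamma$ applied to it.

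The second step is a sign count. For a fixed $\Phi\compn I$ with $d=|\Phi|$ blocks, the compositions $\Psi\compn I\cup\{x\}$ projecting to $\Phi$ come in two flavors: the $d$ compositions obtained by inserting $x$ into one of the existing blocks (so $|\Psi|=d$), and the $d+1$ compositions obtained by inserting $\{x\}$ as a new singleton block in any of the $d+1$ gaps (so $|\Psi|=d+1$). Their total contribution is
\[
\bigl[d(-1)^{d}+(d+1)(-1)^{d+1}\bigr]\,\gamma(\mu_\Phi\circ\Delta_\Phi(\F))=-(-1)^{|\Phi|}\,\gamma(\mu_\Phi\circ\Delta_\Phi(\F)).
\]
Summing over all $\Phi\compn I$ then yields $\anti_{I\cup\{x\}}(\gamma(\F))=-\gamma(\anti_I(\F))$.

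The main obstacle is conceptual rather than technical: one must recognize that the phantom makes $\mu_\Psi\circ\Delta_\Psi$ factor cleanly through $\mu_\Phi\circ\Delta_\Phi$, after which the identity reduces to the elementary arithmetic $d-(d+1)=-1$. No cancellation of distinct Takeuchi terms is needed, which is why the argument can be carried out at the level of the uncancelled formula.
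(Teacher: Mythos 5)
Your proof is correct and takes essentially the same approach as the paper: partition the set compositions of $I\cup\{x\}$ according to whether $x$ appears as a singleton block, project to compositions of $I$ by removing $x$, count the fibres ($d+1$ and $d$ respectively over a composition with $d$ blocks), and perform the sign arithmetic $d(-1)^d + (d+1)(-1)^{d+1} = -(-1)^d$. The paper's proof silently uses the same Takeuchi-term identity $\mu_\Psi\Delta_\Psi(\gamma(\F)) = \gamma(\mu_\Phi\Delta_\Phi(\F))$ that you verify explicitly from \eqref{iterated-coproduct}, so your version is if anything slightly more complete.
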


\begin{proof}
Abbreviate $I'=I\cup\{x\}$.
Let $C_1=\{\Phi\compn I'\colon\{x\}\in\Phi\}$.  Define a map $f:C_1\to\Comp(I)$ by removing the block $\{x\}$.  Then $f(\Phi)$ is a composition with $|\Phi|-1$ blocks, and $|f^{-1}(\Psi)|=|\Psi|+1$ for each~$\Psi\in\Comp(I)$.

Now let $C_2=\Comp(I')\sm C_1$, and define $g:C_2\to\Comp(I)$ by deleting $x$ from the (non-singleton) block containing it.  Then $g(\Phi)$ is a composition with $|\Phi|$ blocks, and $|g^{-1}(\Psi)|=|\Psi|$ for each~$\Psi\in\Comp(I)$.

Now, starting with Takeuchi's formula,
\begin{align*}
\anti_{I'}(\gamma(\mathcal{F},I)) = \anti_{I'}(\mathcal{F},I')
&= \sum_{\Phi\in C_1} (-1)^{|\Phi|} \mu_\Phi(\Delta_\Phi(\mathcal{F},I'))
+ \sum_{\Phi\in C_2} (-1)^{|\Phi|} \mu_\Phi(\Delta_\Phi(\mathcal{F},I'))\\
&= \sum_{\Psi \compn I} (|\Psi|+1)(-1)^{|\Psi|+1} \gamma(\mu_\Psi(\Delta_\Psi(\mathcal{F},I))) + \sum_{\Psi \compn I} |\Psi|(-1)^{|\Psi|} \gamma(\mu_\Psi(\Delta_\Psi(\mathcal{F},I)))\\
&= -\sum_{\Psi \compn I} (-1)^{|\Psi|} \gamma(\mu_\Psi(\Delta_\Psi(\mathcal{F},I)))\\
&= -\gamma(\anti_I(\mathcal{F},I)).\qedhere
\end{align*}
\end{proof}

Let $\F$ be a set family.  We say that:
\begin{itemize}
\item $\F$ is \defterm{accessible} if for each nonempty $X \in \F$ there is an element $x \in X$ such that $X\sm\{x\} \in \F$.
\item $\F$ is \defterm{intersection-closed} if whenever $X, Y \in \F$, then $X \cap Y \in \F$.
\item $\F$ is \defterm{union-closed} if whenever $X, Y \in \F$, then $X \cup Y \in \F$.
\end{itemize}
Let $\Acc$, $\Int$, and $\Union$ denote respectively  the sets of accessible, intersection-closed, and union-closed set families.
It is routine to check that each of these sets is closed under join, restriction, and contraction, hence is a Hopf submonoid of $\SFam$.  Consequently, antimatroids (accessible set families that are union-closed) form a Hopf submonoid of $\SFam$, as do finite \defterm{near-topologies}, or set families that are intersection- and union-closed.  (To explain this terminology, if $(\mathcal{F},I)$ is a near-topology, then $\mathcal{F}$ contains a unique maximal element $I'\subseteq I$, and is a topology on $I'$; the elements of $I\sm I'$ are phantoms.)  Several more well-known families of combinatorial objects form Hopf submonoids, as we now describe in more detail.

\subsection{Simplicial complexes}

A \defterm{simplicial complex} on $I$ is a grounded set family $(\F,I)$ that is closed under taking subsets.  Since we allow phantom vertices, we do \textit{not} insist that each element of the ground set is actually a vertex of $\F$.  The join of two simplicial complexes is a simplicial complex, and for every decomposition $I=S\sqcup T$, both $\F|_S$ and $\F/_S=\F|_T$ are simplicial complexes.  Therefore, the species $\Simp$ of simplicial complexes forms a cocommutative Hopf submonoid of $\SFam$.  In fact, $\Simp$ is the \textit{universal} cocommutative Hopf submonoid, for the following reason.  Let $\rmH\subset\SFam$ be a cocommutative submonoid, $\F \in \rmH[I]$, $X \in \F$, and $Y \subseteq X$.  Then $Y \in \F|_Y = \F/_{\Bar{Y}}$, which implies that $Y \in \F$.  Hence $\F$ is a simplicial complex, and it follows that $\rmH \subseteq \Simp$.

\subsection{Matroids}

A \defterm{matroid independence complex}, or just a \defterm{matroid complex}, is a simplicial complex $(\F,I)$ that satisfies the ``donation axiom'' or ``augmentation axiom'':
\begin{equation} \label{donation}
\text{if $A,B\in\F$ and $|A|<|B|$, then there exists $x\in B\sm A$ such that $A\cup\{x\}\in\F$.}
\end{equation}
Join and restriction correspond to the elementary matroid operations of direct sum and restriction, respectively \cite[pp.16,22]{Oxley}.  Our operation of contraction does \textit{not} coincide with contraction in the usual matroid sense \cite[p.104]{Oxley}, but rather to restriction to the complement.  Thus the species $\Mat$ of matroids defines a cocommutative submonoid of $\SFam$ (in fact, of $\Simp$) that differs from the (non-cocommutative) Hopf monoid of matroids described in~\cite[\S3.3]{AA}.

\subsection{Lattices of order ideals}

Recall that an \defterm{order ideal} of a poset $P$ is a subset $A\subseteq P$ such that if $x\in A$ and $y<_Px$, then $y\in A$.  Birkhoff's well-known theorem states that the set $J(P)$ of order ideals is a distributive lattice, with meet and join given by intersection and union respectively, and that the correspondence between finite posets and finite distributive lattices is bijective.  Accordingly, we define a set species $\Loi$ by
\[\Loi[I] = \{\text{lattices of order ideals on posets $P$ with ground set contained in $I$}\}.\]
Note the use of ``contained in'' rather than ``equal to'', in order to allow for the possibility of phantoms.  Observe that every $J(P)\in\Loi[I]$ is grounded, since $\0$ is an order ideal of every poset.  (In fact, the set families $J(P)$ are precisely what are known as \emph{poset antimatroids}; see \cite[\S2.3]{KLSgreedoids}, \cite[\S8.7.C]{greedoids}.)

For a poset $P$ and $A\subseteq P$, denote by $P[A]$ the restriction of $P$ to $A$, i.e., the poset on $A$ with order relation inherited from $P$.  It is elementary to check that
\begin{equation} \label{restrictIdeals}
J(P[A]) = J(P)|_A
\end{equation}
where the bar denotes restriction in the sense of~\eqref{restrict-contract}.

\begin{proposition}\label{LOI-Form-Submonoid}
The species $\Loi$ forms a Hopf submonoid of $\SFam$.
\end{proposition}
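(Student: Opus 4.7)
The plan is to verify directly that $\Loi[I]$ is closed under the product $*$ and the coproduct $(|_S, /_S)$ of $\SFam$. In each case, I will exhibit an explicit poset whose lattice of order ideals realizes the resulting set family. Throughout, write $P \subseteq I$ for the ground set of a poset $P$, with elements of $I \sm P$ serving as phantoms.

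The product is the easiest step. Given posets $P_1$ and $P_2$ on disjoint ground sets, let $P_1 \sqcup P_2$ denote their disjoint union with no new order relations. A subset $F \subseteq P_1 \cup P_2$ is an order ideal of $P_1 \sqcup P_2$ if and only if $F \cap P_i \in J(P_i)$ for each~$i$, so $J(P_1) * J(P_2) = J(P_1 \sqcup P_2)$. Restriction then follows immediately from~\eqref{restrictIdeals}: for $S \subseteq I$, every $F \in J(P)$ satisfies $F \cap S = F \cap (S \cap P)$, and therefore $J(P)|_S = J(P[S \cap P])$ as grounded set families on~$S$.

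The main case is contraction. Set $Q = P \sm \filter{S \cap P}_P$; since $\filter{S \cap P}_P$ is an up-set of $P$, its complement $Q$ is a down-set of $P$ that is disjoint from~$S$. I claim $J(P)/_S = J(P[Q])$ on ground set $I \sm S$. If $F \in J(P[Q])$, then $F \subseteq Q$ is automatically downward closed in $P$ (since $Q$ itself is), and $F \cap S = \emptyset$. Conversely, suppose $F \in J(P)$ with $F \cap S = \emptyset$. If some $y \in F$ lay in $\filter{S \cap P}_P$, then some $x \in S \cap P$ with $x \leq_P y$ would force $x \in F \cap S$, a contradiction; hence $F \subseteq Q$ and $F \in J(P[Q])$.

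There is no real mathematical obstacle; the only point requiring care is the bookkeeping of phantom elements, which is precisely what motivates allowing posets whose ground sets are merely \emph{contained in}~$I$ in the definition of $\Loi[I]$.
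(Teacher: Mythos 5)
Your proof is correct and follows essentially the same path as the paper: product via disjoint union of posets, restriction via $J(P)|_S = J(P[S\cap P])$, and contraction by showing that $J(P)/_S$ is the lattice of order ideals of the induced subposet on the complement of the order filter generated by $S\cap P$. Your upfront observation that this complement $Q$ is a down-set of $P$ makes the inclusion $J(P[Q]) \subseteq J(P)/_S$ immediate (the paper instead checks that $\ideal{B}_P = B$ for $B \in J(P[U])$), but the argument is otherwise the same.
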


\begin{proof}
First, $J(P)*J(Q)=J(P+Q)$, where $+$ means disjoint union of posets, so $\Loi$ is closed under join.

Second, we check closure under restriction and contraction.  Let $I=S\sqcup T$ and let $P$ be a poset with underlying set $I'\subseteq I$.  By~\eqref{restrictIdeals} we have $J(P)|_S=J(P[S\cap I'])\in\Loi[S]$ and
\[J(P)/_S = \{A\in J(P) \colon A\subseteq T\} = \{A\in J(P) \colon A\cap S = \0\}.\]
We claim that $J(P)/_S=J(P[U])$, where $U = \{x \in T\colon x \not > y\ \ \forall y \in S\} = I\sm\filter{S}$.  Indeed, if $A\in J(P)$ and $A\cap S=\0$, then in fact $A\subseteq U$
(for if $x\in A$ and $x\notin U$, then there is some $y\in S$ such that $y\leq x$, so $y\in A\cap S$).  It follows that $J(P)/_S\subseteq J(P[U])$.  Conversely, for $B \in J(P[U])$, consider the order ideal $A=\ideal{B}$.  If $A \cap (T\sm U)$ contains an element $x$, then (since $x\not\in U$) there is some $y \in S$ such that $y < x$, but then $y \in A$ and hence $A\cap S\neq\0$, which contradicts the definition of $U$.  We conclude that $A \cap (T\sm U) = \0$, i.e., $B = A$, and it follows that $J(P[U]) \subseteq J(P)/_S$.  So equality holds and we have shown that $\Loi$ is closed under contraction.
\end{proof}

A poset is a \defterm{chain forest} if it is the disjoint union of chains.  The subspecies $\Cf\subset\Loi$ of lattices of order ideals of chain forests (with possible phantoms) is a Hopf monoid.
(More generally, the argument of Proposition~\ref{LOI-Form-Submonoid} implies that any family of posets that is closed under disjoint union, induced subposet, and deletion of order filters gives rise to a Hopf submonoid of $\Loi$.)

A hierarchy of all the Hopf submonoids of $\SFam$ we have described (as well as $\AMat$, $\Acc$, $\Union$, and $\Bool$, all defined in the figure legend) is shown in Figure~\ref{fig:hierarchy}.  We note several observations and questions.
\begin{itemize}
\item We know that $\AMat = \Acc\cap\Union$.  On the other hand, $\Simp\subsetneq\Int\cap\Acc$ and $\Loi\subsetneq\Int\cap\Acc$.  In fact, $\Loi\cup\Simp\subsetneq\Int\cap\Acc$ (although the former is not a Hopf monoid): for instance, the set family $\{\0,1,2,3,12,13,123\}$ is intersection-closed and accessible, but is neither a simplicial complex nor a lattice of order ideals.  We do not know whether $\Int\cap\Acc$ has a nice intrinsic description.  
\item It is a standard fact that a distributive lattice is atomic if and only if it is Boolean \cite[pp.254--255]{ec-1}, from which it follows that $\Simp\cap\Loi=\Bool$.
\item Antimatroids are a special case of \textit{greedoids} \cite{greedoids,KLSgreedoids}: accessible set families that satisfy the donation condition~\eqref{donation}.
The species of greedoids is closed under contraction but not restriction, hence does not form a Hopf submonoid of $\SFam$.  For example, $\F=\{\0,2,4,14,24,23\}$ is a greedoid, but $\F|_{\{1,2,3\}}=\{\0,1,2,23\}$ fails the donation condition with $A=1$, $B=23$.
\end{itemize}

\begin{center}
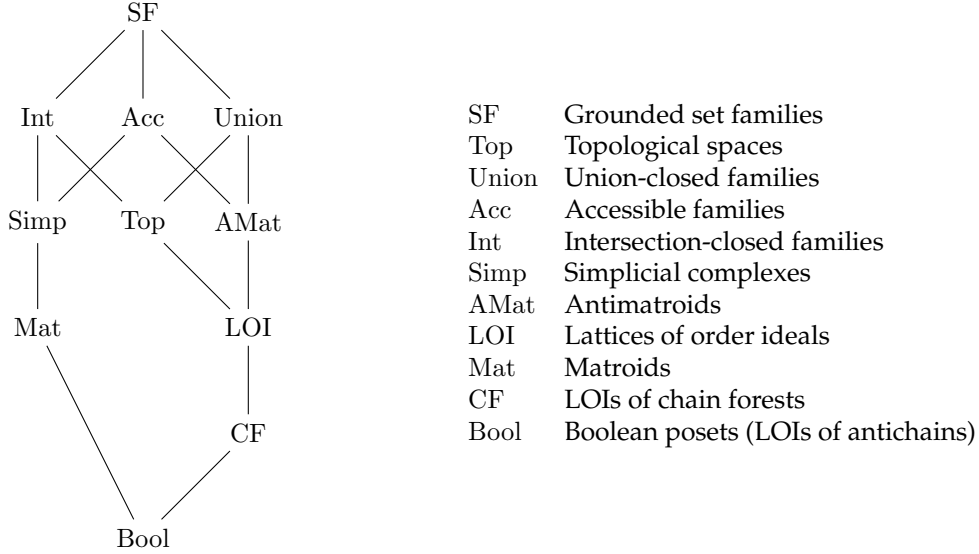
\begin{figure}[htb]
\begin{tikzpicture}[scale=.7]
\node (SFam) at (0,0) {$\SFam$};
\node (NTop) at (0,-4) {$\NTop$};
\node (Union) at (2,-2) {$\Union$};
\node (Acc) at (0,-2) {$\Acc$};
\node (Int) at (-2,-2) {$\Int$};
\node (Simp) at (-2,-4) {$\Simp$};
\node (AMat) at (2,-4) {$\AMat$};
\node (LOI) at (2,-6) {$\Loi$};
\node (CG) at (2,-8) {$\Cf$};
\node (Mat) at (-2,-6) {$\Mat$};
\node (Bool) at (0,-10) {$\Bool$};
\draw (SFam) -- (Union) -- (AMat) -- (LOI);
\draw (SFam) -- (Acc) -- (AMat);
\draw (Acc) -- (Simp) -- (Mat);
\draw (SFam) -- (Int) -- (Simp);
\draw (Int) -- (NTop);
\draw (Union) -- (NTop) -- (LOI);
\draw (Mat) -- (Bool) -- (CG) -- (LOI);
\node at (11,-5) {\begin{tabular}{ll}
$\SFam$ & Grounded set families\\
$\Int$ & Intersection-closed families\\
$\Acc$ & Accessible families\\
$\Union$ & Union-closed families\\
$\Simp$ & Simplicial complexes\\
$\NTop$ & Near-topologies\\
$\AMat$ & Antimatroids\\
$\Mat$ & Matroids\\
$\Loi$ & Lattices of order ideals\\
$\Cf$ & LOIs of chain forests\\
$\Bool$ & Boolean posets (LOIs of antichains)\end{tabular}};
\end{tikzpicture}
\caption{Submonoids of $\SFam$\label{fig:hierarchy}}
\end{figure}
\end{center}

\section{Lattices of order ideals: antipode formula} \label{sec:antipode-LOI}

We now consider the antipode problem for the linearizations of all the Hopf monoids shown in Figure~\ref{fig:hierarchy}.
Obtaining a cancellation-free formula for the antipode in $\SFAM$ appears to be out of reach.  On the other hand, we can use the topological approach of Aguiar and Ardila~\cite{AA} to obtain a concise cancellation-free formula for the antipode in the submonoid $\LOI$, and in particular to show that all the coefficients are $\pm1$.  
By Proposition~\ref{prop:exorcism}, it suffices to consider phantom-free set families of the form $\anti(J(P),I)$, where $P$ is a poset whose underlying set is $I$ (rather than merely a subset of $I$).  Accordingly, we will often suppress the ground set in equations involving the antipode (although one should keep in mind that $\anti(J(P),I)$ may contain terms with phantoms).  The following definition will be crucial in describing precisely which phantoms appear.

\begin{definition} \label{defn:betrayal}
Let $P$ be a poset on $I$, let $\Phi\compn I$, and let $x,y\in I$.  We say $x$ is \defterm{betrayed} by $y$ (with respect to~$P$ and~$\Phi$) if $y<_Px$ and $y<_\Phi x$.  We write $B(\Phi_i)$ for the set of betrayed elements in~$\Phi_i$, and put $B(\Phi)=\bigcup_i B(\Phi_i)$.  Evidently $B(\Phi)\cap\Min(P)=\0$, where $\Min(P)$ denotes the set of minimal elements of $P$.
\end{definition}

\begin{proposition} \label{LOIcprod}
Let $P$ be a poset with ground set $I$, and let $\Phi = \Phi_1|\dots|\Phi_m \compn I$.  Then
\[
\Delta_\Phi(J(P),I) = \bigotimes_{i=1}^m (J(P_i),\Phi_i)
\]
where $P_i=P[\Phi_i\setminus B(\Phi_i)]$.
\end{proposition}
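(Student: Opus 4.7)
The plan is to invoke the iterated coproduct formula \eqref{iterated-coproduct} with $\mathcal{F}=J(P)$, which reduces the claim to showing that for each $i$, the set
\[
\mathcal{F}_i = \{A \cap \Phi_i \colon A \in J(P),\ A \cap \Phi_j = \emptyset \text{ for all } j<i\}
\]
coincides with $J(P_i)$, where $P_i = P[\Phi_i \sm B(\Phi_i)]$. I would prove this by establishing the two inclusions separately; in both directions the key tool is the interplay between the order relation on $P$ and the definition of betrayal.

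For the forward inclusion $\mathcal{F}_i \subseteq J(P_i)$, fix $A \in J(P)$ with $A \cap \Phi_j = \emptyset$ for all $j<i$, and let $B = A \cap \Phi_i$. First, no element of $B$ is betrayed: if $x \in B$ and $y <_P x$ with $y \in \Phi_j$ for some $j<i$, then $A$ being an order ideal would force $y \in A \cap \Phi_j = \emptyset$, a contradiction. Thus $B \subseteq \Phi_i \sm B(\Phi_i)$. Second, $B$ is automatically an order ideal of $P_i$, since it is the intersection of the $P$-order ideal $A$ with the ground set of the induced subposet $P_i$.

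For the reverse inclusion $J(P_i) \subseteq \mathcal{F}_i$, given $B \in J(P_i)$ I set $A = \ideal{B}_P$ and verify that $A \cap \Phi_j = \emptyset$ for $j<i$ while $A \cap \Phi_i = B$. Any $z \in A$ satisfies $z \leq_P b$ for some $b \in B$; if further $z \in \Phi_j$ with $j<i$, then $z <_P b$, so $b$ is betrayed by $z$, contradicting $b \in \Phi_i \sm B(\Phi_i)$. The containment $B \subseteq A \cap \Phi_i$ is immediate, and for the reverse, if $z \in A \cap \Phi_i$ with $z <_P b \in B$, then by transitivity any betrayer of $z$ would also betray $b$, so $z$ is itself non-betrayed; hence $z <_{P_i} b$, forcing $z \in B$ since $B$ is an order ideal of $P_i$.

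The main obstacle is the last step of the reverse inclusion: ruling out the possibility that passing from $B$ to $\ideal{B}_P$ and intersecting back with $\Phi_i$ accidentally introduces new elements. This requires the observation, justified by transitivity of $<_P$ and of $<_\Phi$, that the non-betrayed elements of $\Phi_i$ form a downward-closed subset of $\Phi_i$ with respect to $<_P$. Once this is verified, the entire argument is a routine chase through the definitions.
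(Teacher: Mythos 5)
Your proof is correct and takes essentially the same approach as the paper: both directions proceed by chasing the definition of betrayal through the iterated-coproduct formula \eqref{iterated-coproduct}, with the forward inclusion using that an element of $A\cap\Phi_i$ cannot have a predecessor in an earlier block, and the reverse inclusion lifting an ideal of $P_i$ to $\ideal{B}_P$ and checking it meets no earlier block. You are in fact slightly more explicit than the paper at the final step (showing $\ideal{B}_P\cap\Phi_i = B$, via the observation that non-betrayed elements of $\Phi_i$ are downward-closed under $<_P$), a point the paper compresses into a citation of \eqref{restrictIdeals}.
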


\begin{proof}
Recall from~\eqref{iterated-coproduct} that the $i^{th}$ tensor factor is the set family
\[\F_i = \{A \cap \Phi_i\colon A \in J(P) \text{ and } A\cap(\Phi_1\cup\dots\cup \Phi_{i-1}) = \0\}.\]

If $A \in \F_i$, then $A = \tilde{A}\cap \Phi_i$ for some $\tilde{A} \in J(P)$ and $\tilde{A} \cap(\Phi_1 \cup \dots \cup \Phi_{i-1}) = \0$.  Therefore $x \in A$ implies $y \not< x$ for $y \in \Phi_1 \cup \dots \cup \Phi_{i-1}$, for otherwise $\tilde{A} \cap(\Phi_1 \cup \dots \cup \Phi_{i-1}) \neq \0$ since $y\in \tilde{A} \cap(\Phi_1 \cup \dots \cup \Phi_{i-1})$.  Thus $x \in P_i$ and consequently, $A \in J(P_i)$.

Conversely, for $A = \lfloor x_1, \dots, x_m \rfloor_{P_i}$, let $\tilde{A} = \lfloor x_1, \dots, x_m\rfloor_P$.  Then $A = \tilde{A} \cap \Phi_i$ by~\eqref{restrictIdeals}, and the generators $x_j$ all belong to $P_i$, so $\tilde{A} \cap(\Phi_1 \cup \dots \cup \Phi_{i-1}) = \0$.  Thus $A \in \F_i$.
\end{proof}

Applying $\mu_\Phi$ to the formula of Proposition~\ref{LOIcprod}, we obtain
\begin{equation} \label{muDeltaJP}
\mu_\Phi(\Delta_\Phi(J(P))) = J(P_1)*\cdots*J(P_m) = J(P_1 + \cdots + P_m).
\end{equation}
(Here the operator $*$ indicates the join of the $J(P_i)$ as set families, which is equivalent to the product of the $J(P_i)$ as lattices.)  Equation~\eqref{muDeltaJP} asserts that every term in the antipode has the form $J(Q)$, where $Q=P_1+\cdots+P_m$ and $P_i=P[\Phi_i\sm B(\Phi_i)]$ for some $\Phi\compn I$.  In particular, each component of $Q$ is contained in some block of $\Phi$.  This observation motivates the following definition.

\begin{definition} \label{fracturing}
Let $P$ be a poset with underlying set $I$.  A \defterm{fracturing} $Q$ of $P$ is a disjoint sum of induced subposets of $P$.  (We require only that $Q\subseteq P$ as sets, not that $Q=P$.)  The \defterm{support system} of a fracturing $Q$ is
\[\Supp(Q)=\Supp_P(Q)=\{\Phi\compn I\colon \mu_\Phi(\Delta_\Phi(J(P)))=J(Q)\}\]
and its \defterm{support fan} is
\[\gr{\Supp(Q)}=\{\gr{\Phi}\colon\Phi\in \Supp(Q)\}.\]
A fracturing $Q$ is \defterm{good} if $\Supp(Q)\neq\0$; the set of all good fracturings of $P$ will be denoted $\Good(P)$.  The previous discussion implies that the braid fan is the disjoint union of the support fans $\gr{\Supp(Q)}$ for $Q\in\Good(P)$.
Moreover, Proposition~\ref{LOIcprod} implies that
\begin{equation} \label{who-is-betrayed}
\Phi\in \Supp(Q) \quad\implies\quad B(\Phi)=P\setminus Q \ \text{(as sets)}.
\end{equation}
In particular, every good fracturing must contain $\Min(P)$ as a subset.
\end{definition}

\begin{example} \label{ex:XQ}
Consider the poset $P$ on $I=\{1,2,3\}$ with relations $1<3,2<3$.
The poset $Q$ on $\{1,2\}$ with no relations is a good fracturing of $P$, with support system $\Supp(Q) = \{1|3|2,1|23,1|2|3,12|3,2|1|3,2|31,2|3|1\}$.  The corresponding subfan of the braid fan is shown shaded in Figure~\ref{fig:good-frac-ex}.  (See Section~\ref{sec:posets}.) This example illustrates that a good fracturing~$Q$ need not have the same underlying set as $P$, and that support fans need not be convex.

\begin{figure}[H]
\begin{center}
\scalebox{0.6}{\begin{tikzpicture}
\newcommand{\linlength}{5}
\newcommand{\raypos}{1.8}
\newcommand{\regpos}{3}
\fill[fill=gray!20] (60:\linlength)--(0:0)--(300:\linlength)--(5,-4.3301)--(5,4.3301);
\fill[fill=gray!20] (60:\linlength)--(0:0)--(120:\linlength);
\fill[fill=gray!20] (240:\linlength)--(0:0)--(300:\linlength);
\draw[ultra thick, gray] (0:\linlength)--(0:0);
\draw[ultra thick, gray] (60:\linlength)--(60:0);
\draw[ultra thick, gray] (300:\linlength)--(300:0);
\draw[ultra thick, gray, dashed] (240:\linlength)--(240:0);
\draw[ultra thick, gray, dashed] (180:\linlength)--(180:0);
\draw[ultra thick, gray, dashed] (120:\linlength)--(120:0);
\foreach \a/\lab in {0/$12|3$, 60/$1|23$, 120/$13|2$, 180/$3|12$, 240/$23|1$, 300/$2|31$}
	\node[draw,fill=white] at (\a:\raypos) {\lab};
\foreach \a/\lab in {30/$1|2|3$, 90/$1|3|2$, 150/$3|1|2$, 210/$3|2|1$, 270/$2|3|1$, 330/$2|1|3$}
	\node at (\a:\regpos) {\lab};
\draw[very thick,fill=white] (0,0) circle (.1);
\node at (150:.6) {123};
\node at (0,-\linlength) {\LARGE(b)};
\end{tikzpicture}
} 
\caption{A non-convex support fan.\label{fig:good-frac-ex}}
\end{center}
\end{figure}
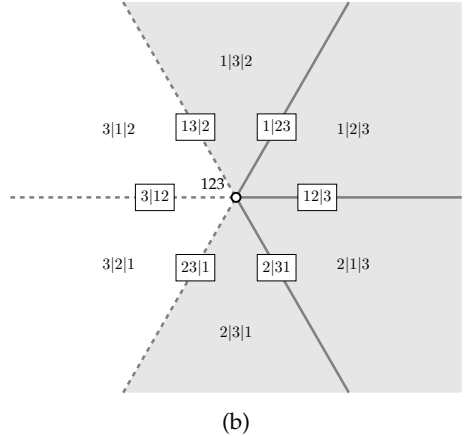
\end{example}

Observe that
\begin{equation} \label{antipode-XQ}
\anti(J(P)) ~=~ \sum_{\Phi\compn I} (-1)^{|\Phi|} \mu_\Phi \Delta_\Phi J(P)
~=~ \sum_{Q\in\Good(P)} J(Q) \underbrace{\left(\sum_{\Phi\in \Supp(Q)} (-1)^{|\Phi|} \right)}_{\varepsilon_Q}.
\end{equation}
Thus, as in \cite[\S1.6]{AA}, the coefficients $\varepsilon_Q$ can be interpreted as relative Euler characteristics of support fans.  The geometry is more complicated than the situation of \cite{AA}, since these fans are not always convex.

\begin{proposition}\label{ethos}
Let $P$ be a poset on ground set $I$, and let $Q$ be a good fracturing of $P$.  Then $\Phi \in \Supp(Q)$ if and only if the following conditions all hold:
\begin{align}
\forall i,j\in Q:\ \ & j <_Q i\ \implies\ i =_\Phi j; \label{stick-together}\\
\forall i,j\in Q:\ \ & j <_P i \text{ and } j \not<_Q i \ \implies\ i <_\Phi j; \label{no-stabby:1}\\
\forall i\in Q:\ \forall j\in P\sm Q:\ \ & j <_P i \ \implies\ i <_\Phi j; \label{no-stabby:2}\\
\forall b \in P\sm Q:\ \exists a\in P:\ \ & a <_P b \text{ and } a <_\Phi b. \label{stabby}
\end{align}
\end{proposition}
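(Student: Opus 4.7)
The plan is to unfold the definition of $\Supp(Q)$ using Proposition~\ref{LOIcprod} and equation~\eqref{muDeltaJP}: the condition $\Phi \in \Supp(Q)$ is equivalent, by Birkhoff's theorem, to the equality of posets
\[ P[\Phi_1 \sm B(\Phi_1)] + \cdots + P[\Phi_m \sm B(\Phi_m)] = Q, \]
which splits into (a) the equality of ground sets $B(\Phi) = P \sm Q$ (as already recorded in~\eqref{who-is-betrayed}) and (b) compatibility of the order relations. Both directions of the proposition will be verified by translating this equality into block-position statements about $\Phi$ and distinguishing whether prospective ``betrayers'' of a given element lie in $Q$ or in $P \sm Q$.

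For the forward direction, I would assume the above equality and verify each condition in turn. Condition~\eqref{stick-together} is immediate since $j <_Q i$ forces $i,j$ into a common summand, hence a common block. For~\eqref{no-stabby:1}, if $j <_P i$ but $j \not<_Q i$, then $i,j$ must lie in distinct blocks $\Phi_a, \Phi_b$; the case $b < a$ would make $j$ betray $i \in Q$, contradicting $i \notin B(\Phi)$, so $a < b$ and $i <_\Phi j$. Condition~\eqref{no-stabby:2} is analogous; the one added wrinkle is ruling out $i =_\Phi j$, which follows because any betrayer of $j$ (which must exist since $j \in P\sm Q = B(\Phi)$) would also betray $i$ by transitivity of $<_P$. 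Condition~\eqref{stabby} is exactly the inclusion $P\sm Q \subseteq B(\Phi)$.

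For the backward direction, assume \eqref{stick-together}--\eqref{stabby}. I first establish $B(\Phi) = P\sm Q$. The inclusion $\supseteq$ is~\eqref{stabby}; for $\subseteq$, given $i \in Q$ and a putative betrayer $k$ with $k <_P i$ and $k <_\Phi i$, I split into three cases ($k \in Q$ with $k <_Q i$; $k \in Q$ with $k \not<_Q i$; $k \in P\sm Q$) and apply~\eqref{stick-together}, \eqref{no-stabby:1}, \eqref{no-stabby:2} respectively to contradict $k <_\Phi i$. Once $B(\Phi) = P\sm Q$, the ground set of the fracturing produced by $\Phi$ equals $Q$. The equality of order relations on $Q$ is then verified using~\eqref{stick-together} (so that $j <_Q i$ places $i,j$ in the same block, making the relation appear in the fracturing) and~\eqref{no-stabby:1} (so that no spurious relations appear among same-block $Q$-elements).

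The main obstacle is the trichotomous case analysis in the backward direction's proof of $B(\Phi) \subseteq P\sm Q$: one must carefully pair each of the three possible locations of a putative betrayer against exactly one of conditions~\eqref{stick-together}--\eqref{no-stabby:2}. This is also the structural reason that three ``no-stabbing'' conditions appear on the $Q$-side of the picture, while only the single condition~\eqref{stabby} is needed to control $P \sm Q$.
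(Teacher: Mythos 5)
Your proof is correct and follows essentially the same strategy as the paper: both directions reduce to showing $B(\Phi)=P\sm Q$ and then matching order relations block by block, with the forward direction using Proposition~\ref{LOIcprod} and~\eqref{who-is-betrayed}, and the backward direction handling the inclusion $B(\Phi)\subseteq P\sm Q$ by a case split on the putative betrayer. In fact, your transitivity observation for ruling out $i=_\Phi j$ in~\eqref{no-stabby:2} fills in a detail the paper leaves implicit, so the argument is sound and, if anything, slightly more explicit than the original.
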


\begin{proof}
Suppose $\Phi\in \Supp(Q)$.  By Proposition~\ref{LOIcprod}, each component of $Q$ is contained in some block of $\Phi$, implying~\eqref{stick-together}.  By \eqref{who-is-betrayed}, $B(\Phi)\supseteq P\sm Q$, which is equivalent to~\eqref{stabby}; and $B(\Phi)\subseteq P\sm Q$, which implies~\eqref{no-stabby:1} and~\eqref{no-stabby:2}.

Conversely, suppose that $\Phi\compn I$ satisfies \eqref{stick-together}--\eqref{stabby}.  Let $Q'$ be the good fracturing of $P$ such that $\Phi\in \Supp(Q')$.  
First, we claim that $B(\Phi)=P\sm Q$.  The inclusion $B(\Phi)\supseteq P\sm Q$ is just~\eqref{stabby}.  For the reverse inclusion, if $i\in B(\Phi)\cap Q$, then there exists $j\in P$ such that $j<_Pi$ and $j<_\Phi i$.  If $j\in P\sm Q$ then~\eqref{no-stabby:2} fails, while if $j\in Q$ then~\eqref{no-stabby:1} implies $j<_Qi$, but then $i=_\Phi j$ by~\eqref{stick-together}, a contradiction, so the claim is proved.  In particular, $Q'=Q$ as sets.  By~\eqref{stick-together}, every relation in $Q$ is a relation in $Q'$; conversely, if $j<_{Q'}i$, then $j<_Pi$ and $j=_\Phi i$, so~\eqref{no-stabby:1} implies $j<_Qi$.  Therefore, $Q=Q'$.
\end{proof}

By condition~\eqref{stick-together}, $\gr{\Supp(Q)}$ is contained in the subspace $V_Q\subset\Rr^n$ defined by equalities $x_i=x_j$ whenever $i,j$ belong to the same component of $Q$.  We have $\dim V_Q=u+k$, where $u$ is the number of components of~$Q$ and $k=|P\sm Q|$.  Condition~\eqref{stabby} gives rise to a disjunction of linear inequalities rather than a conjunction, which is why $\gr{\Supp(Q)}$ need not be convex (see Example~\ref{ex:XQ}).  Accordingly, our next step is to express $\gr{\Supp(Q)}$ as a union of convex fans.

\begin{definition}\label{betrayal-function}
Let $Q$ be a fracturing of $P$.  A \defterm{betrayal function} is a map $\beta:P\sm Q\to P$ such that $\beta(b) <_P b$ for every $b\in P\sm Q$. Observe that $Q$ has a betrayal function if and only if $Q\supseteq\Min(P)$.  Let
\[\Supp_\beta(Q)=\{\Phi\in \Supp(Q):\ \beta(b) <_\Phi b \ \ \forall b\in P\sm Q\}.\]
Applying Proposition~\ref{ethos}, we  see that $\Supp_\beta(Q)$ consists of set compositions $\Phi\in \Supp(Q)$ satisfying~\eqref{stick-together}, \eqref{no-stabby:1}, \eqref{no-stabby:2}, and
\begin{equation} \label{XbetaQ}
\forall b\in P\sm Q:\ \ \beta(b) <_\Phi b.
\end{equation}
\end{definition}

Observe that $\gr{\Supp_\beta(Q)}$ is a \emph{convex} subfan of the braid arrangement for every $\beta$, because for each $b\in P\sm Q$, the disjunction~\eqref{stabby} has been replaced by a single inequality.  Moreover, $\Supp(Q)=\bigcup_\beta \Supp_\beta(Q)$, though in general this is not a disjoint union.

\begin{proposition} \label{topology-of-Xbeta}
Let $Q$ be a good fracturing of $P$ and $\beta$ a betrayal function for $Q$.  Then $\gr{\Supp_\beta(Q)}$ is homeomorphic to $\Rr^{u+k}$, where $u$ is the number of components of $Q$ and $k=|P\sm Q|$.
\end{proposition}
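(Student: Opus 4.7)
The plan is to identify $\gr{\Supp_\beta(Q)}$ with the geometric realization of an auxiliary preposet on $I$, after which the homeomorphism follows from standard convexity. Specifically, I would introduce a preposet $R_\beta$ on $I$ whose equivalence classes are the components of $Q$ together with singletons $\{b\}$ for each $b\in P\sm Q$, and whose strict relations $\prec$ are generated by the three families of conditions from Proposition~\ref{ethos}: (a) $i\prec j$ whenever $i,j\in Q$ lie in distinct components with $j<_P i$ (reflecting~\eqref{no-stabby:1}); (b) $i\prec j$ whenever $i\in Q$, $j\in P\sm Q$, and $j<_P i$ (from~\eqref{no-stabby:2}); and (c) $\beta(b)\prec b$ for each $b\in P\sm Q$ (the $\beta$-constraint~\eqref{XbetaQ}). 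Assuming $R_\beta$ is well defined, Proposition~\ref{ethos} identifies $\Supp_\beta(Q)$ as exactly the set of linear extensions of $R_\beta$, whence the underlying point set $\bigcup_{\Phi\in\Supp_\beta(Q)}\gr{\Phi}$ of $\gr{\Supp_\beta(Q)}$ coincides with the open geometric realization $\gr{R_\beta}\subset\Rr^n$.

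The hard part is verifying class-level acyclicity of $R_\beta$. Here I would use the following structural observation: the target of every relation of type (b) or (c) is a singleton $\{b\}$ for some $b\in P\sm Q$, and the only outgoing $R_\beta$-edges from such a singleton are themselves of type (c), landing in another $P\sm Q$-singleton. Consequently, any class-level cycle must lie entirely within the $Q$-components (using only type-(a) edges) or entirely within the $P\sm Q$-singletons. A cycle of the second kind would yield a strictly $P$-increasing loop $b_0<_P b_1<_P\cdots<_P b_m=b_0$, which is absurd. For a cycle of the first kind, I would pick any $\Phi\in\Supp(Q)$---existing precisely because $Q$ is good---and combine conditions~\eqref{stick-together} and~\eqref{no-stabby:1}: the former equates all representatives within each $Q$-component under $=_\Phi$, while the latter forces each cycle-edge to contribute a strict $<_\Phi$-step, producing a strict $<_\Phi$-chain that closes on itself, a contradiction.

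Once acyclicity is established, a topological sort of the class poset produces a linear extension of $R_\beta$, giving a set composition $\Phi\in\Supp_\beta(Q)$ and ensuring $\gr{R_\beta}\neq\emptyset$. By construction, $\gr{R_\beta}$ is the subset of $\Rr^n$ defined by the equalities $x_i=x_j$ for $i,j$ in a common $Q$-component together with the strict inequalities recorded in $R_\beta$. The equalities carve out the affine subspace $V_Q$ of dimension $u+k$---one coordinate for each $Q$-component and one for each element of $P\sm Q$---and the strict inequalities cut out a relatively open, convex, nonempty, and therefore full-dimensional subset of $V_Q$. Any such subset of a $(u+k)$-dimensional affine space is homeomorphic to $\Rr^{u+k}$, completing the proof.
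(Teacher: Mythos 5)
Your proof is correct and follows essentially the same route as the paper's: both identify $\gr{\Supp_\beta(Q)}$ as the set cut out by the equalities from~\eqref{stick-together} (giving the affine hull $V_Q$ of dimension $u+k$) together with the strict inequalities from~\eqref{no-stabby:1}, \eqref{no-stabby:2}, and~\eqref{XbetaQ}, then invoke the fact that a nonempty open convex subset of a $(u+k)$-dimensional affine space is homeomorphic to $\Rr^{u+k}$. The paper's own proof of this proposition is terser --- it does not explicitly verify nonemptiness, instead deferring that work (via the construction of the preposet $O_\beta(Q)$ and a linear extension of it) to the later proof of Proposition~\ref{classify-good-fracturings}; your version packages the preposet $R_\beta$, the class-level acyclicity check, and the existence of a linear extension directly into this proof, which makes the nonemptiness hypothesis of the convexity lemma visibly satisfied rather than tacit.
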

\begin{proof}
The affine hull of $\gr{\Supp_\beta(Q)}$ is defined by the linear equalities \eqref{stick-together}, hence has one degree of freedom for each component of $Q$ and each element of $P\sm Q$.  The inequalities given by \eqref{no-stabby:1}, \eqref{no-stabby:2}, and~\eqref{XbetaQ} define $\gr{\Supp_\beta(Q)}$ as a convex open subset of its affine hull.  The conclusion follows by \cite{Geschke}.
\end{proof}

\begin{example} \label{ex:XbQ}
Recall the poset $P$ and good fracturing $Q$ of Example~\ref{ex:XQ}.  In Figure~\ref{fig:landing-in-oz}, each face of $\Supp(Q)$ is colored \Green{green}, \Blue{blue}, or \Red{red}, depending on whether $3\in P$ is betrayed by \Green{only by 1}, \Blue{only by 2}, or \Red{by both 1 and 2}.  There are two betrayal functions $\beta_1,\beta_2:\{3\}\to\{1,2\}$, given by $\beta_i(3) = i$.  Thus $\gr{\Supp_{\beta_1}(Q)}$ is the subfan consisting of the green and red faces, and $\gr{\Supp_{\beta_2}(Q)}$ consists of the blue and red faces.  Observe that both subfans are convex.
 
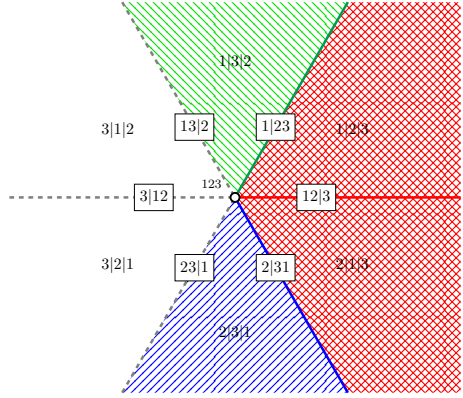
\begin{figure}[H]
\centering
\scalebox{0.6}{
\begin{tikzpicture}
\newcommand{\linlength}{5}
\newcommand{\raypos}{1.8}
\newcommand{\regpos}{3}
\fill[pattern color=red, pattern=crosshatch] (60:\linlength)--(0:0)--(300:\linlength)--(5,-4.3301)--(5,4.3301);
\fill[pattern color=ForestGreen, pattern=north west lines] (60:\linlength)--(0:0)--(120:\linlength);
\fill[pattern color=blue, pattern=north east lines] (240:\linlength)--(0:0)--(300:\linlength);
\draw[ultra thick,red] (0:\linlength)--(0:0);
\draw[ultra thick,ForestGreen] (60:\linlength)--(60:0);
\draw[ultra thick,blue] (300:\linlength)--(300:0);
\draw[ultra thick, gray, dashed] (240:\linlength)--(240:0);
\draw[ultra thick, gray, dashed] (180:\linlength)--(180:0);
\draw[ultra thick, gray, dashed] (120:\linlength)--(120:0);
\foreach \a/\lab in {0/$12|3$, 60/$1|23$, 120/$13|2$, 180/$3|12$, 240/$23|1$, 300/$2|31$}
	\node[draw,fill=white] at (\a:\raypos) {\lab};
\foreach \a/\lab in {30/$1|2|3$, 90/$1|3|2$, 150/$3|1|2$, 210/$3|2|1$, 270/$2|3|1$, 330/$2|1|3$}
	\node at (\a:\regpos) {\lab};
\draw[very thick,fill=white] (0,0) circle (.1);
\node at (150:.6) {\footnotesize$123$};
\end{tikzpicture}}
\caption{A geometric realization of a support system.\label{fig:landing-in-oz}}
\end{figure}
\end{example}

In order to understand the support of $\anti(P)$, we need to know which fracturings are good.  The following definition and proposition give a usable criterion for goodness, together with a way of constructing explicit elements of $\Supp(Q)$ for a fracturing $Q$.

\begin{definition}\label{def:conflict-digraph}
Let $Q$ be a fracturing of $P$ with (Hasse) components $Q_1,\dots,Q_u$.  The \defterm{conflict digraph} $\Con_P(Q)$ of $Q$ is the directed graph with vertices $Q_1,\dots,Q_u$ and edges
\[
\{Q_i \to Q_j\colon i\neq j;\ \exists\; x\in Q_i,\ y\in Q_j \text{  such that } y<_P x\}.
\]
\end{definition}

\begin{example}
Consider the poset $P=\{1<2<3,4<5<6,1<4,2<5,3<6\}$ and the fracturing whose components are the induced subposets $Q_1 = \{4\}$, $Q_2 = \{1<2<5\}$, $Q_3= \{3<6\}$.  Then $\Con_P(Q)$ is the digraph with vertices $Q_i$ and edges $Q_1 \to Q_3$, $Q_2\to Q_1$, $Q_3 \to Q_2$, and $Q_3 \to Q_1$.  (See Figure~\ref{fig:frac-ex}.)

\begin{figure}[H]
\centering
\scalebox{0.8}{
\begin{tikzpicture}
\draw [thick] (0,0) -- (2,2) -- (3,1) -- (1,-1) -- (0,0);
\draw [thick] (1,1) -- (2,0);

\node[fill=red!20] at (1,-1) {1};
\node[fill=red!20] at (2, 0) {2};
\node[fill=blue!20] at (3, 1) {3};
\node[fill=ForestGreen!20] at (0, 0) {4};
\node[fill=red!20] at (1, 1) {5};
\node[fill=blue!20] at (2, 2) {6};
\draw[thick, <-] (5.5,-1.15) -- (8.5,-1.15);
\draw[thick, ->] (5.5,-.85) -- (8.5,-.85);
\draw[thick, ->] (6.7,0.65) -- (5.2,-0.65);
\draw[thick, ->] (7.3,0.65) -- (8.8,-0.65);
\node[fill=ForestGreen!20] at (5, -1) {$Q_1$};
\node[fill=red!20] at (9,-1) {$Q_2$};
\node[fill=blue!20] at (7,1) {$Q_3$};
\end{tikzpicture}}
\caption{$P$ and $\Con_P(Q)$.\label{fig:frac-ex}}
\end{figure}

Observe that $Q$ cannot be a good fracturing.  For any set composition $\Phi\compn[6]$ satisfying~\eqref{stick-together}, either 4 betrays 5, or 1 betrays 4, or $\{1,2,4,5\}$ is contained in some block of $\Phi$, in which case $\mu_\Phi(\Delta_\Phi(P))$ includes the relations $1<4<5$.  This obstruction to goodness is captured by the antiparallel edges between $Q_1$ and $Q_2$ in $\Con_P(Q)$.
\end{example}

In fact, cycles in $\Con_P(Q)$ form obstructions to goodness, as we now explain.
  
\begin{definition}\label{def:acyclic-fracturing}
Let $P$ be a poset.  An \defterm{acyclic fracturing} $Q$ of $P$ is a fracturing of $P$ such that the conflict digraph $\Con_P(Q)$ is acyclic.  Let $\Acyc(P)$ denote the set of all acyclic fracturings of $P$.
\end{definition}

Equivalently, a fracturing is acyclic if its Hasse components can be ordered $Q_1,\dots,Q_u$ such that $Q_i$ contains an element less than an element of $Q_j$ only if $i<j$.

\begin{proposition} \label{classify-good-fracturings}
Let $Q$ be a fracturing of $P$ with components $Q_1,\dots,Q_u$. Then $Q$ is a good fracturing if and only if $Q\supseteq\Min(P)$ and $Q$ is an acyclic fracturing.
\end{proposition}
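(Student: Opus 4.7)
The plan is to prove the two directions separately. The forward direction is a direct obstruction argument using Proposition~\ref{ethos}, while the converse requires explicit construction of a witness set composition in $\Supp(Q)$.

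Suppose first that $Q$ is good, and pick any $\Phi\in\Supp(Q)$. Definition~\ref{defn:betrayal} gives $B(\Phi)\cap\Min(P)=\0$, while~\eqref{who-is-betrayed} gives $B(\Phi)=P\sm Q$; hence $\Min(P)\subseteq Q$. For acyclicity, suppose for contradiction that $\Con(Q)$ contains a directed cycle $Q_{i_1}\to Q_{i_2}\to\cdots\to Q_{i_k}\to Q_{i_1}$. Each edge $Q_{i_m}\to Q_{i_{m+1}}$ supplies $x_m\in Q_{i_m}$ and $y_m\in Q_{i_{m+1}}$ with $y_m<_P x_m$; since $x_m, y_m$ lie in distinct Hasse components, $y_m\not<_Q x_m$. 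Condition~\eqref{no-stabby:1} of Proposition~\ref{ethos} then forces $x_m<_\Phi y_m$, so the $\Phi$-block containing $Q_{i_m}$ strictly precedes the one containing $Q_{i_{m+1}}$. Chaining these strict inequalities around the cycle is impossible.

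For the converse, assume $Q\supseteq\Min(P)$ and $\Con(Q)$ is acyclic. Label the Hasse components $Q_1,\dots,Q_u$ in topological order, so that every edge $Q_i\to Q_j$ of $\Con(Q)$ satisfies $i<j$. I claim that the set composition
\[
\Phi \,:=\, Q_1\mid Q_2\mid\cdots\mid Q_u\mid(P\sm Q)
\]
(omitting the final block if $P=Q$) lies in $\Supp(Q)$, which suffices. Condition~\eqref{stick-together} is automatic, since each component $Q_a$ occupies a single block of~$\Phi$. For~\eqref{no-stabby:1}: if $y<_P x$ with $x\in Q_i,\ y\in Q_j,\ i\neq j$, then $\Con(Q)$ has the edge $Q_i\to Q_j$, so $i<j$, placing $x$'s block before $y$'s. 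Condition~\eqref{no-stabby:2} holds because any element of $Q$ lies in an earlier block of $\Phi$ than any element of $P\sm Q$. Finally, for~\eqref{stabby}: given $b\in P\sm Q$, the hypothesis $\Min(P)\subseteq Q$ forces $b\notin\Min(P)$, so any $a\in\Min(P)$ with $a<_P b$ lies in $Q$ and therefore occupies an earlier block of $\Phi$ than $b$.

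The main obstacle is the explicit construction in the converse. The key design choice is to bundle all of $P\sm Q$ into a single terminal block: this dispatches conditions~\eqref{no-stabby:2} and~\eqref{stabby} simultaneously (the latter exploiting $\Min(P)\subseteq Q$), while the topological ordering of $\Con(Q)$ is tailor-made for~\eqref{no-stabby:1}. No subtler witness is required.
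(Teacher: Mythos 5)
Your proof is correct. The forward direction is a small but genuine improvement over the paper's argument: where the paper derives only the weak chain $r_1\ge r_2\ge\cdots\ge r_s\ge r_1$ from ``$x_j$ does not betray $y_{j+1}$'' and then needs a second step (via Proposition~\ref{LOIcprod}) to turn the resulting equalities into a contradiction, you invoke condition~\eqref{no-stabby:1} of Proposition~\ref{ethos} directly and get a \emph{strict} decrease of block indices around the cycle, which is already a contradiction. That is a legitimate use of Proposition~\ref{ethos}, which is stated and proved before this proposition.

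The converse is where your route genuinely diverges. The paper fixes a betrayal function $\beta$, builds the preposet $O_\beta(Q)$ whose equivalence classes are $Q_1,\dots,Q_u$ together with the \emph{singletons} of $P\sm Q$, takes a linear extension, and verifies the required identity $\mu_\Phi\Delta_\Phi(J(P))=J(Q)$ directly; as a by-product it obtains the stronger fact that $\Phi\in\Supp_\beta(Q)$, i.e.\ that each betrayal function's subfan is nonempty. You instead take the topological ordering $Q_1|\cdots|Q_u$ and bundle \emph{all} of $P\sm Q$ into one terminal block, then check the four conditions of Proposition~\ref{ethos}. Your witness is simpler and the verification is shorter; the price is that you do not get the finer information about $\Supp_\beta(Q)$ for a specified $\beta$, though that is not needed for the proposition as stated. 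One small point worth making explicit: Proposition~\ref{ethos} is phrased with the hypothesis ``let $Q$ be a good fracturing,'' whereas here you are trying to prove goodness. The application is nonetheless sound --- the ``if'' direction of that proposition's proof never uses goodness of $Q$; it produces the good fracturing $Q'$ with $\Phi\in\Supp(Q')$ and shows $Q=Q'$, so exhibiting a $\Phi$ satisfying~\eqref{stick-together}--\eqref{stabby} automatically yields $\Supp(Q)\ne\0$ --- but a sentence acknowledging this would close the apparent circularity.
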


\begin{proof}
($\implies$) Suppose that $Q$ is a good fracturing, and let $\Phi\in \Supp(Q)$.  We have observed in Definition~\ref{fracturing} that $Q\supset\Min(P)$.  Now, suppose that $\Con_P(Q)$ contains a cycle, which we may take to be $Q_1\rightarrow\cdots\rightarrow Q_s\rightarrow Q_1$.  Then there are elements $x_1,\dots,x_s,y_1,\dots,y_s$ of $Q$, with $x_j,y_j\in Q_j$ and $x_j<_Py_{j+1}$ for all $j$ (taking indices mod~$s$).  For each $j$, since $x_j$ does not betray $y_{j+1}$, it follows that $Q_j \subseteq \Phi_{r_j}$ and $Q_{j+1} \subseteq \Phi_{r_{j+1}}$, where $r_{j+1} \leq r_j \leq m$.  But then $r_1 \geq r_2 \geq \dots \geq r_m \geq r_1$, 
so all the $r_i$ are equal (say to $r$) and $Q_1\cup\cdots\cup Q_s\subset\Phi_r$.  By Proposition~\ref{LOIcprod}, $x_j <_Q y_{j+1}$ for all $j$, but then the $Q_i$ are in fact identical.  So the cycle is a self-loop, which is prohibited in the construction of $\Con_P(Q)$.  We conclude that $\Con_P(Q)$ is acyclic and thus $Q$ is an acyclic fracturing.
\medskip

($\impliedby$) Recall the definitions and properties of preposets from \S\ref{sec:posets}.

Suppose that $Q\supset\Min(P)$ and that $\Con_P(Q)$ is acyclic. As observed in Definition~\ref{betrayal-function}, the first assumption implies that $Q$ admits a betrayal function $\beta$.  Acyclicity of $\Con_P(Q)$ implies that there is a well-defined preposet $O_\beta(Q)$ on $I$ with equivalence classes $\{Q_1,\dots,Q_u\}\cup (P\sm Q)$, generated by the relations
\begin{subequations}
\begin{align}
Q_j < b & \quad\text{whenever } \beta(b)\in Q_j; \label{conflict-leaf}\\
Q_i < Q_j & \quad\text{whenever } Q_i \rightarrow Q_j \text{ is an edge in }\Con_P(Q).\label{conflict-core}
\end{align}
\end{subequations}
Let $\Phi$ be a linear extension of $O_\beta(Q)$.  That is, $\Phi$ is a linear preposet with the same equivalence classes that contains the relations ~\eqref{conflict-leaf} and~\eqref{conflict-core}.  
The conditions~\eqref{conflict-leaf} imply that $P\sm Q\subseteq B(\Phi)$, while the conditions~\eqref{conflict-core} imply the reverse inclusion. Moreover, the posets $P_i$ of Proposition~\ref{LOIcprod} are precisely the components of $Q$.  Therefore $\mu_\Phi(\Delta_\Phi(P))=Q$, i.e., $\Phi\in \Supp(Q)$, and now~\eqref{conflict-leaf} implies that in fact
$\Phi \in \Supp_\beta(Q)$.
\end{proof}

\begin{remark}\label{convexity}
Each component $Q'$ of a good fracturing $Q$ of $P$ has a notable property: if $x,y\in Q'$, $z\in P$, and $x\leq_P z\leq_P y$, then $z\in Q'$.
(In poset terminology, $Q'$ is interval-closed as a subposet of $(Q,\leq_P)$.)
Indeed, let $\beta$ be a betrayal function and $\Phi$ a linear extension of $O_\beta(Q)$.  If $z \not\in B(\Phi)$, then let $\Phi_i$ be the block of $\Phi$ containing $z$ and $\Phi_j$ be the block of $\Phi$ containing $x$ and $y$.  It is not the case that $i<j$ (when $z$ betrays $y$) or that $i>j$ (when $x$ betrays $z$), so $i=j$, and it follows that $z \in Q_j$.
\end{remark}

\begin{proposition}\label{intersection-happiness}
Let $P$ be a poset, let $Q\in\Good(P)$, and let $\mathcal{B}$ be the collection of all betrayal functions $\beta$ such that $\Supp_\beta(Q)\neq\0$. Then $\underset{\beta \in \mathcal{B}}{\bigcap} \Supp_\beta(Q) \neq \0$.
\end{proposition}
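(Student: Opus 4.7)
My plan is to exhibit a single $\Phi$ lying in $\Supp_\beta(Q)$ for every $\beta\in\mathcal{B}$ simultaneously. The central idea is to build an \emph{amalgamated} preposet $O^*(Q)$ on $I$ whose linear extensions encode the constraints imposed by every $\beta\in\mathcal{B}$ at once. Specifically, $O^*(Q)$ will have the same equivalence classes as each $O_\beta(Q)$ of Proposition~\ref{classify-good-fracturings}, generated by the relations $Q_i<Q_j$ for each edge $Q_i\to Q_j$ of $\Con(Q)$ together with the ``union of betrayal relations'' $a<b$ for each $b\in P\sm Q$ and each $a$ in the set
\[
A(b):=\{\beta(b):\beta\in\mathcal{B}\}.
\]

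First I would verify acyclicity of $O^*(Q)$. The key observation is that no directed edge of $O^*(Q)$ goes from an element of $P\sm Q$ back into $Q$: the only outgoing edges from $b\in P\sm Q$ are of the form $b<c$ with $c\in P\sm Q$ (arising when $b\in A(c)$). Consequently, any directed cycle in $O^*(Q)$ must lie entirely within $\{Q_1,\dots,Q_u\}$, in which case it would produce a cycle in $\Con(Q)$ (impossible since $Q\in\Good(P)$), or entirely within $P\sm Q$, in which case each $A$-edge $b<c$ forces $b<_Pc$ and a cycle would contradict antisymmetry of $\leq_P$.

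Now let $\Phi$ be any linear extension of $O^*(Q)$. For every $\beta\in\mathcal{B}$ and $b\in P\sm Q$, the inclusion $\beta(b)\in A(b)$ forces $\beta(b)<_\Phi b$, giving condition~\eqref{XbetaQ}, so it remains to show $\Phi\in\Supp(Q)$. Of the four conditions in Proposition~\ref{ethos}: \eqref{stick-together} is immediate from the equivalence-class structure; \eqref{no-stabby:1} follows from the $\Con(Q)$-relations exactly as in the backward direction of Proposition~\ref{classify-good-fracturings}; and \eqref{stabby} holds because $\mathcal{B}$, and hence each $A(b)$, is nonempty.

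The main obstacle will be condition~\eqref{no-stabby:2}, which was not explicitly built into $O^*(Q)$ as a generating relation. To handle it I would prove the auxiliary lemma that $A(j)\cap Q\neq\emptyset$ for every $j\in P\sm Q$: define $\beta^\star(b)$ to be some minimum of $P$ below $b$, which exists and lies in $Q$ since $Q\supseteq\Min(P)$; the associated preposet $O_{\beta^\star}(Q)$ has no outgoing edges from $P\sm Q$, hence is acyclic by the argument above, so $\beta^\star\in\mathcal{B}$ and $\beta^\star(j)\in A(j)\cap Q$. Granted the lemma, let $i\in Q_c$ and $j\in P\sm Q$ satisfy $j<_P i$, and pick $a\in A(j)\cap Q_k$. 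The chain $a<_Pj<_Pi$ produces (when $k\neq c$) an edge $Q_c\to Q_k$ in $\Con(Q)$, so by transitivity $Q_c<_{O^*}Q_k<_{O^*}j$, while the case $k=c$ yields $Q_c<_{O^*}j$ directly; either way $i<_\Phi j$, as required.
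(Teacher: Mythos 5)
Your argument is correct and essentially matches the paper's: both build an amalgamated acyclic digraph/preposet on the Hasse components of $Q$ together with the elements of $P\setminus Q$, prove acyclicity via the same tripartite decomposition (no edges from $P\setminus Q$ back into the components), and take a linear extension to obtain the desired $\Phi$. The only substantive difference is that the paper's digraph $D$ uses \emph{all} potential betrayers as edges (every $x<_P b$ with $b\in P\setminus Q$, packaged as $E_2\cup E_3$), rather than just the image sets $A(b)$, which is what makes your auxiliary lemma $A(j)\cap Q\neq\emptyset$ necessary in your version but not in theirs.
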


\begin{proof}
Let $D$ be the digraph $(V_1\cup V_2,E_1\cup E_2\cup E_3)$, where $V_1=\{Q_1,\dots,Q_u\}$ and $V_2=P\sm Q$, and
\begin{align*}
E_1 &= \{Q_i \rightarrow Q_j:\ \text{$i\neq j$ and there exist $x\in Q_i$ and $y\in Q_j$ such that $y<_P x$}\},\\
E_2 &= \{b \rightarrow b':\ b,b'\in P\sm Q \text{ and } b <_P b'\},\\
E_3 &= \{Q_j \rightarrow b:\ b\in P\sm Q \text{ and there exists $x \in Q_j$ such that $x <_P b$}\}.
\end{align*}
It suffices to show that $D$ is acyclic, for then every linear extension of $D$ belongs to $\underset{\beta \in \mathcal{B}}{\bigcap} \Supp_\beta(Q)$.  Indeed, the subdigraph $(V_1,E_1)$ is just the conflict digraph $\Con_P(Q)$, which is acyclic by Proposition~\ref{classify-good-fracturings}, and the subdigraph $(V_2,E_2)$, is the transitive closure of the poset $P\sm Q$.  Moreover, every edge in $E_3$ points from $V_1$ to $V_2$.  Thus $D$ is acyclic as desired.
\end{proof}

\begin{theorem}\label{thm:antipode-LOI}
Let $P$ be a finite poset on ground set~$I$.  Then the antipode of $J(P)$ in $\LOI[I]$ is given by the following cancellation-free and grouping-free formula:
\[
\anti(J(P),I) = \sum_{Q\in\Good(P)} (-1)^{c(Q)+|P\sm Q|} (J(Q),I)
\]
where $c(Q)$ is the number of components of $Q$.
\end{theorem}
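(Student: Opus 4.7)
The plan is to combine equation~\eqref{antipode-XQ} with the topological machinery developed in Propositions~\ref{ethos}, \ref{topology-of-Xbeta}, and \ref{intersection-happiness}. Equation~\eqref{antipode-XQ} expresses the antipode as $\sum_{Q\in\Good(P)} c_Q\, J(Q)$ with $c_Q = \sum_{\Phi\in\Supp(Q)}(-1)^{|\Phi|}$, so the theorem reduces to showing $c_Q = (-1)^{u+k}$ for every good fracturing $Q$, where $u=c(Q)$ and $k=|P\sm Q|$.

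The first step is to interpret $c_Q$ as a compactly supported Euler characteristic. Each open cone $\gr{\Phi}\subset\Rr^n$ is a non-empty relatively open convex set of dimension $|\Phi|$, hence homeomorphic to $\Rr^{|\Phi|}$, so $\chi_c(\gr{\Phi})=(-1)^{|\Phi|}$. The cones $\gr{\Phi}$ for distinct $\Phi\in\Supp(Q)$ are pairwise disjoint locally closed subspaces of $V_Q$, so additivity of $\chi_c$ on locally closed stratifications gives $c_Q=\chi_c(\gr{\Supp(Q)})$.

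Next I would exploit the decomposition $\gr{\Supp(Q)}=\bigcup_{\beta\in\mathcal{B}}\gr{\Supp_\beta(Q)}$, where $\mathcal{B}$ is the (finite, non-empty) set of betrayal functions $\beta$ with $\Supp_\beta(Q)\neq\emptyset$. Proposition~\ref{topology-of-Xbeta} identifies each $\gr{\Supp_\beta(Q)}$ with a non-empty open convex subset of the affine subspace $V_Q\cong\Rr^{u+k}$, homeomorphic to $\Rr^{u+k}$ and hence with $\chi_c=(-1)^{u+k}$. The same description applies verbatim to any intersection $\bigcap_{\beta\in\mathcal{B}'}\gr{\Supp_\beta(Q)}$ for $\emptyset\neq\mathcal{B}'\subseteq\mathcal{B}$: adjoining further strict inequalities of the form $\beta(b)<_\Phi b$ preserves convexity and relative openness in $V_Q$; the intersection is non-empty because it contains $\bigcap_{\beta\in\mathcal{B}}\gr{\Supp_\beta(Q)}$, which is non-empty by Proposition~\ref{intersection-happiness}; and a non-empty open subset of $V_Q\cong\Rr^{u+k}$ automatically has full dimension, so the same citation invoked in the proof of Proposition~\ref{topology-of-Xbeta} yields a homeomorphism with $\Rr^{u+k}$, giving $\chi_c=(-1)^{u+k}$.

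Finally, inclusion-exclusion for $\chi_c$ on the open cover $\{\gr{\Supp_\beta(Q)}\}_{\beta\in\mathcal{B}}$, combined with the elementary identity $\sum_{j=1}^{n}\binom{n}{j}(-1)^{j+1}=1$ (applied with $n=|\mathcal{B}|$), gives
\[
c_Q \;=\; \sum_{\emptyset\neq\mathcal{B}'\subseteq\mathcal{B}}(-1)^{|\mathcal{B}'|+1}(-1)^{u+k} \;=\; (-1)^{u+k},
\]
as required. I expect the main subtlety to lie in Step~2 — namely, checking that the union of cones $\gr{\Supp_\beta(Q)}$ actually coincides with the open convex region cut out in $V_Q$ by the strict inequalities~\eqref{no-stabby:1},~\eqref{no-stabby:2}, and~\eqref{XbetaQ}, and that this description carries through uniformly to every non-empty intersection — rather than in the Euler characteristic inclusion-exclusion bookkeeping of Step~3.
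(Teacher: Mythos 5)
Your argument is correct and is in substance the same as the paper's: interpret $c_Q$ as an Euler characteristic of the (generally non-convex) fan $\gr{\Supp(Q)}$, cover it by the convex open pieces $\gr{\Supp_\beta(Q)}$ via Proposition~\ref{topology-of-Xbeta}, and run inclusion-exclusion, with each non-empty intersection contributing $(-1)^{u+k}$. The only differences are bookkeeping: you compute compactly supported Euler characteristics directly on the open fan, whereas the paper intersects with $\Ss^{n-2}$ and computes reduced Euler characteristics of closed-ball/boundary-sphere pairs; and you invoke Proposition~\ref{intersection-happiness} explicitly for non-emptiness of the intersections $Y_{\mathcal{A}}$, a fact the paper relies on only implicitly when asserting each $\gr{Y_{\mathcal{A}}}$ is homeomorphic to $\Rr^{u+k}$.
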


\begin{proof}
Recall from~\eqref{antipode-XQ} that
\[\anti(J(P)) ~=~ \sum_{Q\in\Good(P)} \varepsilon_Q J(Q)\]
where
\[\varepsilon_Q=\sum_{\Phi\in \Supp(Q)} (-1)^{|\Phi|}.\]
Fix a good fracturing $Q$, and abbreviate $u=c(Q)$ and $k=|P\sm Q|$.  Recall that $\Supp(Q) = \bigcup\limits_{\beta \in \mathcal{B}} \Supp_\beta(Q)$, where $\mathcal{B}$ is the set of all betrayal functions for $Q$.  For $\mathcal{A}\subseteq\mathcal{B}$, define
\[Y_\mathcal{A}=Y_\mathcal{A,Q}=\bigcap\limits_{\beta\in\mathcal{A}}\Supp_\beta(Q).\]
By inclusion-exclusion, we have
\begin{equation} \label{cQ:1}
\varepsilon_Q = \sum_{\0\neq\mathcal{A}\subseteq\mathcal{B}}(-1)^{|\mathcal{A}|+1} \sum_{\Phi\in Y_\mathcal{A}} (-1)^{|\Phi|}.
\end{equation}
As in the proof of \cite[Thm.~1.6.1]{AA}, the inner sum can be interpreted as the reduced Euler characteristic of $\gr{Y_\mathcal{A}}$ as a relative polyhedral complex (or equivalently of $\gr{Y_\mathcal{A}}\cap\Ss^{n-2}$ as a relative simplicial complex; see \S\ref{sec:posets}).
Since each $\gr{\Supp_\beta(Q)}$ is open, convex, and homeomorphic to $\Rr^{u+k}$ (by Proposition~\ref{topology-of-Xbeta}), so is their intersection $\gr{Y_\mathcal{A}}$.  Thus $\gr{Y_\mathcal{A}}\cap\Ss^{n-2}$ is homeomorphic to an open ball of dimension $u+k-2$, and thus
\[\sum_{\Phi\in Y_\mathcal{A}} (-1)^{|\Phi|}
= \tilde\chi(\overline{Y_\mathcal{A}}) - \tilde\chi(\partial Y_\mathcal{A})
= \tilde\chi(\mathbb{B}^{u+k})-\tilde\chi(\mathbb{S}^{u+k-1})
= 0 - (-1)^{u+k-1}
= (-1)^{u+k}.
\]
Substituting into \eqref{cQ:1}, we obtain
\[\varepsilon_Q
= \sum_{\0\neq\mathcal{A}\subseteq\mathcal{B}}(-1)^{|\mathcal{A}|+1+u+k}\\
= (-1)^{u+k+1}\sum_{\0\neq\mathcal{A}\subseteq\mathcal{B}}(-1)^{|\mathcal{A}|}\\
= (-1)^{u+k}
\]
which establishes the desired formula for $\anti(J(P))$.
\end{proof}

\begin{corollary}\label{cor:dual-poset-antipode}
Let $P$ be a poset and let $P^*$ denote the dual of $P$.  Then
\[
\anti(J(P^*)) = \sum_{\substack{Q \in \Acyc(P) \\ \Max(P) \subseteq Q}} (-1)^{c(Q) + |P\sm Q|} J(Q).
\]
\end{corollary}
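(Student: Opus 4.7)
The plan is to apply Theorem~\ref{thm:antipode-LOI} to the dual poset $P^*$ and then re-index the resulting sum via poset duality. Substituting $P^*$ for $P$ in the theorem yields
\[
\anti(J(P^*)) = \sum_{R\in\Good(P^*)} (-1)^{c(R)+|P^*\sm R|} J(R),
\]
so the task is to transport the summation index $R\in\Good(P^*)$ to the set $\{Q\in\Acyc(P):\Max(P)\subseteq Q\}$ appearing in the statement, in a way that matches the summand.

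The natural tool is the involution $Q\leftrightarrow Q^*$ between fracturings of $P$ and fracturings of $P^*$: $Q^*$ has the same underlying set as $Q$, and each of its components is the order-dual of the corresponding component of $Q$. Because the components are identical as sets and differ only by reversing their order relations, one immediately has $c(Q)=c(Q^*)$ and $|P\sm Q|=|P^*\sm Q^*|$. Moreover, $\Min(P^*)=\Max(P)$, so the containment $\Min(P^*)\subseteq Q^*$ is equivalent to $\Max(P)\subseteq Q$.

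The main technical step, which I expect to be the only point requiring genuine thought, is to check that the conflict digraph is contravariantly functorial under poset duality: $\Con(Q^*)$ (taken with respect to $P^*$) is the reverse of $\Con(Q)$ (taken with respect to $P$). Unpacking Definition~\ref{def:conflict-digraph} with the substitution $y<_{P^*}x\iff x<_P y$ makes this immediate, since the defining inequality for an edge $Q_i^*\to Q_j^*$ of $\Con(Q^*)$ matches the defining inequality for the edge $Q_j\to Q_i$ of $\Con(Q)$. A digraph is acyclic iff its reverse is acyclic, so Proposition~\ref{classify-good-fracturings} identifies $\Good(P^*)$ with $\{Q^*:Q\in\Acyc(P),\ \Max(P)\subseteq Q\}$. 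Plugging this parametrization into the displayed formula above and applying the duality bijection to each summand then yields the corollary. The remainder is routine bookkeeping; the conflict-digraph reversal is the only nontrivial observation.
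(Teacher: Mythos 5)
Your route is the same as the paper's: apply Theorem~\ref{thm:antipode-LOI} to $P^*$ and transport the index set via the componentwise-dual bijection $Q\leftrightarrow Q^*$. The observations that $c(Q)=c(Q^*)$, $|P\sm Q|=|P^*\sm Q^*|$, $\Min(P^*)=\Max(P)$, and that $\Con(Q^*)$ taken in $P^*$ is the reversal of $\Con(Q)$ taken in $P$ are all correct, and together with Proposition~\ref{classify-good-fracturings} they do identify $\Good(P^*)$ with $\{Q^*\colon Q\in\Acyc(P),\ \Max(P)\subseteq Q\}$.

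The gap is in what you dismiss as ``routine bookkeeping,'' namely the summand. After substituting $R=Q^*$ into $\sum_{R\in\Good(P^*)}(-1)^{c(R)+|P^*\sm R|}J(R)$, the term you have is $J(Q^*)$, not $J(Q)$, and these are different basis elements of $\LOI[I]$ in general: if $Q$ is the chain $1<2$ then $J(Q)=\{\0,\{1\},\{1,2\}\}$ while $J(Q^*)=\{\0,\{2\},\{1,2\}\}$. Your closing move, ``applying the duality bijection to each summand,'' is not a legitimate operation on a formal linear combination --- substituting $J(Q)$ for $J(Q^*)$ changes the element being represented, and the duality map $\varphi(J(P))=J(P^*)$ is explicitly noted in the paper not to be a Hopf (anti)automorphism, so it does not commute with $\anti$. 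A direct Takeuchi computation for the two-element chain confirms that the formula with $J(Q^*)$ on the right is correct and the one with $J(Q)$ is not. What your argument actually establishes is
\[
\anti(J(P^*)) = \sum_{\substack{Q\in\Acyc(P)\\ \Max(P)\subseteq Q}} (-1)^{c(Q)+|P\sm Q|}\,J(Q^*),
\]
which is also what the paper's own proof establishes; the printed corollary appears to have $J(Q)$ where $J(Q^*)$ is meant. So the step you treat as an afterthought is exactly where the derivation stops short of the stated result, and the discrepancy should be flagged rather than papered over.
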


\begin{proof}
By Proposition~\ref{classify-good-fracturings} we see that $Q^*$ is a good fracturing of $P^*$ if and only if $Q^*\in\Acyc(P^*)$ and  $\Min(P^*)\subseteq Q^*$.  By taking the dual of each component of $Q^*$ we obtain a fracturing $Q$ of $P$.  $Q$ is an acyclic fracturing (of $P$) since dualizing each component will reverse the edges of the conflict graph of $Q^*$.  It also follows that $\Min(P^*) \subseteq Q$ and since $\Min(P^*) = \Max(P)$ we can write $\Max(P) \subseteq Q$.  Since $c(Q) = c(Q^*)$ and $|P\sm Q| = |P^*\sm Q^*|$ we can rewrite the antipode formula from Theorem~\ref{thm:antipode-LOI} for $J(P^*)$ as
\[
\anti(J(P^*)) = \sum_{\substack{Q \in \Acyc(P) \\ \Max(P) \subseteq Q}} (-1)^{c(Q) + |P\sm Q|} J(Q).
\]
\end{proof}

\begin{remark}
The duality map $\varphi:\LOI\to\LOI$ given by $\varphi(J(P))=J(P^*)$ is not a Hopf automorphism or antiautomorphism: it respects join and restriction, but not contraction.  For example, consider the zigzag poset $Z$ (Fig.~\ref{fig:zigzag}).  If $A = \{2\}$, then $\varphi(J(Z)/_A) = J(\{1\})$ but $\varphi(J(Z))/_A = J(\{3<1,4\})$.
\end{remark}

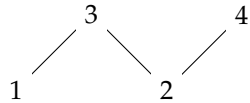
\begin{figure}[ht]
    \centering
    \begin{tikzpicture}
        \draw (0,0)--(1,1)--(2,0)--(3,1);
        \node[fill=white] at (0,0) {1};
        \node[fill=white] at (2,0) {2};
        \node[fill=white] at (1,1) {3};
        \node[fill=white] at (3,1) {4};
    \end{tikzpicture}
    \caption{The zigzag poset $Z$.\label{fig:zigzag}}
\end{figure}

\begin{example} \label{exa:antipode-for-antichains}
Let $A_n$ be an antichain with $n$ elements.  Then $\Min(A_n)=A_n$, so the only good fracturing of $A_n$ is $A_n$ itself; the components are its singleton subsets.  We have $u=n$ and $k=0$, so
\[
\anti(J(A_n)) = (-1)^n J(A_n).
\]
Moreover, $J(A_n)$ is just the Boolean algebra with atoms $A_n$, so this formula describes the antipode in the submonoid of Boolean set families (with no phantoms).
\end{example}

\begin{example} \label{exa:antipode-for-chains}
Let $P=C_n$ be the naturally ordered chain on $I=[n]$, so that $J(C_n)$ is the chain $\0 \subset [1] \subset [2] \subset \dots \subset [n]$.  Observe that $\Min(C_n)=\{1\}$, so the good fracturings are disjoint sums of chains $Q=Q_1+\cdots+Q_u$ such that $1\in Q_1$ and the blocks of $Q$ give a natural set composition $\Psi$ of a subset $V\subseteq[n]$.  (Here ``natural'' means that if $i<j$, then every element of $Q_i$ is strictly less than every element of $Q_j$.)
Therefore,
\begin{equation} \label{eq:antipode-for-chains}
\anti(J(C_n)) = \sum_{\substack{V\subseteq[n]:\\ 1\in V}} (-1)^{n-|V|} \sum_{\substack{\Psi=\Psi_1|\cdots|\Psi_u\compn V\\ \text{natural}}} (-1)^u J(C_{\Psi_1})*\cdots*J(C_{\Psi_u}).
\end{equation}
\end{example}

\subsection{Ordinal sums} \label{subsec:antipode-ordinal-sums}
Recall that the \defterm{ordinal sum} $\Pl \oplus \Ph$ is constructed from $\Pl+\Ph$ by adding the relations $x < y$ for all $x \in \Pl$ and $y \in \Ph$.  The good fracturings of $\Pl\oplus\Ph$ can be classified in terms of fracturings of $\Pl$ and $\Ph$, enabling us to give a formula for the antipode in an ordinal sum.

\begin{definition}
\label{def:pure-and-mixed}
Let $Q$ be a good fracturing of $P = \Pl \oplus \Ph$.  First, we say that $Q$ is \defterm{pure} if $Q = \Ql + \Qh$, where $\Ql$ is a good fracturing of $\Pl$ and $\Qh$ is an acyclic fracturing of $\Ph$.  (That is, every component of $Q$ is a subposet either of $\Pl$ or of $\Ph$.)  Second, we say that $Q$ is \defterm{mixed} if it has a component $H$ such that $H \cap \Pl \neq \0$ and $H \cap \Ph \neq \0$.  In this case $H$ is a \defterm{hybrid component}.  Note that $Q$ can have at most one hybrid component, since any two such would form a 2-cycle in the conflict digraph, so we may write $Q=\Ql+\Qh+H$, where $\Ql$ (resp., $\Qh$) is the subposet consisting of components contained in $\Pl$ (resp., $\Qh$).
\end{definition}

We will treat pure and mixed fracturings separately.  The summands in Theorem~\ref{thm:antipode-LOI} arising from pure fracturings are easier to describe.

\begin{proposition}[Pure fracturings]\label{prop:pure}
Let $P= \Pl \oplus \Ph$ where $\Pl \neq \0$.  Then
\[
\sum_{\text{pure\ }Q} (-1)^{c(Q)+|P\sm Q|}J(Q) = \anti(J(\Pl)) * \sum_{\Qh\in\Acyc(\Ph)} (-1)^{c(\Qh) + |\Ph\sm\Qh|} J(\Qh).
\]
\end{proposition}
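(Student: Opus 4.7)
The plan is to verify directly that pure fracturings $Q$ of $P=\Pl\oplus\Ph$ are in natural bijection with pairs $(\Ql,\Qh)$ consisting of a good fracturing of $\Pl$ and an acyclic fracturing of $\Ph$, and then to factor the sum on the left-hand side along this decomposition.

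First, I would check that whenever $\Ql\in\Good(\Pl)$ and $\Qh\in\Acyc(\Ph)$, the disjoint sum $Q=\Ql+\Qh$ is a good fracturing of $P$ in the sense of Proposition~\ref{classify-good-fracturings}. Since $\Pl\neq\0$, the ordinal sum has $\Min(P)=\Min(\Pl)$, so the condition $\Min(P)\subseteq Q$ reduces to $\Min(\Pl)\subseteq\Ql$, which holds by goodness of $\Ql$. For the conflict digraph, the crucial structural observation is that in $P=\Pl\oplus\Ph$ no element of $\Ph$ is $<_P$ any element of $\Pl$; so in $\Con(Q)$, every cross-edge between a $\Pl$-component and a $\Ph$-component points from the $\Ph$-component to the $\Pl$-component (and all such cross-edges are present). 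This asymmetry forces every directed cycle in $\Con(Q)$ to lie entirely within $\Con(\Ql)$ or entirely within $\Con(\Qh)$, so $\Con(Q)$ is acyclic if and only if both $\Con(\Ql)$ and $\Con(\Qh)$ are acyclic. Together with the minimum condition, this matches the definition of pure fracturing and establishes the bijection.

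Second, for such a $Q=\Ql+\Qh$, I would record the three elementary identities
\[
c(Q)=c(\Ql)+c(\Qh),\qquad |P\sm Q|=|\Pl\sm\Ql|+|\Ph\sm\Qh|,\qquad J(Q)=J(\Ql)*J(\Qh),
\]
the last one coming from the identity $J(P_1+P_2)=J(P_1)*J(P_2)$ already used in the proof of Proposition~\ref{LOI-Form-Submonoid}.

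Third, I would substitute these into $\sum_{\text{pure }Q}(-1)^{c(Q)+|P\sm Q|}J(Q)$. The signs multiply and the joins factor through the sum, yielding
\[
\left(\sum_{\Ql\in\Good(\Pl)}(-1)^{c(\Ql)+|\Pl\sm\Ql|}J(\Ql)\right)\,*\,\left(\sum_{\Qh\in\Acyc(\Ph)}(-1)^{c(\Qh)+|\Ph\sm\Qh|}J(\Qh)\right),
\]
and the first factor is exactly $\anti(J(\Pl))$ by Theorem~\ref{thm:antipode-LOI}. There is no real obstacle here; the only step that requires care is the asymmetry argument in the conflict digraph, which is what makes the $\Pl$-side contribute a full antipode while the $\Ph$-side only needs acyclicity (not the minimum condition).
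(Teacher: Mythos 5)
Your proposal is correct and follows essentially the same approach as the paper's own proof: you verify that pairs $(\Ql,\Qh)\in\Good(\Pl)\times\Acyc(\Ph)$ correspond bijectively to pure good fracturings of $P$ by noting $\Min(P)=\Min(\Pl)$ and the one-directional structure of cross-edges in $\Con(Q)$, then factor the sum using $c(Q)=c(\Ql)+c(\Qh)$, $|P\sm Q|=|\Pl\sm\Ql|+|\Ph\sm\Qh|$, and $J(\Ql+\Qh)=J(\Ql)*J(\Qh)$. The paper's argument is identical in all essential steps, including the observation that cross-edges in $\Con(Q)$ all point from $\Qh$-components to $\Ql$-components.
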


\begin{proof}
If $Q$ is pure, then $Q = \Ql + \Qh$ where $\Ql$ is a good fracturing of $\Pl$ and $\Qh$ is an acyclic fracturing of $\Ph$.  The conflict digraph $\Con_P(Q)$ is formed from the acyclic digraph $\Con_P(\Ql)+\Con_P(\Qh)$ by adding edges from every component of $\Qh$ to every component of $\Ql$, but not vice versa (see Figure~\ref{fig:conflicts}), so it too is acyclic.  Moreover, $\Min(P) = \Min(\Pl)$, so $Q$ is in fact a good fracturing of $P$.  Therefore
\begin{align*}
\sum_{\text{pure\ }Q} (-1)^{c(Q)+|P\sm Q|}J(Q)
&= \sum_{\Ql \in \Good(\Pl)}\sum_{\Qh\in\Acyc(\Ph)} (-1)^{c(\Ql + \Qh)+|P\sm(\Ql+\Qh)|}J(\Ql+\Qh)\\
&= \sum_{\Ql \in \Good(\Pl)}\sum_{\Qh\in\Acyc(\Ph)} (-1)^{c(\Ql) + c(\Qh) + |\Pl\sm\Ql| + |\Ph\sm\Qh|}J(\Ql)*J(\Qh)\\
&= \anti(J(\Pl)) * \sum_{\Qh\in\Acyc(\Ph)} (-1)^{c(\Qh) + |\Ph\sm\Qh|} J(\Qh).\qedhere
\end{align*}
\end{proof}

\begin{figure}[htb]
\begin{center}
\begin{tikzpicture}[scale=.7]
\newcommand{\xell}{1.6} \newcommand{\yell}{.7} 
\newcommand{\ycon}{2} 

\node at (0,\ycon) {$\Con_P(\Qh)$};
\node at (0,-\ycon) {$\Con_P(\Ql)$};
\draw (0,\ycon) ellipse ({\xell} and \yell);
\draw (0,-\ycon) ellipse ({\xell} and \yell);
\draw [latex-] (0,1.25*\yell-\ycon)--(0,\ycon-1.25*\yell);
\foreach \x in {-1,1} \draw [latex-] (\x,1.1*\yell-\ycon)--(\x,\ycon-1.1*\yell);

\begin{scope}[shift={(8,0)}]
\node at (0,\ycon) {$\Con_P(\Qh)$};
\node at (0.55,0) {$H$};
\node at (0,-\ycon) {$\Con_P(\Ql)$};
\draw (0,\ycon) ellipse ({\xell} and \yell);
\fill (0,0) circle (1.25mm);
\draw (0,-\ycon) ellipse ({\xell} and \yell);
\draw [latex-] (0,.1)--(0,\ycon-1.25*\yell);
\draw [latex-] (-.15,0)--(-.85,\ycon-1.1*\yell);
\draw [latex-] (.15,0)--(.85,\ycon-1.1*\yell);
\draw [-latex] (-1,\ycon-1.1*\yell)--(-1,1.1*\yell-\ycon);
\draw [-latex] (1,\ycon-1.1*\yell)--(1,1.1*\yell-\ycon);
\draw [-latex] (0,0)--(0,1.25*\yell-\ycon);
\draw [-latex] (0,0)--(-.85,1.1*\yell-\ycon);
\draw [-latex] (0,0)--(.85,1.1*\yell-\ycon);
\end{scope}
\end{tikzpicture}
\caption{Conflict graphs of a pure fracturing (left) and a mixed fracturing (right).\label{fig:conflicts}}
\end{center}
\end{figure}
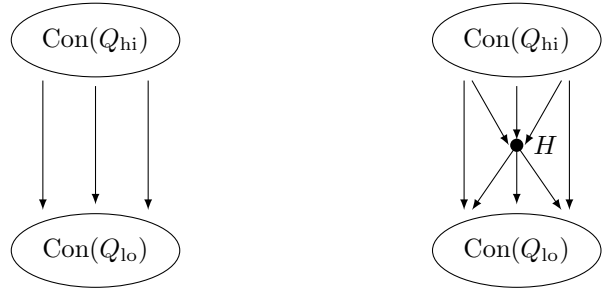

\begin{proposition}[Mixed fracturings] \label{prop:mixed}
Let $P = \Pl \oplus \Ph$, and suppose that $Q=\Ql+\Qh+H$ is a mixed fracturing of $P$, as in Definition~\ref{def:pure-and-mixed}.
Then $Q$ is a good fracturing of $P$ if and only if
\begin{enumerate}
\item the induced subposet $Q[Q\cap\Pl]$ is a good fracturing of $\Pl$;
\item the induced subposet $Q[Q\cap\Ph]$ is an acyclic fracturing of $\Ph$;
\item $H \cap \Pl$ is a order filter in $P[Q\cap \Pl]$;
\item $H \cap \Ph$ is a order ideal in $P[Q \cap \Ph]$.
\end{enumerate}
Consequently,
\[
\sum_{\text{mixeds\ }Q} (-1)^{c(Q)+|P\sm Q|}J(Q)
= \sum_{\substack{H\in\Hyb(\Pl,\Ph)\\ \Ql \in \Good(\Pl\sm \filter{H}) \\ \Qh \in \Acyc(\Ph\sm \ideal{H})}} (-1)^{c(\Ql)+c(\Qh)+1 + |P| - |\Ql| - |\Qh|-|H|} J(\Ql \oplus H \oplus \Qh).
\]
where $\Hyb(\Pl,\Ph)$ denotes the set of induced subposets $H$ of $P$ that intersect both $\Pl$ and $\Ph$ (thus, potential hybrid components of a mixed fracturing).
\end{proposition}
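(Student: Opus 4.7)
The plan is to first establish the equivalence of ``$Q$ is a good mixed fracturing'' with conditions~(1)--(4) via Proposition~\ref{classify-good-fracturings}, and then re-index the summation over triples $(H,\Ql,\Qh)$.

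For the forward direction, Proposition~\ref{classify-good-fracturings} gives $\Min(P)=\Min(\Pl)\subseteq Q$ and $\Con(Q)$ acyclic.  Conditions~(3) and~(4) arise from forbidding 2-cycles in $\Con(Q)$ that involve $H$.  Since $H\cap\Ph\neq\0$, any $z\in H\cap\Ph$ satisfies $y<_P z$ for every $y\in\Pl$, so every component $C$ of $\Ql$ admits an edge $H\to C$ in $\Con(Q)$; forbidding the back-edge $C\to H$ is precisely condition~(3), and~(4) is symmetric.  For~(1), $\Min(\Pl)\subseteq Q\cap\Pl$ is immediate, and acyclicity of $\Con(Q[Q\cap\Pl])$ combines~(3) with the observation that distinct connected pieces of $H\cap\Pl$ in the induced subposet $\Pl[H\cap\Pl]$ admit no $\Con$-edges among themselves, since any $\Pl$-order relation between two such pieces would merge them into one.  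Condition~(2) is the analog on the $\Ph$ side, with no minima requirement because $\Min(\Ph)$ need not intersect $\Min(P)$.

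For the backward direction, $\Min(P)\subseteq Q$ is immediate from~(1).  To establish acyclicity of $\Con(Q)$, I catalogue the possible inter-component edges: (3) forbids $\Ql\to H$, (4) forbids $H\to\Qh$, $\Ql\to\Qh$ edges cannot exist in an ordinal sum, intra-$\Ql$ cycles are ruled out by~(1), and intra-$\Qh$ cycles by~(2).  The surviving edges place $\{\Qh\text{-components},H\}$ strictly above $\{\Ql\text{-components}\}$, with $H$ having no out-edges within the upper layer, so no cycle can close.

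For the enumeration, the characterization yields a bijection between mixed good fracturings and triples $(H,\Ql,\Qh)$ with $H\in\Hyb(\Pl,\Ph)$, $\Ql\in\Good(\Pl\sm\filter{H})$, and $\Qh\in\Acyc(\Ph\sm\ideal{H})$.  The key identity is $\Min(\Pl\sm\filter{H})=\Min(\Pl)\sm\filter{H}_\Pl$ (since any minimum of $\Pl$ lying in $\filter{H}_\Pl$ must itself belong to $H\cap\Pl$), together with the observation that $\Con(\Ql)$ is the same whether $\Ql$ is viewed as a fracturing of $\Pl$ or of $\Pl\sm\filter{H}$.  Since $Q$ is the disjoint sum $\Ql+H+\Qh$, we have $J(Q)=J(\Ql)*J(H)*J(\Qh)$; the sign follows from $c(Q)=c(\Ql)+c(\Qh)+1$ and $|P\sm Q|=|P|-|\Ql|-|\Qh|-|H|$.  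The principal obstacle throughout is the case in which $H\cap\Pl$ (or $H\cap\Ph$) fails to be connected in its respective half-poset, so that $Q[Q\cap\Pl]$ strictly refines $\Ql+(H\cap\Pl)$; the saving observation that those extra pieces contribute no conflict-digraph edges (either among themselves or, by~(3), to $\Ql$-components) is what keeps both directions of the characterization consistent.
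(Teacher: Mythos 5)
Your proof is correct and follows essentially the same route as the paper's: characterize goodness of a mixed fracturing via Proposition~\ref{classify-good-fracturings}, split into $\Ql$, $H$, $\Qh$, and re-index the sum. One place you are noticeably \emph{more} careful than the published argument: the paper asserts that $\Con(Q[Q\cap\Pl])$ is ``isomorphic to a subgraph'' of $\Con(Q)$, which is literally false when $H\cap\Pl$ is disconnected (one vertex of $\Con(Q)$ can split into several vertices of $\Con(Q[Q\cap\Pl])$). Your explicit observation that distinct Hasse pieces of $H\cap\Pl$ carry no conflict edges among themselves --- because any order relation between two such pieces would place them in one Hasse component of $Q[Q\cap\Pl]$ --- is exactly what rescues the acyclicity claim, and you also correctly use condition~(3) to kill back-edges from $\Ql$-components to $H$-pieces. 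One small reorganization worth noting: you establish~(3) and~(4) before~(1) and~(2) and then feed~(3) into the acyclicity argument, whereas the paper does~(1)/(2) first via the (loose) subgraph claim; both orderings work, but yours makes the disconnection issue transparent. The rest --- the identification $\Min(\Pl\sm\filter{H})=\Min(\Pl)\sm\filter{H}$, the triangular structure of $\Con(Q)$ in the converse, and the sign bookkeeping --- matches the paper.
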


\begin{proof}
First note that $\Min(Q) = \Min(\Pl) \subseteq \Pl \cap Q$.  To show that $Q\cap\Pl$ and $Q\cap\Ph$ are acyclic, observe that $\Con_P(Q[Q\cap\Pl])$ is obtained from $\Con_P(Q)$ by removing all components whose restriction to $\Pl$ is empty.  Thus $\Con_P(Q[Q\cap\Pl])$ is isomorphic to a subgraph of $\Con_P(Q)$ and thus $\Con_P(Q[Q\cap\Pl])$ is acyclic. Likewise, $\Con_P(Q[Q\cap\Ph])$ is isomorphic to a subgraph of $\Con_P(Q)$ and thus $\Con_P(Q[Q\cap\Ph])$ is acyclic.  Hence $Q[Q\cap\Pl]$ is a good fracturing of $\Pl$ and $Q[Q\cap\Ph]$ is an acyclic fracturing of $\Ph$.

To show that $H \cap \Pl$ is an order filter of $Q \cap \Pl$, suppose that $a,b \in \Pl$ such that $a < b$, $a \in H$ and $b$ belongs to some component $Y$ of $Q$.  Then $\Con_P(Q)$ has an edge $Y \to H$.  Since $H \cap \Ph \neq \0$ we also know there is an edge $H \to Y$.  Since $\Con_P(Q)$ is acyclic, it must be the case that $H = Y$ and thus $b \in H \cap \Pl$.  Hence $X \cap \Pl$ is an order filter of $Q \cap \Pl$.  A similar argument shows that $H \cap \Ph$ is an order ideal of $Q \cap \Ph$.  Thus we have verified that conditions (1)--(4) are necessary for $Q$ to be a good fracturing of $P$.
\medskip

Conversely, suppose $Q$ is a fracturing of $P$ with a unique hybrid component $H$ and that conditions (1)--(4) hold.  Since $\Min(P) = \Min(\Pl) \subseteq Q[Q\cap\Pl]\subset Q$, we need only show that $\Con_P(Q)$ is acyclic.

Consider the induced subdigraphs $G_{\mathrm{lo}} = \Con_P(Q[Q\cap(\Ph\sm H)])$ and $G_{\mathrm{hi}} = \Con_P(Q[Q\cap(\Ph\sm H)])$.  As subdigraphs of an acyclic digraph, $G_{\mathrm{lo}}$ and $G_{\mathrm{hi}}$ are acyclic.  Furthermore, we claim that $\Con_P(Q)$ consists of the disjoint union $G_{\mathrm{lo}} + G_{\mathrm{hi}}$ together with the vertex $H$ and the edges
\[\left\{
B_{\mathrm{hi}} \to B_{\mathrm{lo}},\ 
H \to B_{\mathrm{lo}},\ 
B_{\mathrm{hi}} \to H ~\colon~ B_{\mathrm{lo}} \in G_{\mathrm{lo}},\ B_{\mathrm{hi}} \in G_{\mathrm{hi}}
\right\}\]
(see Figure~\ref{fig:conflicts}).
Indeed, the graph thus constructed is a subgraph of $\Con_P(Q)$ since every edge comes from a relation between two components of $Q$.  Since every element of $\Ph$ is greater than every element of $\Pl$, there are no edges of the form $B_{\mathrm{lo}} \to B_{\mathrm{hi}}$.  There are no edges of the form $B_{\mathrm{lo}} \to H$ by assumption (3), and no edges of the form $H\to B_{\mathrm{hi}}$ by assumption (4).  Hence the constructed graph must be $\Con_P(Q)$.  Thus $Q$ is a good fracturing of~$P$.
\end{proof}

Combining Propositions~\ref{prop:pure} and~\ref{prop:mixed} yields a cancellation-free formula for the antipode of the ordinal sum of two posets:

\begin{theorem} \label{thm:antipode-of-ordinal-sum}
Suppose $P = \Pl \oplus \Ph$.  Then
\begin{align*}
\anti(J(P)) &= \anti(J(\Pl)) * \sum_{\Qh\in\Acyc(\Ph)} (-1)^{c(\Qh) +  |\Ph\sm\Qh|} J(\Qh)\\ 
&+ \sum_{\substack{H\in\Hyb(\Pl,\Ph)\\ \Ql \in \Good(\Pl\sm \filter{H}) \\ \Qh \in \Acyc(\Ph\sm \ideal{H})}} (-1)^{c(\Ql)+c(\Qh)+1 + |P| - |\Ql| - |\Qh|-|H|} J(\Ql \oplus H \oplus \Qh).
\end{align*}
\end{theorem}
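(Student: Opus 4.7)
The plan is to partition the good fracturings of $P = \Pl \oplus \Ph$ into pure and mixed ones (in the sense of Definition~\ref{def:pure-and-mixed}) and invoke Propositions~\ref{prop:pure} and~\ref{prop:mixed} on the two pieces separately. First I would observe that every good fracturing of $P$ falls into exactly one of these two classes, since a good fracturing can have at most one hybrid component: two hybrid components $H_1, H_2$ would contribute antiparallel edges $H_1 \to H_2$ and $H_2 \to H_1$ to the conflict digraph (pick $a_i \in H_i \cap \Pl$ and $b_i \in H_i \cap \Ph$; since $\Pl$ lies below $\Ph$ in $P$, we have $a_1 <_P b_2$ and $a_2 <_P b_1$), which would contradict acyclicity of $\Con(Q)$ and thus Proposition~\ref{classify-good-fracturings}.

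Next I would apply Theorem~\ref{thm:antipode-LOI} and split the resulting sum according to this dichotomy:
\[
\anti(J(P)) = \sum_{\substack{Q \in \Good(P)\\Q \text{ pure}}} (-1)^{c(Q) + |P \sm Q|} J(Q) \;+\; \sum_{\substack{Q \in \Good(P)\\Q \text{ mixed}}} (-1)^{c(Q) + |P \sm Q|} J(Q).
\]
Each summand is exactly the left-hand side of the identity in Proposition~\ref{prop:pure} or Proposition~\ref{prop:mixed}, respectively, and substituting the right-hand sides yields the expression asserted in the theorem.

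Because Propositions~\ref{prop:pure} and~\ref{prop:mixed} already handle the sign bookkeeping, the reparametrization by hybrid subposets $H \in \Hyb(\Pl,\Ph)$, and the identification of the auxiliary posets $\Pl \sm \filter{H}$ and $\Ph \sm \ideal{H}$ in the mixed case, there is essentially no additional technical work. The only genuinely new content is the purity/mixedness dichotomy, dispatched by the one-component-at-most argument above; I do not anticipate any serious obstacle beyond that.
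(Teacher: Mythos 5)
Your proposal is correct and follows the paper's own approach: the paper derives the theorem by simply combining Propositions~\ref{prop:pure} and~\ref{prop:mixed} after splitting the sum in Theorem~\ref{thm:antipode-LOI} over the pure/mixed dichotomy of Definition~\ref{def:pure-and-mixed}. Your explicit justification that the dichotomy is exclusive (at most one hybrid component, via the antiparallel-edge argument) is in fact already recorded in Definition~\ref{def:pure-and-mixed}, so you have just made the paper's terse argument slightly more self-contained.
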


\begin{example}
When $P$ is an antichain, setting $\Ph=P$ and $\Pl=\0$ recovers Example~\ref{exa:antipode-for-antichains}.

  More generally, suppose that $P$ is a \defterm{complete ranked poset}: that is, $P$ is ranked, and every pair of elements of different ranks are comparable.  Equivalently, $P$ is an ordinal sum of antichains, so Theorem~\ref{thm:antipode-of-ordinal-sum} can be specialized to give an inductive formula for the antipode of $P$.  Specifically, we can write $P=\Pl\oplus\Ph$, where $\Ph$ is the set of maximal elements of $P$ (equivalently, elements of maximal rank), so that $\Acyc(\Ph)=2^{\Ph}$, and Theorem~\ref{thm:antipode-of-ordinal-sum} implies that
\begin{align*}
\anti(J(P)) &= \anti(J(\Pl)) * \sum_{\Qh\subseteq\Ph} (-1)^{|\Qh|} 2^{\Qh} \\ 
&+\sum_{H \in \Hyb(\Pl,\Ph)} \sum_{\Ql \in \Good(\Pl\sm \filter{H})} \sum_{\Qh \subseteq \Ph\sm \ideal{H}} (-1)^{c(\Ql)+1 + |(\Pl\sm H)\sm\Ql| + |\Ph\sm H|} J(\Ql \oplus H \oplus \Qh).
\end{align*}
One can then recursively apply this formula to $\Pl$ to obtain the antipode of the complete ranked poset~$P$.
\end{example}

\section{Simplicial complexes: progress toward an antipode} \label{sec:submonoid-simp}

We now consider the problem of the antipode for the Hopf submonoid $\SIMP$ of simplicial complexes.  The cocommutativity of $\SIMP$ implies that permuting the blocks of a set composition does not change the corresponding summand in Takeuchi's formula.  Consequently, Takeuchi's formula can be rewritten as a sum over set partitions, with many fewer terms; on the other hand, we cannot expect the coefficients to be $\pm1$.
Another convenient consequence of cocommutativity is that contraction does not produce any new phantoms.  On the other hand, there does not appear to be a complete description of support systems as there was for $\LOI$, and so a general cancellation-free antipode formula for $\SIMP$ seems unobtainable.

Throughout this section, let $X$ be a simplicial complex on ground set $[n]$ (with possible phantoms).  For each $\Phi = \Phi_1|\dots|\Phi_k \compn [n]$, define
\begin{equation} \label{prod-cprod-simplicial}
\begin{aligned}
\decomp{X}{\Phi} &= \mu_{\Phi}(\Delta_{\Phi}(X))
= X|_{\Phi_1}*\dots*X|_{\Phi_k}\\
&= \langle \sigma = \sigma_1\cup\dots\cup \sigma_k\colon \sigma_i \text{ is a facet of } X|_{\Phi_i} \rangle
\end{aligned}
\end{equation}
where $\langle F\rangle$ means the simplicial complex generated by the list of faces $F$.
In particular, permuting the blocks of $\Phi$ does not change the subcomplex $\decomp{X}{\Phi}$.  Accordingly, from now on, we will work with set \textit{partitions} $\Phi\partn[n]$ (unordered decompositions of $I$ into blocks) rather than set \textit{compositions} $\Phi\compn[n]$.

If $X=\decomp{X}{\Phi}$, we say that $X|_{\Phi_1}*\dots*X|_{\Phi_k}$ is a \defterm{join decomposition of $X$ of length $k$.}

Recall some standard facts about the partition lattice \cite[Ex.~3.10.4]{ec-1}.  The family $\Pi_n$ of set partitions of $[n]$ is partially ordered by reverse refinement: for $\Phi,\Psi\partn I$, we have $\Phi\leq\Psi$ if every block of $\Psi$ is a union of blocks of $\Phi$.  This poset is in fact a (geometric) lattice: the meet $\Phi\wedge\Psi$ is the coarsest common refinement of $\Phi$ and $\Psi$.

Rewritten in terms of set partitions, Takeuchi's formula~\eqref{Takeuchi} becomes
\begin{equation} \label{Takeuchi-cc}
\anti_I(X) = \sum_{\Phi \partn I} (-1)^{|\Phi|} |\Phi|!\, \decomp{X}{\Phi}.
\end{equation}
Accordingly, in analogy to Definition~\ref{fracturing}, we define the \defterm{support system} of the pair $X,Y$ as
\[\Supp_X(Y) = \{\Phi\in \Pi_n \colon Y=\decomp{X}{\Phi}\}.\]
Observe that if $\Supp_X(Y)\neq\0$ then $X\subseteq Y$.  Accordingly, in this case we say that $Y$ is an \defterm{inflation} of $X$, or that $\Phi\in\Supp_X(Y)$ \defterm{inflates} $X$ into $Y$.

\begin{proposition} \label{refine}
Suppose that $\Phi=\Phi_1|\cdots|\Phi_k$ and $\Psi=\Psi_1|\cdots|\Psi_\ell$ are set partitions such that $\decomp{X}{\Phi}=\decomp{X}{\Psi}$.  Then in fact $\decomp{X}{\Phi}=\decomp{X}{\Psi}=\decomp{X}{\Omega}$, where $\Omega$ is the common refinement $\Phi\wedge\Psi$.  (I.e., the blocks of $\Omega$ are the nonempty sets of the form $\Phi_i\cap\Psi_j$.)
\end{proposition}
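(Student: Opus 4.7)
The plan is to exploit the standard characterization, immediate from~\eqref{prod-cprod-simplicial}, that a face $\sigma$ lies in $\decomp{X}{\Pi}$ if and only if $\sigma\cap\Pi_j\in X|_{\Pi_j}$ for every block $\Pi_j$ of a set partition $\Pi$. Given this, the proposition reduces to two containments: $\decomp{X}{\Phi},\decomp{X}{\Psi}\subseteq\decomp{X}{\Omega}$, which need no hypothesis, and $\decomp{X}{\Omega}\subseteq\decomp{X}{\Phi}$, which is where the assumed equality gets used.

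The forward containment is routine. Since each nonempty block of $\Omega$ has the form $\Omega_j=\Phi_i\cap\Psi_h$ for a unique $(i,h)$, any $\sigma\in\decomp{X}{\Phi}$ satisfies $\sigma\cap\Phi_i=F\cap\Phi_i$ for some $F\in X$, and therefore $\sigma\cap\Omega_j=F\cap\Omega_j\in X|_{\Omega_j}$; the symmetric argument works for $\Psi$.

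The interesting direction is the reverse containment, and the key move is to feed a partial face back into the hypothesis. Fix $\sigma\in\decomp{X}{\Omega}$ and a block $\Phi_i$ of $\Phi$. I claim that the ``slice'' $\sigma\cap\Phi_i$, regarded as a face on the full ground set $[n]$, already lies in $\decomp{X}{\Psi}$. Indeed, for each $h$,
\[
(\sigma\cap\Phi_i)\cap\Psi_h \;=\; \sigma\cap(\Phi_i\cap\Psi_h) \;\in\; X|_{\Phi_i\cap\Psi_h} \;\subseteq\; X|_{\Psi_h},
\]
where the final inclusion holds because any face of $X|_{\Phi_i\cap\Psi_h}$ has the form $F\cap\Phi_i\cap\Psi_h$, which is a subface of $F\cap\Psi_h\in X|_{\Psi_h}$, and $X|_{\Psi_h}$ is closed under subfaces. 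By the characterization applied to $\Psi$, this gives $\sigma\cap\Phi_i\in\decomp{X}{\Psi}$; by the hypothesis, $\sigma\cap\Phi_i\in\decomp{X}{\Phi}$; and applying the characterization one last time at the block $\Phi_i$ itself yields $\sigma\cap\Phi_i\in X|_{\Phi_i}$. Since this holds for every $i$, we conclude $\sigma\in\decomp{X}{\Phi}$.

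I anticipate the main obstacle being purely notational: one must keep careful track of which partition's characterization is being invoked at each step, since the slice $\sigma\cap\Phi_i$ is first certified as a face of $\decomp{X}{\Psi}$ using $\Psi$-blocks, then transferred to $\decomp{X}{\Phi}$ via the hypothesis, and finally re-examined along $\Phi$-blocks to extract membership in $X|_{\Phi_i}$. Once this bookkeeping is in place, no further machinery is needed.
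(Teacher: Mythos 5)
Your proof is correct and takes essentially the same approach as the paper's own argument: both decompose the face along the blocks of $\Omega$, group by $\Phi$-blocks to form slices (your $\sigma\cap\Phi_i$ is the paper's $\omega_i$), certify each slice as a face of $\decomp{X}{\Psi}$, transfer across the hypothesis to $\decomp{X}{\Phi}$, and then conclude each slice lies in $X|_{\Phi_i}$. Your use of the explicit block-by-block characterization of membership in $\decomp{X}{\Pi}$ merely makes the bookkeeping in the paper's proof a little more transparent.
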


\begin{proof}
The hypothesis $\decomp{X}{\Phi}=\decomp{X}{\Psi}$ is equivalent to $X_{\Phi_1}*\dots*X_{\Phi_k}=X_{\Psi_1}*\dots*X_{\Psi_\ell}$, i.e.,
\[\{\sigma_1\cup\dots\cup\sigma_k\colon \sigma_i\in X,\ \sigma_i\subseteq \Phi_i\} =
\{\tau_1\cup\dots\cup\tau_\ell\colon \tau_i\in X,\ \tau_i\subseteq \Psi_i\}.\]
In particular, both $\decomp{X}{\Phi}$ and $\decomp{X}{\Psi}$ are subcomplexes of $\mu_\Omega(\Delta_\Omega(X))$.  On the other hand, every face $\omega\in\Omega$ is a union of faces
\[\{\omega_{ij} \in X_{\Phi_i \cap \Psi_j} \colon i\in[k], \ j\in[\ell]\}.\]
For $i\in[k]$, let $\omega_i = \omega_{i1} \cup \dots \cup \omega_{i\ell}$; then $\omega_i \in X_{\Psi_i}*\dots*X_{\Psi_\ell} = X_{\Phi_i}*\dots*X_{\Phi_k}$, and $\omega_i \subset \Phi_i$, so in fact $\omega_i \in X_{\Phi_i}$.  It follows that $\omega=\omega_1\cup\cdots\omega_k\in\decomp{X}{\Phi}$.  A similar argument shows that $\omega\in\decomp{X}{\Psi}$.
\end{proof}

\begin{corollary} \label{sc-support}
Every nonempty support system $\Supp_X(Y)$ is meet-closed and interval-closed.
\end{corollary}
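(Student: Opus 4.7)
The plan is to verify the two closure properties separately, both resting on the observation that finer partitions yield larger subcomplexes. Meet-closedness is immediate from Proposition~\ref{refine}: if $\Phi, \Psi \in \Supp_X(Y)$, then $\decomp{X}{\Phi} = \decomp{X}{\Psi} = Y$, and the proposition gives $\decomp{X}{\Phi \wedge \Psi} = Y$, so $\Phi \wedge \Psi \in \Supp_X(Y)$.

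For interval-closedness, I would first isolate the following antitone monotonicity, which is already implicit in the opening paragraph of the proof of Proposition~\ref{refine} but worth recording as a free-standing statement: if $\Omega \leq \Phi$ in $\Pi_n$, then $\decomp{X}{\Phi} \subseteq \decomp{X}{\Omega}$. To see this, take a generating face $\sigma = \sigma_1 \cup \cdots \cup \sigma_k$ of $\decomp{X}{\Phi}$, with $\sigma_i \in X|_{\Phi_i}$. Each $\Phi_i$ is the disjoint union of the blocks $\Omega_j \subseteq \Phi_i$, so we may write $\sigma_i = \bigcup_j (\sigma_i \cap \Omega_j)$. Every $\sigma_i \cap \Omega_j$ is a subset of $\sigma_i \in X$ (hence itself a face of $X$) and is contained in $\Omega_j$, so it lies in $X|_{\Omega_j}$. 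This exhibits $\sigma$ as a join of faces indexed by the blocks of $\Omega$, so $\sigma \in \decomp{X}{\Omega}$.

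With the monotonicity in hand, interval-closedness becomes a one-line sandwich: if $\Phi \leq \Omega \leq \Psi$ and $\Phi, \Psi \in \Supp_X(Y)$, then
\[
Y \;=\; \decomp{X}{\Psi} \;\subseteq\; \decomp{X}{\Omega} \;\subseteq\; \decomp{X}{\Phi} \;=\; Y,
\]
forcing $\decomp{X}{\Omega} = Y$ and hence $\Omega \in \Supp_X(Y)$. There is no real obstacle here; the only point to watch is the direction of the antitone inclusion (coarser partitions give smaller subcomplexes, because forming a join over a larger block imposes the stronger requirement that each piece, and not just the union, be a face of $X$).
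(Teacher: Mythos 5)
Correct, and essentially the same approach as the paper: meet-closure directly from Proposition~\ref{refine}, and interval-closure from the antitone monotonicity $\Omega \leq \Phi \implies \decomp{X}{\Phi} \subseteq \decomp{X}{\Omega}$. The only difference is that you spell out the proof of that monotonicity fact, which the paper simply asserts ``in general''; your argument for it (breaking each $\sigma_i$ across the finer blocks and noting subsets of faces are faces) is the right one.
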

\begin{proof}
Closure under meet is just Proposition~\ref{refine}.  Moreover, if $\Phi<\Psi<\Omega$ are partitions, then in general
$\decomp{X}{\Phi}\supseteq\decomp{X}{\Psi}\supseteq\decomp{X}{\Omega}$, so if $\Supp_X(Y)$ contains both $\Phi$ and $\Omega$ then it contains $\Psi$ as well.
\end{proof}

In particular, each nonempty $\Supp_X(Y)$ contains a unique finest partition $\FInf_X(Y)$, which we will call the \defterm{fundamental inflator} of $X$ with respect to~$Y$.  The collection of all fundamental inflators of $X$ will be denoted $\Fund(X)$.  We may further group Takeuchi's formula~\eqref{Takeuchi-cc} as
\begin{equation} \label{Takeuchi-grouped}
\anti_I(X) = \sum_{\Phi\in\Fund(X)} c_\Phi \decomp{X}{\Phi},
\quad\text{where}\quad
c_\Phi=\sum_{\Psi\partn I:\ \decomp{X}{\Phi}=\decomp{X}{\Psi}} (-1)^{|\Psi|} |\Psi|!.
\end{equation}
This expression is cancellation-free in the sense that the basis elements $\decomp{X}{\Phi}$ are all distinct, although the coefficients are not necessarily in the simplest possible form.

\begin{proposition}
The coefficient of $X$ in $\anti(X)$ is $(-1)^{m(X)}$, where $m(X)$ is the maximum length of a join decomposition of $X$.
\end{proposition}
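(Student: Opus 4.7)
The plan is to extract the coefficient directly from Takeuchi's formula, reduce the computation to a sum over an interval in the partition lattice, and evaluate it using a standard identity.

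First, I would observe that for $\Phi\partn[n]$, the identity $\decomp{X}{\Phi}=X$ holds if and only if $\Phi$ records a join decomposition $X=X_1*\cdots*X_k$: one direction is immediate from~\eqref{prod-cprod-simplicial}, while for the converse, if $X=X_1*\cdots*X_k$ with $X_i$ supported on $\Phi_i$, restricting both sides to $\Phi_i$ forces $X_i=X|_{\Phi_i}$. Under this bijection the number of blocks $|\Phi|$ equals the length of the corresponding decomposition. Extracting the coefficient of $X$ from~\eqref{Takeuchi-cc} therefore gives
\[
C(X)=\sum_{\Phi\in\Supp_X(X)}(-1)^{|\Phi|}|\Phi|!.
\]

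Next, by Corollary~\ref{sc-support}, $\Supp_X(X)$ is meet-closed and interval-closed in $\Pi_n$; it contains the top element $\hat 1$ trivially (since $\decomp{X}{\hat 1}=X|_{[n]}=X$) and has a unique minimum $\FInf_X(X)$, so it is exactly the interval $[\FInf_X(X),\hat 1]$. Writing $m:=|\FInf_X(X)|$, the finest element of this interval corresponds to the finest (and hence longest) join decomposition of $X$, so $m=m(X)$. The interval $[\FInf_X(X),\hat 1]$ is canonically isomorphic to the full partition lattice $\Pi_m$ via the block-collapsing map, and this isomorphism preserves the number of blocks, giving
\[
C(X)=\sum_{\Psi\partn[m]}(-1)^{|\Psi|}|\Psi|!.
\]

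Finally, I would establish the identity $\sum_{\Psi\partn[m]}(-1)^{|\Psi|}|\Psi|!=(-1)^m$ by an exponential generating function calculation: grouping by the number $k$ of blocks, the sum is $\sum_{k}(-1)^k k!\,S(m,k)$, and since $\sum_{m\ge 0} S(m,k)\,x^m/m!=(e^x-1)^k/k!$, the relevant EGF is $\sum_{k\ge 0}(1-e^x)^k=e^{-x}$, whose $m$th coefficient in $x^m/m!$ is $(-1)^m$. Combining the three reductions yields $C(X)=(-1)^{m(X)}$. The only genuine check lies in the middle step, namely that the finest element of $\Supp_X(X)$ realizes the maximum-length join decomposition (a minor wrinkle is that each phantom vertex is forced to appear as a singleton block of $\FInf_X(X)$, corresponding to a trivial join factor $\{\0\}$), which follows cleanly from Corollary~\ref{sc-support}; everything else is bookkeeping and a standard EGF identity.
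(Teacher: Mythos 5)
Your proof is correct and follows essentially the same structure as the paper's: identify $\Supp_X(X)$ with the interval $[\FInf_X(X),\hat 1]\isom\Pi_{m(X)}$ and then evaluate the resulting alternating sum. The only difference is the final step: the paper interprets the sum via the set-composition version of Takeuchi's formula as the reduced Euler characteristic of the braid triangulation of $\Ss^{m(X)-2}$, whereas you use the set-partition version and evaluate $\sum_k(-1)^k k!\,S(m,k)=(-1)^m$ by an exponential generating function identity; both are clean ways to obtain the same number, and your combinatorial identity avoids invoking any topology. Your remark that phantoms force trivial singleton blocks in $\FInf_X(X)$ is a fine point worth the parenthetical you gave it, since those blocks still contribute to $m(X)$.
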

\begin{proof}
The support system $\Supp_X(X)$ is always nonempty, since it contains a unique maximal element, namely the partition of $[n]$ with one block.  In particular, $\Supp_X(X)$ is a sublattice of $\Pi_n$, and its minimal element $\FInf_X(X)$ describes the finest way to represent $X$ as a join of smaller complexes, namely $m(X)$ of them.  Thus $\Supp_X(X)\isom\Pi_{m(X)}$ as lattices.  Using the set-composition version of Takeuchi's formula, the coefficient of $X$ in $\anti(X)$ is just the Euler characteristic of the triangulation of $\Ss^{m(X)-2}$ by the braid arrangement, namely $(-1)^{m(X)}$.
\end{proof}

\begin{example}
Let $X=\langle\{i\}:\ i\in[n]\rangle$ be the 0-dimensional complex with vertices $[n]$.  Then for each $\Phi\in\Pi_n$, the complex $\decomp{X}{\Phi}$ is the join of the 0-dimensional complexes on the blocks of $\Phi$.  In particular, the support systems are all singletons.  Each $\Phi$ arises from $|\Phi|!$ set compositions, so
\[\anti(A) = \sum_{\Phi\in\Pi_n} (-1)^{|\Phi|}|\Phi|! \decomp{X}{\Phi}.\]
\end{example}

\begin{example}
Let $X=\langle 123,34\rangle$.  Then the partition version of Takeuchi's formula ~\eqref{Takeuchi-grouped} gives
\[\anti(X) = X
+4\Red{\overbrace{\langle1234\rangle}^{X_1}}
-2\Blue{\overbrace{\langle123,234\rangle}^{X_2}}
-2\Green{\overbrace{\langle123,134\rangle}^{X_3}}.
\]
Figure~\ref{fig:suppsysX} shows the support systems corresponding to each simplicial complex occurring in the antipode.  As proved in Corollary~\ref{sc-support}, each support system is meet-closed and interval-closed, with a unique minimum (the fundamental inflator).

\begin{figure}[ht]
\begin{center}
\begin{tikzpicture}
\newcommand{\xsc}{2.2}
\newcommand{\ysc}{1.5}
\newcommand{\delt}{1.3}
\coordinate (top) at (0,3*\ysc);
\coordinate (c123) at (0*\xsc,2*\ysc);
\coordinate (c124) at (1*\xsc,2*\ysc);
\coordinate (c13) at (-3*\xsc,2*\ysc);
\coordinate (c234) at (-2*\xsc,2*\ysc);
\coordinate (c12) at (-1*\xsc,2*\ysc);
\coordinate (c134) at (2*\xsc,2*\ysc);
\coordinate (c23) at (3*\xsc,2*\ysc);
\coordinate (a12) at (1.5*\xsc,\ysc);
\coordinate (a13) at (-1.5*\xsc,\ysc);
\coordinate (a14) at (2.5*\xsc,\ysc);
\coordinate (a23) at (0.5*\xsc,\ysc);
\coordinate (a24) at (-2.5*\xsc,\ysc);
\coordinate (a34) at (-0.5*\xsc,\ysc);
\coordinate (bot) at (0,0);
\foreach \coatom in {c123,c124,c134,c234,c12,c13,c23} \draw (top)--(\coatom);
\foreach \coatom in {c123,c124,c12} \draw (a12)--(\coatom);
\foreach \coatom in {c123,c134,c13} \draw (a13)--(\coatom);
\foreach \coatom in {c124,c134,c23} \draw (a14)--(\coatom);
\foreach \coatom in {c123,c234,c23} \draw (a23)--(\coatom);
\foreach \coatom in {c124,c234,c13} \draw (a24)--(\coatom);
\foreach \coatom in {c134,c234,c12} \draw (a34)--(\coatom);
\foreach \atom in {a12,a13,a14,a23,a24,a34} \draw (bot)--(\atom);
\node[fill=white] at (a12) {\Red{$12|3|4$}};
\node[fill=white] at (a13) {\Red{$13|2|4$}};
\node[draw=black,thick,fill=white] at (a14) {\Blue{$14|2|3$}};
\node[fill=white] at (a23) {\Red{$23|1|4$}};
\node[draw=black,thick,fill=white] at (a24) {\Green{$24|1|3$}};
\node[fill=white] at (a34) {\Red{$34|1|2$}};
\node[fill=white] at (c123) {\Red{$123|4$}};
\node[draw=black,thick,fill=white] at (c124) {{$124|3$}};
\node[fill=white] at (c134) {\Blue{$134|2$}};
\node[fill=white] at (c234) {\Green{$234|1$}};
\node[fill=white] at (c12) {\Red{$12|34$}};
\node[fill=white] at (c13) {\Green{$13|24$}};
\node[fill=white] at (c23) {\Blue{$14|23$}};
\node[draw=black,thick,fill=white] at (bot) {\Red{$1|2|3|4$}};
\node[fill=white] at (top) {{$1234$}};

\draw[very thick,rounded corners=1cm] (-2.5*\xsc,\ysc-\delt) -- (-3*\xsc-\delt,2*\ysc+\delt/2) -- (-2*\xsc+\delt,2*\ysc+\delt/2) -- cycle;
	\node at (-3.5*\xsc,\ysc) {\Green{$X_3$}};
\draw[very thick,rounded corners=1cm] (2.5*\xsc,\ysc-\delt) -- (3*\xsc+\delt,2*\ysc+\delt/2) -- (2*\xsc-\delt,2*\ysc+\delt/2) -- cycle;
	\node at (3.5*\xsc,\ysc) {\Blue{$X_2$}};
\draw[very thick,rounded corners=.7cm] (0,-\delt/2) -- (-1.4*\xsc-\delt,\ysc) -- (-1*\xsc,2*\ysc+\delt/2) -- (0*\xsc-\delt/2,2*\ysc+.6*\delt) -- (.75*\xsc,\ysc+.75*\delt) -- (1.5*\xsc+.9*\delt,\ysc) -- cycle;
	\node at (\xsc+\delt/2,0) {\Red{$X_1$}};
\draw[very thick,rounded corners=.5cm] (0,3*\ysc+\delt/2) -- (\xsc+\delt,2*\ysc) -- (\xsc,2*\ysc-\delt/2)-- (-\delt,3*\ysc) -- cycle;
	\node at (-\delt,3*\ysc+\delt/2) {$X$};
\end{tikzpicture}
\end{center}
\caption{The lattice $\Pi_4$, stratified into support systems of the complex $X=\langle123,34\rangle$.  Fundamental inflators are \fbox{boxed.}\label{fig:suppsysX}}
\end{figure}
\end{example}

\begin{proposition} \label{prop:fun-canon}
Suppose $X$ and $Y$ are simplicial complexes, and let $\Psi = \Psi_1 | \dots |\Psi_k \in \Supp_X(Y)$ .
Then $\Psi$ is the fundamental inflator of $Y$ if and only if $Y = X_{\Psi_1}*\cdots*X_{\Psi_k}$ is the unique finest join decomposition of $Y$.
\end{proposition}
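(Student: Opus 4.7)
The plan is to identify the fundamental inflator $\FInf_X(Y)$ with the canonical join partition of $Y$, namely the finest set partition $\Lambda \partn [n]$ such that $Y = Y|_{\Lambda_1} * \cdots * Y|_{\Lambda_m}$. I would first record the cornerstone observation:
\[
\Psi \in \Supp_X(Y) \ \implies\ Y|_{\Psi_i} = X|_{\Psi_i} \quad\text{for each block } \Psi_i,
\]
which holds because restricting the join $Y = X|_{\Psi_1}*\cdots*X|_{\Psi_k}$ to the ground set of any one factor returns that factor intact. I would also note in passing that the collection of partitions with respect to which $Y$ is a join is meet-closed (by a routine check on faces), so the canonical partition $\Lambda$ exists uniquely.

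For the forward direction, assume $\Psi = \FInf_X(Y)$. Since $Y$ is a join with respect to $\Psi$, canonicality of $\Lambda$ forces $\Lambda \leq \Psi$. Each block $\Psi_i$ is therefore refined by $\Lambda$ into sub-blocks $\Lambda_{i,1},\dots,\Lambda_{i,s_i}$, and the canonical decomposition of $Y$ restricts to give $Y|_{\Psi_i} = Y|_{\Lambda_{i,1}} * \cdots * Y|_{\Lambda_{i,s_i}}$. Using the cornerstone observation only for $\Psi$, each piece can be rewritten as
\[
X|_{\Lambda_{i,j}} = (X|_{\Psi_i})|_{\Lambda_{i,j}} = (Y|_{\Psi_i})|_{\Lambda_{i,j}} = Y|_{\Lambda_{i,j}}.
\]
Substituting yields $X|_{\Psi_i} = X|_{\Lambda_{i,1}} * \cdots * X|_{\Lambda_{i,s_i}}$, and joining over $i$ gives $\decomp{X}{\Lambda} = \decomp{X}{\Psi} = Y$. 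Hence $\Lambda \in \Supp_X(Y)$, and minimality of $\Psi$ forces $\Psi \leq \Lambda$, so $\Psi = \Lambda$.

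For the reverse direction, suppose $Y = X|_{\Psi_1} * \cdots * X|_{\Psi_k}$ is the canonical join decomposition of $Y$. Any $\Psi' \in \Supp_X(Y)$ with $\Psi' < \Psi$ would present $Y$ as a join with respect to a strictly finer partition, contradicting canonicality of $\Psi$. Therefore no proper refinement of $\Psi$ lies in $\Supp_X(Y)$; combined with Corollary~\ref{sc-support}, this means $\Psi = \FInf_X(Y)$.

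The main obstacle I anticipate is keeping the three interacting partitions --- $\Psi$, the canonical partition $\Lambda$, and the refinement of $\Psi$ induced by $\Lambda$ --- clearly separated in notation. In particular, the step that looks circular (deducing $\Lambda \in \Supp_X(Y)$ while seemingly already using identities on $\Lambda$) is resolved by noticing that the cornerstone observation is only invoked for $\Psi$, with restrictions to the sub-blocks $\Lambda_{i,j}$ handled by ordinary restriction of complexes. Once this is made explicit, the remaining argument is routine partition-lattice bookkeeping.
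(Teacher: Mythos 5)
Your proof is correct and rests on the same central idea as the paper's: show that the canonical join partition $\Lambda$ of $Y$ belongs to $\Supp_X(Y)$ by exploiting $X\subseteq Y$, and then squeeze $\FInf_X(Y)=\Lambda$ between the two inequalities $\Lambda\leq\Psi$ (canonicality) and $\Psi\leq\Lambda$ (minimality). The execution differs slightly in style: the paper uses a compact three-step inclusion chain
\[
\decomp{X}{\Lambda}\supseteq\decomp{X}{\Psi}=Y=\decomp{Y}{\Lambda}\supseteq\decomp{X}{\Lambda},
\]
invoking the monotonicity of $\Phi\mapsto\decomp{X}{\Phi}$ under refinement together with $X\subseteq Y$, whereas you derive the same conclusion block by block via your ``cornerstone observation'' $Y|_{\Psi_i}=X|_{\Psi_i}$ and explicit restriction identities. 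Your version is somewhat more verbose but makes the role of the intermediate sub-blocks $\Lambda_{i,j}$ fully explicit, and it cleanly separates the forward and reverse implications (the paper proves both at once by showing the two distinguished partitions coincide). Both proofs are complete and correct.
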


\begin{proof}
Suppose $\Psi = \Psi_1|\dots|\Psi_k = \FInf_X(Y)$ and that the canonical decomposition of $Y$ is $X_{\Phi_1} * \dots * X_{\Phi_\ell}$ for $\Phi = \Phi_1|\dots|\Phi_\ell \in \Pi_n$.  Certainly $Y = X_{\Psi_1} * \dots * X_{\Psi_k}$ is a join decomposition of $Y$, so $\Psi\geq\Phi$.  On the other hand,
\[X_{\Phi_1}*\cdots*X_{\Phi_\ell}
\supseteq X_{\Psi_1}*\cdots*X_{\Psi_k} = Y
=Y_{\Phi_1}*\cdots*Y_{\Phi_\ell}
\supseteq X_{\Phi_1}*\cdots*X_{\Phi_\ell}
\]
(since $Y\supseteq X$), so equality holds throughout.  In particular $\Phi\in\Supp_X(Y)$, so $\Phi\geq\Psi$ and equality holds.
\end{proof}

The problem of calculating antipode coefficients in general appears to be intractable, for the following reason.  Consider a support system $\Supp_X(Y)$ with minimal element $\Phi=\FInf_X(Y)$ and maximal elements $\Omega_1,\dots,\Omega_k$.  For $A\subseteq[k]$, let $\Omega_A=\bigwedge_{a\in A}\Omega_a$; then by inclusion/exclusion
\begin{align*}
\sum_{\Psi\in\Supp_X(Y)} (-1)^{|\Psi|} |\Psi|!
&= \sum_{\0\neq A\subseteq[k]} (-1)^{|A|-1} \sum_{\Psi\in\bigcap_{a\in A}[\Phi,\Omega_a]} (-1)^{|\Psi|} |\Psi|!\\
&= \sum_{\0\neq A\subseteq[k]} (-1)^{|A|-1} \sum_{\Psi\in[\Phi,\Omega_A]} (-1)^{|\Psi|} |\Psi|!.
\end{align*}
Each interval $[\Phi,\Omega_A]$ is a sublattice of $\Pi_n$, hence a product of smaller partition lattices.  Specifically, if the $b$ blocks of $\Omega_A$ are respectively broken into $k_1,\dots,k_b$ blocks in $\Phi$, then $[\Phi,\Omega_A]\isom\Pi_{k_1}\x\cdots\x\Pi_{k_b}$.  In the special case $[\Phi,\Omega_A]\isom\Pi_k$, we have in fact
\begin{align*}
\sum_{\Psi\in[\Phi,\Omega_A]} (-1)^{|\Psi|} |\Psi|!
&= (-1)^{|\Phi|} \sum_{\Theta\in\Pi_k} (-1)^{k-|\Theta|} (|\Theta|+b-1)!\\
&= (-1)^{|\Phi|} \sum_{j=1}^k S(k,j) (-1)^{k-j} (j+b-1)!\\
&= (-1)^{|\Phi|} (b-1)! \sum_{j=1}^k S(k,j) (-1)^{k-j} b(b+1)(b+2)\cdots(b+j-1)\\
&= (-1)^{|\Phi|} (b-1)! b^k
\end{align*}
by \cite[p.249, identity 6.12]{GKP}.
Unfortunately, if $[\Phi,\Omega_A]$ is a nontrivial product of partition lattices, then there does not appear to be a simple formula for 
$\sum_{\Psi\in[\Phi,\Omega_A]} (-1)^{|\Psi|} |\Psi|!$.

\subsection{Join closure of simplex skeletons}
For a set of vertices $V$, the \defterm{simplex skeleton} $\sk(m,V)$ is the simplicial complex whose faces are the subsets of $V$ of cardinality at most $m$ (i.e., dimension $m-1$).  We abbreviate $\sk(m,n)=\sk(m,[n])$.  Observe that $\sk(m,n)$ is join indecomposable whenever $m<n$.  Moreover,
\begin{equation}\label{restrict-skeleton}
\sk(m,n)|_X = \sk(\min(|X|,m),X) \qquad \forall X\subseteq[n].
\end{equation}

\begin{proposition}\label{finf-skeletons}
A partition $\Phi\in\Pi_n$ is a fundamental inflator of $\sk(m,n)$ if and only if every block $\Phi_i\in\Phi$ satisfies $|\Phi_i|=1$ or $|\Phi_i| >m$.
\end{proposition}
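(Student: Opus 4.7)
The plan is to reduce the claim to Proposition~\ref{prop:fun-canon}, which characterizes fundamental inflators as those $\Phi$ for which the factorization $\decomp{\sk(m,n)}{\Phi}=\sk(m,n)|_{\Phi_1}*\cdots*\sk(m,n)|_{\Phi_k}$ is the canonical (finest) join decomposition. So the question reduces to determining when every factor $\sk(m,n)|_{\Phi_i}$ is join-indecomposable. Applying the restriction formula~\eqref{restrict-skeleton}, each factor $\sk(m,n)|_{\Phi_i}=\sk(\min(|\Phi_i|,m),\Phi_i)$ falls into one of three cases: (a) a single vertex when $|\Phi_i|=1$; (b) the full simplex on $\Phi_i$ when $1<|\Phi_i|\le m$; (c) a proper $m$-skeleton $\sk(m,\Phi_i)$ when $|\Phi_i|>m$.

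For the forward direction, suppose $\Phi$ is a fundamental inflator but some block $\Phi_i$ lies in case (b). Then $\sk(m,n)|_{\Phi_i}$ is the full simplex on $\Phi_i$, which is the join $*_{v\in\Phi_i}\langle\{v\}\rangle$ of its $|\Phi_i|>1$ vertices. Refining $\Phi_i$ into singletons produces a strictly finer partition $\Phi'<\Phi$ with $\decomp{\sk(m,n)}{\Phi'}=\decomp{\sk(m,n)}{\Phi}$, so $\Phi'\in\Supp_{\sk(m,n)}(\decomp{\sk(m,n)}{\Phi})$, contradicting minimality of $\Phi$ in its support system. Hence every block has size $1$ or size $>m$.

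For the reverse direction, assume every block satisfies $|\Phi_i|=1$ or $|\Phi_i|>m$. In case (a) the factor is a single vertex, trivially join-indecomposable. In case (c) the factor is $\sk(m,\Phi_i)$ with $|\Phi_i|>m$, which is join-indecomposable by the observation in the paragraph preceding the proposition. Consequently the decomposition $\decomp{\sk(m,n)}{\Phi}=*_i\sk(m,n)|_{\Phi_i}$ is already canonical, and Proposition~\ref{prop:fun-canon} gives $\Phi=\FInf_{\sk(m,n)}(\decomp{\sk(m,n)}{\Phi})$, as required.

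The only nontrivial ingredient is the join-indecomposability of $\sk(m,V)$ when $|V|>m$, which I expect to be the main technical obstacle; either cite the remark already made in the paper, or verify directly: if $\sk(m,V)=Y_1*Y_2$ for nonempty complexes supported on a partition $V=V_1\sqcup V_2$, then restricting both sides to $V_i$ forces $Y_i=\sk(\min(m,|V_i|),V_i)$, and rejoining produces either the full simplex on $V$ (if both $|V_i|\le m$) or a complex with faces of size $>m$ (if some $|V_i|>m$), neither of which equals $\sk(m,V)$ when $m<|V|$.
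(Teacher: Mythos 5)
Your proposal is correct and follows essentially the same route the paper takes: reduce via Proposition~\ref{prop:fun-canon} to the question of when each factor $\sk(m,n)|_{\Phi_i}$ is join-indecomposable, apply~\eqref{restrict-skeleton}, and use the observation (stated just before the proposition) that $\sk(m,V)$ is join-indecomposable for $m<|V|$. You've merely spelled out in detail the case analysis and the indecomposability check that the paper's one-line proof compresses.
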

\begin{proof}
By~\eqref{restrict-skeleton}, this condition is equivalent to saying that every subcomplex $\sk(m,n)|_{\Phi_i}$ is join-irreducible.
\end{proof}

Let $p_{a,b}(c)$ denote the number of partitions of a $c$-element set into $b$ blocks, each of cardinality at most $a$.
Moreover, for a set partition $\Phi$, let $s(\Phi)$ and $t(\Phi)$ denote respectively the number of singleton and non-singleton blocks of $\Phi$.  
  
\begin{theorem} \label{thm:simplex-skeleton}
Let $X = \sk(m,n)$.  Then
\[
\anti(X) = \sum_{\Phi \in \Fund(X)} \left(\sum_{j=0}^{s(\Phi)} (-1)^{t(\Phi)+j} p_{m,j}(s(\Phi))  (t(\Phi)+j)!\right) \decomp{X}{\Phi}.
\]
\end{theorem}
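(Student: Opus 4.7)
The plan is to apply the grouped Takeuchi formula~\eqref{Takeuchi-grouped} and compute, for each fundamental inflator $\Phi \in \Fund(X)$, the coefficient $c_\Phi$ by enumerating the support system $\Supp_X(\decomp{X}{\Phi})$ explicitly. By Proposition~\ref{finf-skeletons} every block of $\Phi$ is either a singleton or has size strictly greater than $m$, and by Corollary~\ref{sc-support} every $\Psi \in \Supp_X(\decomp{X}{\Phi})$ is a coarsening of $\Phi$. So the main combinatorial task is to determine exactly which coarsenings $\Psi\geq\Phi$ preserve the inflation.

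The key step is a three-case analysis by inspection of facets. First, if $\Psi$ is obtained from $\Phi$ by merging any block $\Phi_i$ of size greater than $m$ with any other block $\Phi_j$, then $\decomp{X}{\Psi} \subsetneq \decomp{X}{\Phi}$: the join $\sk(m,\Phi_i)*X|_{\Phi_j}$ contains unions $\sigma\cup\tau$ of an $m$-subset $\sigma$ of $\Phi_i$ with a nonempty face $\tau$ of $X|_{\Phi_j}$, and such faces have cardinality strictly greater than $m$, hence lie outside $\sk(m,\Phi_i\cup\Phi_j)$. Second, merging a collection of singleton blocks $\{v_1\},\dots,\{v_s\}$ of $\Phi$ into a single block of size $s\leq m$ preserves the inflation, since both $\sk(m,\{v_1,\dots,v_s\})$ and the join of the singleton-complexes $\{\emptyset,\{v_i\}\}$ equal the full simplex on $\{v_1,\dots,v_s\}$. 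Third, merging singletons into a block of size greater than $m$ again strictly shrinks the inflation, by the same facet-comparison as in the first case.

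Combining these three observations identifies $\Supp_X(\decomp{X}{\Phi})$ with the set of partitions obtained by leaving the $t(\Phi)$ non-singleton blocks of $\Phi$ intact while further partitioning the $s(\Phi)$ singletons into groups of size at most $m$. Such a $\Psi$ with exactly $j$ singleton-groups has $|\Psi|=t(\Phi)+j$ blocks, and the number of such $\Psi$ is $p_{m,j}(s(\Phi))$ by definition. Summing the contributions $(-1)^{|\Psi|}|\Psi|!$ over all admissible $\Psi$ and substituting into~\eqref{Takeuchi-grouped} gives the stated antipode formula.

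The main technical obstacle lies in the careful execution of the three case analyses, and in particular a precise comparison of the join of two restricted skeleta with the restriction of the skeleton to the union of their ground sets. (One must also check that a general coarsening, which may simultaneously merge several groups of blocks, reduces to the three elementary cases by induction on the number of merges.) Once that comparison is pinned down, the counting step is routine from the definitions of $p_{m,j}$, $s(\Phi)$, and $t(\Phi)$.
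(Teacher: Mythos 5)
Your proof takes essentially the same route as the paper's: group Takeuchi's formula by fundamental inflator, characterize $\Supp_X(\decomp{X}{\Phi})$ as the coarsenings of $\Phi$ that merge only singletons into groups of size at most $m$, and count those coarsenings by the number $j$ of merged groups. Your three-case analysis is the correct (and more explicit) argument behind the paper's one-line invocation of Proposition~\ref{finf-skeletons}, and the facet-size comparison you sketch is exactly what one needs to rule out any merge touching a block of size $>m$.

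One discrepancy worth flagging: you write the count of coarsenings with $j$ merged groups as $p_{m,j}(s(\Phi))$, whereas the theorem statement (and the paper's proof) have $p_{m,j}(n-s(\Phi))$. Your version is actually the right one. Since $p_{a,b}(c)$ counts partitions of a $c$-element set and there are $s(\Phi)$ elements sitting in singleton blocks of $\Phi$, the object being partitioned has $s(\Phi)$ elements, not $n-s(\Phi)$. A quick sanity check with $X=\sk(1,3)$ and $\Phi=1|2|3$ confirms it: here $s(\Phi)=3$ and the coefficient of $\decomp{X}{\Phi}$ must be $(-1)^3\,3!=-6$, which your formula yields, while $p_{1,j}(n-s(\Phi))=p_{1,j}(0)$ gives $1$. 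So the published statement appears to contain a typo, and your proof establishes the corrected version.
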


\begin{proof}
Let $\Phi\in\Fund(X)$ and $Y=\decomp{X}{\Phi}$.  As a consequence of Proposition~\ref{finf-skeletons}, the partitions $\Theta\in\Supp_X(Y)$ are precisely those obtained by merging singleton blocks of $\Phi$; however, no more than $m$ singletons of $\Phi$ can be merged to form a block of $\Theta$ without changing $\decomp{X}{\Theta}$.  The number of such mergers $\Theta$ in which exactly $j$ blocks of $\Theta$ are unions of singletons of $\Phi$ is $p_{m,j}(s(\Phi))$.  Each such partition has $t(\Phi)+j$ blocks, hence gives rise to $(t(\Phi)+j)!$ set compositions.  The formula now follows from the set-partition version~\eqref{Takeuchi-grouped} of Takeuchi's formula.
\end{proof}

\begin{remark}
Let $I=I_1\sqcup\cdots\sqcup I_k$. The corresponding \defterm{complete colorful complex} is
\[X=X(I_1,\dots,I_k)=\{\sigma\subseteq I:\ |\sigma\cap I_j|\leq 1\ \forall j\}.\]
We regard each set $I_j$ as consisting of vertices of ``color'' $j$; the faces of~$X$ are then the sets of vertices with no more than one of any color.  In particular $\FInf_X(X)=I_1|\cdots|I_k$, and the other fundamental inflators are precisely the further refinements of this partition, i.e., the partitions in which every block is monochromatic.  To get the bottom element of the support system containing a given partition $\Theta$, break up each block of $\Theta$ into its maximal monochromatic subsets.  An example with $I_1=\{1,2\}$, $I_2=\{3,4\}$ is shown in Figure~\ref{fig:ccc}.  Meanwhile, a partition $\Phi$ is maximal in its support system if and only if for every pair of blocks, there is some color represented in both blocks (so that any further coarsening will add faces to $\decomp{X}{\Phi}$).  On the other hand, it is not clear how to systematically compute the coefficient of the antipode corresponding to a support system, or how to effectively list the maximal elements of a support system (in order to carry out inclusion/exclusion).
\end{remark}

\begin{figure}[th]
\begin{center}
\begin{tikzpicture}
\newcommand{\xsc}{2.2}
\newcommand{\ysc}{1.5}
\newcommand{\delt}{1.3}
\coordinate (top) at (0,3*\ysc);
\coordinate (c12) at (0*\xsc,2*\ysc);
\coordinate (c124) at (3*\xsc,2*\ysc);
\coordinate (c13) at (-1*\xsc,2*\ysc);
\coordinate (c234) at (-2*\xsc,2*\ysc);
\coordinate (c123) at (2*\xsc,2*\ysc);
\coordinate (c134) at (-3*\xsc,2*\ysc);
\coordinate (c23) at (1*\xsc,2*\ysc);
\coordinate (a12) at (2.5*\xsc,\ysc);
\coordinate (a13) at (-1.5*\xsc,\ysc);
\coordinate (a14) at (0.5*\xsc,\ysc);
\coordinate (a23) at (1.5*\xsc,\ysc);
\coordinate (a24) at (-0.5*\xsc,\ysc);
\coordinate (a34) at (-2.5*\xsc,\ysc);
\coordinate (bot) at (0,0);
\foreach \coatom in {c123,c124,c134,c234,c12,c13,c23} \draw (top)--(\coatom);
\foreach \coatom in {c123,c124,c12} \draw (a12)--(\coatom);
\foreach \coatom in {c123,c134,c13} \draw (a13)--(\coatom);
\foreach \coatom in {c124,c134,c23} \draw (a14)--(\coatom);
\foreach \coatom in {c123,c234,c23} \draw (a23)--(\coatom);
\foreach \coatom in {c124,c234,c13} \draw (a24)--(\coatom);
\foreach \coatom in {c134,c234,c12} \draw (a34)--(\coatom);
\foreach \atom in {a12,a13,a14,a23,a24,a34} \draw (bot)--(\atom);

\node[draw=black, thick, fill=white] at (c12) {{$12|34$}};
\node[fill=white] at (top) {{$1234$}};

\node[draw=black, thick, fill=white] at (a12) {\Red{$12|3|4$}};
\node[fill=white] at (c123) {\Red{$123|4$}};
\node[fill=white] at (c124) {\Red{$124|3$}};

\node[draw=black, thick, fill=white] at (a34) {\Green{$34|1|2$}};
\node[fill=white] at (c134) {\Green{$134|2$}};
\node[fill=white] at (c234) {\Green{$234|1$}};

\node[draw=black, thick, fill=white] at (bot) {\Blue{$1|2|3|4$}};

\node[fill=white] at (a13) {\Blue{$13|2|4$}};
\node[fill=white] at (a14) {\Blue{$14|2|3$}};
\node[fill=white] at (a23) {\Blue{$23|1|4$}};
\node[fill=white] at (a24) {\Blue{$24|1|3$}};
\node[fill=white] at (c13) {\Blue{$13|24$}};
\node[fill=white] at (c23) {\Blue{$14|23$}};

\end{tikzpicture}
\caption{Support systems in a complete colorful complex.\label{fig:ccc}}
\end{center}
\end{figure}
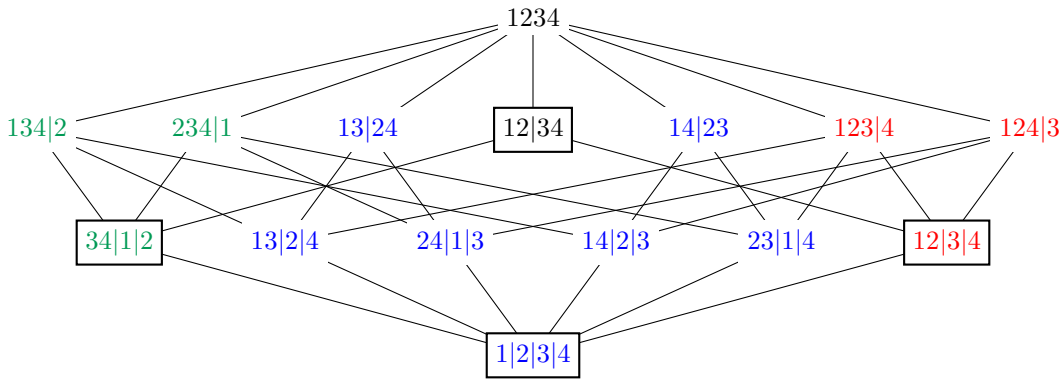

\section{Chain forests: symmetric functions and characters} \label{sec:chainforest}

A \defterm{chain forest} is a poset $P$ that is the disjoint union of chains.  Every chain forest can be specified up to isomorphism by a (possibly empty) integer partition $\lambda$ whose parts are the sizes of its maximal chains.  The lattices of order ideals of chain forests (with possible phantoms) generate a subspecies $\CF\subset\LOI$:
\[\CF[I] = \Bbbk\{J(P)\colon P\text{ is a chain forest with ground set } J\subseteq I\}.\]
In fact $\CF$ is a Hopf submonoid of $\LOI$, since chain forests with phantoms are closed under disjoint unions and induced subposets.  A chain forest with all chains of size~1 is just an antichain, so $\CF$ contains the Hopf monoid $\BOOL$ generated by Boolean lattices with possible phantoms.

Let $\pha$ be the trivial set family with one phantom, and let $\chain{n}$ be a complete flag on an $n$-element set, with no phantoms; in particular, $\chain{0}$ is the multiplicative unit.
Let $\cg{\lambda}{p}$ denote the equivalence class of lattices of order ideals of chain forests with chain sizes $\lambda$ and $p$ phantoms.  
These equivalence classes span the Hopf algebra $\CFA$ obtained from $\CF$ by applying the Fock functor $\fock$; specifically, the $k$th graded piece of $\CFA$ has basis $\{\cg{\lambda}{p}\colon |\lambda|+p=k\}$.  (For details on $\fock$, see \cite[\S1.1.10]{AA} and \cite{AguiarMahajan};  for Hopf algebras in a combinatorial context, see, e.g., \cite{BenSag} or \cite{ABS}.)  The Hopf algebra $\CFA$ will be the focus of this section.

\subsection{The structure of \texorpdfstring{$\CFA$}{the chain forest Hopf algebra}}

As a graded algebra, $\CFA$ is the free polynomial algebra generated by the phantom $\pha$ and the chains $\chain{n}$ for $n\geq1$.  That is, the product is given by
\begin{equation}\label{eq:cg-alg-prod}
(\cg{\lambda}{p})(\cg{\mu}{q})=\cg{\lambda\cup\mu}{p+q}
\end{equation}
where $\lambda\cup\mu$ is the multiset union of $\lambda$ and $\mu$, sorted in decreasing order.

The coproduct $\Delta$ is a morphism of algebras, i.e.,
\[\Delta(\cg{(\lambda_1,\dots,\lambda_\ell)}{p})=\Delta(\chain{\lambda_1})\cdots\Delta(\chain{\lambda_\ell})\Delta(\pha)^p,\]
so it suffices to compute
\begin{align}
\Delta(\pha) &= 1\otimes\pha + \pha\otimes 1, \label{eq:cg-alg-cprod-phantom} \\
\Delta(\chain{n})
&= \sum_{I \subseteq [n]} [\chain{n} |_I]\otimes [\chain{n}/_I]\notag\\
&= 1\otimes\chain{n} + \sum_{\0\neq I\subseteq[n]} \chain{|I|} \otimes \cg{\min(I)-1}{n-|I|-\min(I)+1}\notag\\
&= 1\otimes\chain{n} + \sum_{m=1}^n \sum_{J\subseteq[m+1,n]} \chain{|J|+1} \otimes \cg{m-1}{n-m-|J|}\notag\\
\intertext{(where $m=\min(I)$ and $J=I\sm\{m\}$)}
&= 1\otimes\chain{n} + \sum_{m=1}^n \sum_{j=0}^{n-m} \binom{n-m}{j} \chain{j+1} \otimes \cg{m-1}{n-m-j}.\label{eq:cg-alg-cprod-chain}
\end{align}
In particular, the expression~\eqref{eq:cg-alg-cprod-chain} is phantom-free if and only if $n\leq2$.

Our main structural result on $\CFA$ is that the Hopf algebra of symmetric functions arises as a quotient.  We start by recalling some of the basic theory of symmetric functions; a standard reference is Chapter~7 of \cite{ec-2}.
Let $h_n$ and $e_n$ denote respectively the complete homogeneous and elementary symmetric functions of degree~$n$ in commuting indeterminates $x_1,x_2,\dots$ over a field $\Bbbk$.  The algebra of symmetric functions $\Lambda=\Lambda_\Bbbk$ is the free polynomial algebra $\Bbbk[h_1,h_2,\dots]=\Bbbk[e_1,e_2,\dots]$.  In fact, it is a Hopf algebra \cite[Chapter~2]{GR}, with coproduct given in these bases by
\begin{equation} \label{Lambda-coproduct}
\Delta(h_n) = \sum_{i+j=n} h_i \otimes h_j, \qquad \Delta(e_n) = \sum_{i+j=n} e_i \otimes e_j
\end{equation}
and antipode
\[\anti(h_n)=(-1)^n e_n.\]
The generating functions
\[
H(t) = \sum_{k\geq0} h_k t^k = \prod_{i\geq 1} \frac{1}{1-tx_i},\qquad
E(t) = \sum_{k\geq0} e_k t^k = \prod_{i\geq 1} (1+tx_i).
\]
satisfy $H(t)E(-t)=1$.  This power series equation can be used to solve for the $h_n$ and $e_n$ in terms of each other; the resulting determinant formula is a special case of the Jacobi-Trudi determinant formula for Schur functions.

Recall \cite[chapter~1]{montgomery} that a \emph{Hopf ideal} $\calI$ of a Hopf algebra $\calH$ is a $\Bbbk$-vector subspace $\calI$ that is (a) an ideal ($\calH\calI\subseteq \calI$), (b) a coideal ($\Delta(\calI)\subseteq \calI\otimes \calH + \calH\otimes \calI$), and (c) closed under the antipode.  (The third condition follows from the first two in a graded connected Hopf algebra, where Takeuchi's formula holds.)  The quotient $\calH/\calI$ is then a vector space that inherits a Hopf algebra structure from $\calH$.

\begin{theorem}\label{thm:punchline}
Let $\calI$ be the vector space spanned by all elements $\cg{\lambda}{p}$, where $\lambda$ is a partition and $p > 0$.  Then $\calI$ is a Hopf ideal and the map $f:\Lambda\to\CFA/\calI$ sending $h_\lambda$ to $\overline{\chain{\lambda}}$ (i.e. the image of $\chain{\lambda}\in\CFA$ modulo~$\calI$) is an isomorphism of Hopf algebras.
\end{theorem}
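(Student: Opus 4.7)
The plan is to check in sequence that (i) $\calI$ is a two-sided ideal, (ii) $\calI$ is a coideal, (iii) $\CGA/\calI$ is isomorphic to $\Lambda$ as a graded algebra via $f$, and (iv) $f$ respects the coproduct. Closure of $\calI$ under the antipode and antipode-preservation of $f$ then follow automatically, since $\CGA$ and $\Lambda$ are graded connected Hopf algebras. Step (i) is immediate from~\eqref{eq:cg-alg-prod}, which exhibits $\calI$ as exactly the principal ideal of $\CGA = \Bbbk[\pha, \chain{1}, \chain{2}, \ldots]$ generated by $\pha$. For (ii), the key identity is
\[
\Delta(\pha)^p = \sum_{k=0}^p \binom{p}{k} \pha^k \otimes \pha^{p-k},
\]
obtained from~\eqref{eq:cg-alg-cprod-phantom}; when $p \geq 1$, every summand has a $\pha$-factor on at least one tensor slot. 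Since $\Delta$ is an algebra map and $\calI$ is already an ideal, multiplying by $\Delta(\chain{\lambda_1}) \cdots \Delta(\chain{\lambda_\ell})$ preserves that property term by term, so $\Delta(\cg{\lambda}{p}) \in \calI \otimes \CGA + \CGA \otimes \calI$ whenever $p \geq 1$.

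For step (iii), quotienting the free polynomial algebra $\Bbbk[\pha, \chain{1}, \chain{2}, \ldots]$ by the principal ideal $(\pha)$ yields the free polynomial algebra $\Bbbk[\overline{\chain{1}}, \overline{\chain{2}}, \ldots]$, which is abstractly isomorphic to $\Lambda = \Bbbk[h_1, h_2, \ldots]$ via $h_n \mapsto \overline{\chain{n}}$. Step (iv) is the real content of the theorem and where I expect most of the work to live. The strategy is to take the explicit formula~\eqref{eq:cg-alg-cprod-chain} and project into $\CGA/\calI \otimes \CGA/\calI$: by the phantom-counting already used in step (ii), the term $\chain{j+1} \otimes \cg{m-1}{n-m-j}$ survives only when $n - m - j = 0$. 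Collecting this diagonal slice together with the $1 \otimes \chain{n}$ contribution and reindexing by the chain sizes on each side should produce exactly
\[
\overline{\Delta(\chain{n})} = \sum_{i+j=n} \overline{\chain{i}} \otimes \overline{\chain{j}},
\]
matching $\Delta(h_n) = \sum_{i+j=n} h_i \otimes h_j$ from~\eqref{Lambda-coproduct}. Because $\Lambda$ and $\CGA/\calI$ are both algebra-generated by the images of the $h_n$ and $f$ is an algebra map, this check on generators is enough to conclude that $f$ is a coalgebra morphism.

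The main obstacle is really just the indexing bookkeeping in step (iv); there is no conceptual obstruction once one realizes that the phantom-counting from step (ii) singles out the diagonal terms $n - m - j = 0$ as the only survivors in the quotient. Given (i)--(iv), $\calI$ is a Hopf ideal and $f$ is a bialgebra isomorphism between graded connected Hopf algebras, hence automatically a Hopf algebra isomorphism.
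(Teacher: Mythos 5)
Your proposal is correct and follows essentially the same route as the paper: identify $\calI$ as the ideal generated by $\pha$, verify it is a coideal via $\Delta(\pha) = 1\otimes\pha + \pha\otimes 1$, build $f$ as an algebra isomorphism on generators, and check the coproduct agrees by setting $\pha = 0$ in~\eqref{eq:cg-alg-cprod-chain} to isolate the $j = n-m$ terms. Your steps (ii) and (iii) spell out slightly more detail than the paper (the binomial expansion of $\Delta(\pha)^p$ and the explicit principal-ideal description), but the argument and its key observations are the same.
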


\begin{proof}
Evidently $\calI$ is an ideal of $\CFA$, and it is a coideal by~\eqref{eq:cg-alg-cprod-phantom}, hence a Hopf ideal. 
Thus the $n$th graded piece of $\CFA/\calI$ is the vector space generated by $\{\overline{\chain{\lambda}}\colon\lambda\partn n\}$.  Consider the vector space isomorphism $f:\Lambda \to \CFA/\calI$ defined by $f(h_\lambda) = \overline{\chain{\lambda}}$.  It is in fact a ring isomorphism because $f(h_\lambda h_\mu) = f(h_{\lambda\cup\mu}) = \chain{\lambda}\cup\chain{\mu}=\chain{\lambda}\chain{\mu}=f(h_\lambda)f(h_\mu)$.  Meanwhile, the coproduct in $\CFA/\calI$ is obtained by setting $\pha=0$ in~\eqref{eq:cg-alg-cprod-chain}, or equivalently extracting the terms with $j=m-n$, namely
\[
\Delta(\chain{n}) = 1\otimes\chain{n} + \sum_{m=1}^n \chain{n-m+1} \otimes \chain{m-1} = \sum_{k=0}^n \chain{k}\otimes\chain{n-k}
\]
which corresponds to the coproduct formula~\eqref{Lambda-coproduct} in~$\Lambda$.  Thus $f$ is an isomorphism of graded connected bialgebras, hence of Hopf algebras.
\end{proof}

Of course, $h_\lambda$ could be replaced with $e_\lambda$ throughout Theorem~\ref{thm:punchline}.  While it is tempting to identify $\Lambda$ with the ``phantom-free'' vector subspace $\CFA_0\subseteq\CFA$ spanned by the elements $\chain{\lambda}=\cg{\lambda}{0}$, this identification is an isomorphism only of algebras, not of coalgebras.  Indeed, $\CFA_0$ is not itself a coalgebra: by the remark after equation~\eqref{eq:cg-alg-cprod-chain}, if $\lambda$ has a part of size~2 or greater, then $\Delta(\chain{\lambda})\notin\CFA_0$.

\subsection{Characters on \texorpdfstring{$\CFA$}{the chain forest Hopf algebra}}

We briefly review the definitions of characters on Hopf monoids and Hopf algebras;
see~\cite[\S2.1]{AA}.  Let $\mathbf{H}$ be a connected Hopf monoid in vector species over a field $\Bbbk$ of characteristic~0, typically~$\Cc$.  A \defterm{character} $\zeta$ on $\mathbf{H}$ is a collection of linear maps $\zeta_I: \mathbf{H}[I] \to \Bbbk$ satisfying the following conditions:
\begin{enumerate}
    \item \textit{Naturality}: For each bijection $\sigma:I\to J$ and $x \in \mathbf{H}[I]$, we have $\zeta_J(\mathbf{H}[\sigma](x)) = \zeta_I(x)$.
    \item \textit{Multiplicativity}: For each $I = S \sqcup T$, $x \in\mathbf{H}[S]$, and $y \in\mathbf{H}[T]$, we have $\zeta_I(x\cdot y) = \zeta_S(x)\zeta_T(y)$.
    \item \textit{Unitality}: $\zeta_\0(1) = 1$.
\end{enumerate}
The characters on $\mathbf{H}$ form a group $\Xx(\mathbf{H})$ under the operation of \defterm{convolution}, defined by
\begin{equation}\label{eq:convolve-characters}
    (\chi*\phi)_I(x) = \sum_{I = S\sqcup T} \chi_S(x|_S)\phi_T(x/_S).
\end{equation}
The identity in $\Xx(\mathbf{H})$ is the \defterm{counit} $\epsilon$, which is the identity on $\mathbf{H}[\0]\isom\Bbbk$ and the zero map on $\mathbf{H}[I]$ for $I\neq\0$.  The inverse is given by the antipode map: $\chi^{-1}=\chi\circ\anti$.

By naturality, a character on a Hopf monoid $\mathbf{H}$ is essentially the same thing as a character on the Hopf algebra $\fock(\mathbf{H})$, and their character groups are isomorphic.   Accordingly, we will study characters on $\CFA$ rather than on $\CF$.

By multiplicativity, each character is determined by its values on the elements $\pha,\chain{1},\chain{2},\dots$, since they generate $\CFA$ as a free polynomial algebra.  For scalars $a,t_1,t_2,\ldots\in\Bbbk$, the character $\zeta_{a,\ttt}$ is defined by $\zeta_{a,\ttt}(\pha)=a$ and $\zeta_{a,\ttt}(\chain{n})=t_n$ for each $n$.  We start by computing the convolution of characters $\zeta_{a,\ttt}$ and $\zeta_{b,\sss}$.   For convenience, set $s_0=t_0=1$.
Applying the definition of convolution to~\eqref{eq:cg-alg-cprod-phantom} and~\eqref{eq:cg-alg-cprod-chain} gives
\begin{align}
(\zeta_{a,\ttt}*\zeta_{b,\sss})(\pha) &= a+b, \label{eq:char-convolve-phantom}\\
(\zeta_{a,\ttt}*\zeta_{b,\sss})(\chain{n}) &= s_n + \sum_{m=1}^n \sum_{j=0}^{n-m} \binom{n-m}{j} t_{j+1} s_{m-1} b^{n-m-j}. \label{eq:char-convolve}
\end{align}

In particular, the set
\[\Xx_0(\CFA)=\{\zeta_{a,\ttt}\in\Xx(\CFA)\colon a=0\}\]
is closed under convolution.  Moreover, it is closed under inversion, as can be seen either from the convolution formula, or by observing that $\anti(\pha)=-\pha$ by Prop.~\ref{prop:exorcism}, so that
\[\zeta_{a,\ttt}^{-1}(\pha) = \zeta_{a,\ttt}(S(\pha)) = -\zeta_{a,\ttt}(\pha) = -a.\]
It follows that $\Xx_0(\CFA)$ is a subgroup of $\Xx(\CFA)$; we call it the \defterm{exorcism group}.

\begin{theorem}\label{thm:power-series-subgroup}
The exorcism group is isomorphic to the group of formal power series of the form $1+\sum_{n=1}^\infty t_n x^n$, under multiplication.
\end{theorem}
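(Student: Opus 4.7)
The plan is to construct an explicit isomorphism $\Psi\colon\Xx_0(\CGA)\to 1+x\Bbbk[[x]]$ (the group of units of $\Bbbk[[x]]$ of the stated form under multiplication) and verify it is a group homomorphism via the convolution formula~\eqref{eq:char-convolve}.

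Define $\Psi(\zeta_{0,\ttt})=T(x):=1+\sum_{n\geq 1} t_n x^n$, adopting the convention $t_0=1$ as in the paper. Since a character in $\Xx_0(\CGA)$ is determined by multiplicativity from its values on the polynomial generators $\pha,\chain{1},\chain{2},\dots$, and since the restriction to $\Xx_0$ fixes the value on $\pha$ to be $0$, the data $(t_1,t_2,\dots)$ can be chosen freely. Conversely, any sequence $(t_1,t_2,\dots)$ defines a character (multiplicativity on a polynomial algebra is automatic), so $\Psi$ is a bijection.

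The main computation is showing $\Psi$ respects the group operation, i.e.\ that $\zeta_{0,\ttt}*\zeta_{0,\sss}$ corresponds to $T(x)S(x)$. Specializing \eqref{eq:char-convolve} to $b=0$, the factor $b^{n-m-j}$ kills every term except the one with $j=n-m$, leaving
\[
(\zeta_{0,\ttt}*\zeta_{0,\sss})(\chain{n}) = s_n + \sum_{m=1}^{n} t_{n-m+1}\,s_{m-1} = \sum_{k=0}^{n} t_{n-k}\,s_k,
\]
after reindexing $k=m-1$ and using $t_0=s_0=1$. The right-hand side is precisely $[x^n]\,T(x)S(x)$, so $\Psi(\zeta_{0,\ttt}*\zeta_{0,\sss})=T(x)S(x)=\Psi(\zeta_{0,\ttt})\Psi(\zeta_{0,\sss})$.

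There is no serious obstacle here: the work was done by the paper's setup, in which equation \eqref{eq:char-convolve} is already in hand and the group structure on $\Xx_0(\CGA)$ is established (including closure under inversion via $\anti(\pha)=-\pha$). The only care required is the bookkeeping in the convolution sum; setting $b=0$ collapses the double sum into a single convolution, and choosing the right reindexing makes the Cauchy product manifest. The antipode on $\CGA$ corresponds under $\Psi$ to reciprocation $T(x)\mapsto 1/T(x)$, recovering the remark made in the introduction.
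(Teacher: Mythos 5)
Your proof is correct and follows essentially the same approach as the paper's: both specialize the convolution formula \eqref{eq:char-convolve} to $b=0$, observe that the resulting sum is the Cauchy product $r_n=\sum_{k=0}^n s_k t_{n-k}$, and conclude that the map $\zeta_{0,\ttt}\mapsto T(x)$ is a group isomorphism. The only difference is that you spell out bijectivity (via the freeness of the polynomial generators of $\CGA$) slightly more explicitly than the paper does, which is a reasonable addition but not a new idea.
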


\begin{proof}
When $a=b=0$, the convolution formula~\eqref{eq:char-convolve} reduces to $r_n= \sum_{m=0}^n s_{m}t_{n-m}$,
where as before $s_0 = t_0 = 1$.  On the other hand, this is also the formula for the coefficient of $x^n$ in the product of power series $\left(\sum_{n\geq0} s_n x^n\right)\left(\sum_{n\geq0} t_n x^n\right)$.  It follows that the function on $\Xx_0(\CFA)$ mapping $\zeta_{0,\ttt}$ to $\sum_{n\geq0} t_nx^n$ is a group isomorphism.
\end{proof}

This result is analogous to the fact that the character group of the Hopf monoid of permutahedra is isomorphic to the multiplicative group of \textit{exponential} formal power series  \cite[Thm.~2.2.2]{AA}.  In particular, the antipode formula for chains~\eqref{eq:antipode-for-chains} becomes a standard formula for inverting power series, as we now explain.  For $\zeta_{0,\ttt}\in\Xx_0(\CFA)$, we have by~\eqref{eq:antipode-for-chains}
\begin{align*}
\zeta^{-1}_{0,\ttt}(\chain{n}) = \zeta_{0,\ttt}(\anti(\chain{n}))
&= \sum_{\substack{V\subseteq[n]:\\ 1\in V}} (-1)^{n-|V|} \sum_{\substack{\Psi=\Psi_1|\cdots|\Psi_u\compn V\\ \text{natural}}} (-1)^u \zeta_{0,\ttt}(\chain{\lambda(\Psi)})\\
&= \sum_{\substack{\Psi=\Psi_1|\cdots|\Psi_u\compn [n]\\ \text{natural}}} (-1)^u \prod_{i=1}^u t_{|\Psi_i|}\notag\\
&= \sum_{\substack{(\alpha_1,\alpha_2,\dots,\alpha_k)\\ \alpha_i > 0\\ \sum_{\alpha_j} = n}} (-1)^k \prod_{i=1}^k t_{\alpha_i}.
\end{align*}
By Theorem~\ref{thm:power-series-subgroup}, this is the coefficient $s_n$ in the power series $1+\sum_{n=1}^\infty s_n x^n=\left(1+\sum_{n=1}^\infty t_n x^n\right)^{-1}$.
This formula can also be obtained by clearing denominators and solving for $s_n$ as a polynomial in the $t_n$, or by expanding $\left(1+\sum_{n=1}^\infty t_n x^n\right)^{-1}$ as a geometric series.

We conclude with a remark on the characters $\gamma_{u,r}=\zeta_{u,(r,r^2,r^3,\dots)}$, where $u,r\in\Rr$.  Convolution products of these ``geometric-series'' characters are given by the formulas
\begin{align}
(\gamma_{u,r}*\gamma_{v,q})(\pha)			&= u+v, \label{eq:mult-geom-phantom} \\
(\gamma_{u,r}*\gamma_{v,q})(\chain{n})		&= q^n + r\left(\frac{(r+v)^n-q^n}{r+v-q}\right) \notag \\
									&= H_n(r+v,q)-vH_{n-1}(r+v,q) \label{eq:mult-geom}
\end{align}
where $H_n(r,q) = \sum_{k=0}^n r^k q^{n-k}$.  Here~\eqref{eq:mult-geom-phantom} is immediate from~\eqref{eq:char-convolve-phantom}, and~\eqref{eq:mult-geom} follows by routine calculation from~\eqref{eq:char-convolve}.  Geometric-series characters do not form a subgroup of $\Xx(\CFA)$ (although those of the form $\gamma_{u,u}$ do form a group isomorphic to $\Cc$).  On the other hand, \eqref{eq:mult-geom-phantom} and~\eqref{eq:mult-geom} imply the curious identity
\[
\gamma_{1,r}*\gamma_{1,q} = \gamma_{1,q-1}*\gamma_{1,r+1}.
\]

\section{Open Questions} \label{ch:open-questions}

We conclude with some potential problems for future research.
\begin{enumerate}
\item What does the existence of the cancellation-free antipode formula for $\LOI$ say about the structure of posets?
\item (Suggested by Jos\'e Samper) Posets are in bijection not just with lattice of order ideals, but also with full-dimensional convex subfans of the braid arrangement, per the cone-preposet dictionary of \cite{GP}.  So $\LOI$ can be regarded as a Hopf monoid on such subfans.  Is there further interplay between the structure of $\LOI$ and subfan geometry?
\item Can any further progress be made on finding a cancellation-free antipode formula for the Hopf monoid of simplicial complexes, or of any of the other submonoids of $\SFam$ shown in Figure~\ref{fig:hierarchy}?
\item What else can be said about the Hopf algebra $\CFA$, regarded as a deformation of the Hopf algebra of symmetric functions via Theorem~\ref{thm:punchline}?
\end{enumerate}

\section*{Acknowledgements}

The authors thank Margaret Bayer, Bryan Gillespie, John Machacek, Alex McDonough, Jos\'e Samper,  Jacob White, and two anonymous referees for helpful discussions and suggestions.

\end{document}